\numberwithin{equation}{section}
\newtheorem{thm}{Theorem}[section]
\newtheorem{prop}[thm]{Proposition}
\newtheorem{lem}[thm]{Lemma}
\theoremstyle{definition}
\newtheorem{defn}[thm]{Definition}
\newtheorem{rem}[thm]{Remark}
\newcommand{\N}{\mathbb{N}}
\newcommand{\Z}{\mathbb{Z}}
\newcommand{\R}{\mathbb{R}}
\newcommand{\Rb}{\overline{\R}}
\newcommand{\cl}[1]{\overline{#1}}
\newcommand{\esssup}{\operatornamewithlimits{\mathrm{ess\,sup}}}
\newcommand{\essinf}{\operatornamewithlimits{\mathrm{ess\,inf}}}
\newcommand{\degc}{\mathrm{ind}}
\newcommand{\mylabel}[2]{#2\def\@currentlabel{#2}\label{#1}}
\newcommand{\ABi}{$(h_1)$}
\newcommand{\ABii}{$(h_2)$}
\newcommand{\ABiii}{$(h_3)$}
\newcommand{\uc}{$(UC)$}
\newcommand{\um}{$(UM)$}
\newcommand{\hci}{$(C)$}
\newcommand{\uni}{$(UN)$}
\newcommand{\hni}{$(N)$}
\newcommand{\mc}{$(MC)$}
\newcommand{\hnii}{$(M)$}
\newcommand{\hniii}{$(B)$}
\renewcommand{\rho}{\varrho}
\renewcommand{\theta}{\vartheta}
\newcommand{\au}[1]{\textsc{#1}}
\newcommand{\titleart}[1]{\textrm{#1}}
\newcommand{\jour}[1]{\textit{#1}}
\newcommand{\volart}[1]{\textbf{#1}}
\newcommand{\no}[1]{\textit{no.}~{#1}}
\begin{document}


\title[Nontrivial solutions of quasilinear elliptic 
equations]{Nontrivial
solutions of quasilinear elliptic \\
equations with natural growth term}

\author{Marco Degiovanni}
\address{Dipartimento di Matematica e Fisica\\
         Universit\`a Cattolica del Sacro Cuore\\
         Via dei Musei 41\\
         25121 Bre\-scia, Italy}
\email{marco.degiovanni@unicatt.it}
\thanks{The authors are member of the 
        Gruppo Nazionale per l'Analisi Matematica, la Probabilit\`a 
				e le loro Applicazioni (GNAMPA) of the 
				Istituto Nazionale di Alta Matematica (INdAM)}
\author{Alessandra Pluda}
\address{Fakult\"{a}t f\"{u}r Mathematik \\
          Universit\"{a}t Regensburg\\
          Universit\"{a}tstrasse 31\\
          93053 Regensburg, Germany}
\email{Alessandra.Pluda@mathematik.uni-regensburg.de}

\dedicatory{Dedicated to Gianni Gilardi}

\keywords{Quasilinear elliptic equations, divergence form, 
natural growth conditions, multiple solutions, degree theory, 
invariance by diffeomorphism.}

\subjclass[2010]{35J66, 47H11}



%
\begin{abstract}
We prove the existence of multiple solutions for a quasilinear
elliptic equation containing a term with natural growth, under
assumptions that are invariant by diffeomorphism.
To this purpose we develop an adaptation of degree theory.
\end{abstract}
\maketitle 


\section{Introduction}
Consider the quasilinear elliptic problem
\begin{equation}
\label{eq:qe}
\left\{
\begin{array}{ll}
- \mathrm{div}\left[A(x,u)\nabla u\right]
+ B(x,u)|\nabla u|^2 = g(x,u)
&\qquad\text{in $\Omega$}\,,\\
\noalign{\medskip}
u=0
&\qquad\text{on $\partial\Omega$}\,,
\end{array}
\right.
\end{equation}
where $\Omega$ is a bounded, connected and open subset of $\R^n$ and
\[
A, B, g:\Omega\times\R\rightarrow\R
\]
are Carath\'eodory functions such that:
\emph{
\begin{enumerate}
\item[\mylabel{ABi}{\ABi}]
for every $R>0$ there exist $\beta_R> 0$ and $\nu_R>0$ satisfying
\[
\nu_R \leq A(x,s) \leq \beta_R\,,\qquad
|B(x,s)| \leq \beta_R\,,\qquad
|g(x,s)| \leq \beta_R\,|s| \,,
\]
for a.e. $x\in\Omega$ and every $s\in\R$ with $|s|\leq R$.
\end{enumerate}
}
The existence of a weak solution 
$u\in W^{1,2}_0(\Omega)\cap L^\infty(\Omega)$
of~\eqref{eq:qe}, namely a solution of
\begin{multline}
\label{eq:AB}
\int_\Omega \left[A(x,u)\nabla u\cdot\nabla v
+ B(x,u)|\nabla u|^2\,v\right]\,dx = 
\int_\Omega g(x,u)v\,dx \\
\text{for any 
$v\in W^{1,2}_0(\Omega)\cap L^\infty(\Omega)$}\,,
\end{multline}
follows from the results 
of~\cite{boccardo_murat_puel1984, boccardo_murat_puel1988},
provided that a suitable \emph{a priori} $L^\infty$-estimate holds,
possibly related to the existence of a pair of sub-/super-solutions
(see 
e.g.~\cite[Th\'eor\`eme~2.1]{boccardo_murat_puel1984}
and~\cite[Theorems~1 and~2]{boccardo_murat_puel1988}).
Moreover, each weak solution 
$u\in W^{1,2}_0(\Omega)\cap L^\infty(\Omega)$ is locally 
H\"{o}lder continuous in $\Omega$ (see 
e.g.~\cite[Theorem~VII.1.1]{giaquinta1983}).
\par
The existence of multiple solutions, in the semilinear case
and under suitable regularity assumptions,
has also been proved for instance in~\cite{amann_crandall1978}.
Here we are interested in the existence of multiple 
nontrivial solutions, as~\ref{ABi} implies that $g(x,0)=0$,
under assumptions that do not imply an \emph{a priori}
$W^{1,\infty}$-estimate.
Let us state our main result.
\begin{thm}
\label{thm:main}
Assume~\ref{ABi} and also that:
\begin{enumerate}
\item[\mylabel{ABii}{\ABii}]
there exist $\underline{M}<0<\overline{M}$ such that
\[
g(x,\underline{M}) \geq 0 \geq g(x,\overline{M})
\qquad\text{for a.e. $x\in\Omega$}\,;
\]
\item[\mylabel{ABiii}{\ABiii}]
the function $g(x,\cdot)$ is differentiable at $s=0$ for 
a.e. $x\in\Omega$.
\end{enumerate}
\par
Consider the eigenvalue problem
\begin{equation}
\label{eq:ABlin}
\left\{
\begin{array}{ll}
- \mathrm{div}\left[A(x,0)\nabla u\right] - D_sg(x,0)u
= \lambda u
&\qquad\text{in $\Omega$}\,,\\
\noalign{\medskip}
u=0
&\qquad\text{on $\partial\Omega$}\,,
\end{array}
\right.
\end{equation}
and denote by $(\lambda_k)$, $k\geq 1$, the sequence of the 
eigenvalues repeated according to multiplicity.
\par
If there exists $k\geq 2$ with $\lambda_k<0<\lambda_{k+1}$
and $k$ even, then problem~\eqref{eq:AB} admits at least
three nontrivial solutions $u_1, u_2, u_3$ in 
$W^{1,2}_0(\Omega)\cap L^\infty(\Omega)\cap C(\Omega)$ with
\[
\text{$u_1<0$ in $\Omega$, \quad $u_2>0$ in $\Omega$,\quad
$u_3$ sign-changing}\,.
\]
\end{thm}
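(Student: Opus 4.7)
The plan is to combine the sub-/super-solution method with the adapted degree developed earlier in the paper, concluding by a degree-mismatch argument that forces the existence of a sign-changing solution.

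\emph{Reduction and construction of $u_1,u_2$.}
By~\ref{ABii}, $\overline M$ and $\underline M$ are a super- and a sub-solution; modifying $g(x,s)$ outside $[\underline M,\overline M]$ and testing with $(u-\overline M)^+$ and $(\underline M-u)^+$ confines every weak solution to this interval, so the solution set coincides with that of~\eqref{eq:AB} and is a priori bounded in $W^{1,2}_0(\Omega)\cap L^\infty(\Omega)$; H\"older continuity on $\Omega$ follows from~\cite{giaquinta1983}. Since $k\geq 2$ and $\lambda_k<0$, also $\lambda_1<0$; let $\varphi_1>0$ be a corresponding eigenfunction of~\eqref{eq:ABlin}. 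By~\ref{ABiii}, for $\varepsilon>0$ small enough $\varepsilon\varphi_1$ is a strict sub-solution, since the differential operator applied to $\varepsilon\varphi_1$ equals $\varepsilon\lambda_1\varphi_1+o(\varepsilon)<0$ in a suitable weak sense. Pairing it with $\overline M$ and invoking the sub-/super-solution theorem for quasilinear problems with natural growth~\cite{boccardo_murat_puel1984,boccardo_murat_puel1988} yields a solution $u_2$ with $\varepsilon\varphi_1\leq u_2\leq\overline M$, hence $u_2>0$; analogously $u_1<0$ in $[\underline M,-\varepsilon\varphi_1]$.

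\emph{Degree computations.}
Let $F$ be the map supplied by the adapted degree whose zeros are the solutions, and fix $R$ so large that every zero of $F$ lies in $B_R$. The \emph{index of $0$}: by~\ref{ABiii} the linearization at $0$ is the operator in~\eqref{eq:ABlin}; since $\lambda_k<0<\lambda_{k+1}$, $0$ is isolated and, as $k$ is even, $\degc_0(F)=(-1)^k=+1$. The \emph{total degree in $B_R$}: through a suitable homotopy from the modified problem to its linearization at $0$, chosen so that the sub-/super-solution pair persists along the homotopy (providing the a priori $L^\infty$ bound) and $0$ is never an eigenvalue of the running linearization, no zero crosses $\partial B_R$, so $d(F,B_R,0)$ equals the degree at $0$ of the linear problem, namely $(-1)^k=+1$. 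The \emph{aggregate indices of the sign-constant solutions}: an Amann-type argument in the order interval $[0,\overline M]$ computes the degree there as $+1$, while the contribution of $0$ from that side vanishes because $\varepsilon\varphi_1$ is a strict sub-solution (so $0$ is not the minimal solution in the positive cone). Hence the strictly positive solutions contribute $+1$ in total, and symmetrically the strictly negative ones contribute $+1$.

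\emph{Sign-changing solution and main obstacle.}
Additivity and excision now force
\[
+1=d(F,B_R,0)=\degc_0(F)+d_{+}^{\mathrm{tot}}+d_{-}^{\mathrm{tot}}+d_{\mathrm{sc}}=1+1+1+d_{\mathrm{sc}},
\]
so $d_{\mathrm{sc}}=-2\neq 0$ and at least one sign-changing solution $u_3$ exists. The real difficulty will not be this degree arithmetic but the degree itself: the natural-growth term $B(x,u)|\nabla u|^2$ obstructs a direct Leray--Schauder reduction, so one needs a degree that is invariant under the diffeomorphic change of unknown used to absorb that term while still satisfying excision, homotopy invariance, and the correct linearization formula $(-1)^\mu$. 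That construction---presumably developed earlier in the paper---is the technical heart; granted it, the scheme above runs by essentially classical steps.
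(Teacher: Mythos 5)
Your high-level plan -- reduce to the variational inequality on the order interval $[\underline M,\overline M]$, compute $\degc(\{0\})=(-1)^k$, $\degc(Z^{tot})=1$, $\degc(Z_\pm)=1$, and extract a sign-changing solution from the degree mismatch -- coincides with the paper's strategy, and your arithmetic $1=1+1+1+d_{\mathrm{sc}}$ is exactly the contradiction the paper reaches. The paper also shares your observation that $\lambda_1<0$ follows from $k\ge2$, uses the strong maximum principle (its Proposition~\ref{prop:strongmax}) for the strict sign of $u_1,u_2$, and defers the degree construction to earlier sections, as you anticipate.

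Where you diverge, and where there is a real gap, is the computation of $\degc(Z_+)$ (and symmetrically $\degc(Z_-)$). You propose an Amann-type argument in the order interval $[0,\overline M]$, taking $\varepsilon\varphi_1$ as a strict sub-solution and arguing that ``the contribution of $0$ from that side vanishes.'' This step is not justified as stated. The index of $0$ for the constrained problem depends on the obstacle set: in Proposition~\ref{prop:zero} the paper gets $\degc(\{0\})=(-1)^k$ only because the obstacle $[\underline M,\overline M]$ is inactive at $0$ (the constraints can be rescaled away, reducing to the unconstrained Leray--Schauder index), and that argument fails for a one-sided obstacle such as $[0,\overline M]$, where the constraint is active at $0$. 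So you cannot simply subtract $(-1)^k$ from the order-interval degree, nor is it clear that the contribution of $0$ is zero; the value would have to be computed by a separate cone-index argument that you have not supplied. The paper avoids this difficulty entirely: Lemma~\ref{lem:pos} builds a homotopy $g_t=g+t(\psi+\beta s^-)$ for which, at $t=1$, $0$ is no longer a solution and all solutions are nonnegative, so $\widehat Z_1=Z^{tot}(H_1,K)$ and Theorem~\ref{thm:tot} gives $\degc(Z_+)=1$ directly, with no cone-index computation. You would also need to verify that $\varepsilon\varphi_1$ is a sub-solution for the full quasilinear operator including the $B(x,u)|\nabla u|^2$ term, which requires a $W^{1,\infty}$ control on $\varphi_1$ that is available here but is an extra step you gloss over; the paper sidesteps constructing an interior sub-solution altogether. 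Finally, a smaller point: the paper keeps both-sided obstacle $K$ fixed throughout and uses the diffeomorphism/integrating-factor normalization (Propositions~\ref{prop:theta} and~\ref{prop:sign}) to arrange $B\le0$ or $B\ge0$ as needed when doing sign arguments, whereas your write-up does not exploit this normalization, which would make the maximum-principle and testing steps more delicate.
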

If $A$ is constant and $B=0$, the result is essentially
contained in~\cite{ambrosetti_lupo1984, struwe1982},
which in turn developed previous results
of~\cite{ambrosetti_mancini1979}.
Actually, in those papers it is enough to assume that 
$\lambda_2<0$,
because in that case~\eqref{eq:AB} is the Euler-Lagrange
equation of a suitable functional and variational methods,
e.g. Morse theory, can be applied.
\par
In our case there is no functional and also degree theory 
arguments cannot be applied in a standard way, because
of the presence of the term $B(x,u)|\nabla u|^2$.
Let us point out that our assumptions do not imply that
the solutions $u$ of~\eqref{eq:AB} belong to 
$W^{1,\infty}(\Omega)$, so that the natural growth term
$B(x,u)|\nabla u|^2$ plays a true role.
\par
Let us also mention that our statement has 
an invariance property.
\begin{rem}
Let $\varphi:\R\rightarrow\R$ be an increasing
$C^2$-diffeomorphism with $\varphi(0)=0$.
Then the following facts hold:
\begin{enumerate}
\item[$(a)$]
the functions $A$, $B$ and $g$ satisfy the
assumptions of Theorem~\ref{thm:main} if and only
$A^\varphi$, $B^\varphi$ and $g^\varphi$, defined as
\begin{alignat*}{3}
&A^\varphi(x,s)
&&=
(\varphi'(s))^2\,A(x,\varphi(s))\,,\\
&B^\varphi(x,s)
&&=
\varphi'(s) \varphi''(s) A(x,\varphi(s)) 
+ (\varphi'(s))^3 B(x,\varphi(s))\,,\\
&g^\varphi(x,s) 
&&= 
\varphi'(s) g(x,\varphi(s))\,,
\end{alignat*}
do the same;
in particular, we have
\[
A^\varphi(x,0) = (\varphi'(0))^2\,A(x,0)\,,\qquad
D_sg^\varphi(x,0) = (\varphi'(0))^2\,D_sg(x,0)\,;
\]
\item[$(b)$]
a function
$u\in W^{1,2}_0(\Omega)\cap L^\infty(\Omega)$
is a solution of~\eqref{eq:AB} if and only if
$\varphi^{-1}(u)$ is a solution of~\eqref{eq:AB}
with $A$, $B$ and $g$ replaced by
$A^\varphi$, $B^\varphi$ and $g^\varphi$, respectively.
\end{enumerate}
\end{rem}
From the definition of $B^\varphi$ we see that the term
$B(x,u)|\nabla u|^2$ cannot be omitted in the equation,
if we want to ensure this kind of invariance.
\par
When~\eqref{eq:AB} is the Euler-Lagrange equation
associated to a functional, the question of invariance
under suitable classes of diffeomorphisms has been already
treated in~\cite{solferino_squassina2012}, where it is shown
that problems with degenerate coercivity can be reduced, 
in some cases, to coercive problems.
\par
In the next sections we develop an adaptation of degree
theory suited for our setting and then we prove 
Theorem~\ref{thm:main} by a degree argument.
Under assumptions that are not diffeomorphism-invariant,
a degree theory for quasilinear elliptic equations with
natural growth conditions has been already developed
in~\cite{almi_degiovanni2013}.
Here we find it more convenient to reduce the 
equation~\eqref{eq:AB} to a variational inequality
possessing as obstacles a pair of sub-/super-solutions,
according to an approach already considered for
instance in~\cite{saccon2014}.


\section{Topological degree in reflexive Banach spaces}
\label{sect:banach}
Let $X$ be a reflexive real Banach space.
\begin{defn}
\label{defn:S+}
A map $F:D\rightarrow X'$, with $D\subseteq X$, is said
to be \emph{of class~$(S)_+$} if, for every sequence $(u_k)$
in $D$ weakly convergent to some $u$ in $X$ with
\[
\limsup_k \, \langle F(u_k),u_k-u\rangle \leq 0\,,
\]
it holds $\|u_k-u\|\to 0$.
\par
More generally, if $T$ is a metrizable topological space,
a map $H:D\rightarrow X'$, with $D\subseteq X\times T$, is said
to be \emph{of class~$(S)_+$} if, for every sequence
$(u_k,t_k)$ in $D$ with
$(u_k)$ weakly convergent to $u$ in $X$,
$(t_k)$ convergent to $t$ in $T$ and
\[
\limsup_k \, \langle H_{t_k}(u_k),u_k-u\rangle \leq 0\,,
\]
it holds $\|u_k-u\|\to 0$ (we write $H_t(u)$
instead of $H(u,t)$).
\end{defn}
Assume now that $U$ is a bounded and open subset of $X$, 
$F:\overline{U}\rightarrow X'$ a bounded and
continuous map of class~$(S)_+$, $K$ a closed and convex
subset of $X$ and $\varphi\in X'$.
We aim to consider the variational inequality
\begin{equation}
\label{eq:vi}
\begin{cases}
u\in K \,,\\
\noalign{\medskip}
\langle F(u),v-u\rangle\geq \langle\varphi,v-u\rangle
&\qquad\forall v\in K\,.
\end{cases}
\end{equation}
\begin{rem}
It is easily seen that the set
\[
\left\{u\in \cl{U}:\,\,
\text{$u$ is a solution of~\eqref{eq:vi}}\right\}
\]
is compact (possibly empty).
\end{rem}
According 
to~\cite{browder1983, motreanu_motreanu_papageorgiou2014, 
skrypnik1994},
if the variational inequality~\eqref{eq:vi}
has no solution $u\in \partial{U}$, one can define
the topological degree
\[
\mathrm{deg}((F,K),U,\varphi)\in\Z\,.
\]
Let us recall some basic properties.
\begin{prop}
If~\eqref{eq:vi} has no solution $u\in \partial{U}$, then
\[
\mathrm{deg}((F,K),U,\varphi) = \mathrm{deg}((F-\varphi,K),U,0)\,.
\]
\end{prop}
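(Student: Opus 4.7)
The variational inequality~\eqref{eq:vi} is literally equivalent to the one obtained by replacing $F$ with $F-\varphi$ and $\varphi$ with $0$, since
\[
\langle F(u),v-u\rangle\geq\langle\varphi,v-u\rangle
\quad\Longleftrightarrow\quad
\langle F(u)-\varphi,v-u\rangle\geq 0.
\]
In particular, both degrees are associated with exactly the same set of solutions in $\overline{U}$, and neither problem has solutions on $\partial U$. The task is only to promote this tautology to the equality of the integer-valued degrees.

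To do this I would use a homotopy argument. Define $H:\overline{U}\times[0,1]\to X'$ by $H_t(u):=F(u)-t\varphi$ and consider the family of variational inequalities
\[
u\in K,\qquad \langle H_t(u),v-u\rangle\geq (1-t)\langle\varphi,v-u\rangle\quad\forall v\in K.
\]
At $t=0$ this is~\eqref{eq:vi}; at $t=1$ it is the variational inequality for $(F-\varphi,K)$ against the zero functional; and for every $t\in[0,1]$ the inequality is equivalent to $\langle F(u)-\varphi,v-u\rangle\geq 0$. Hence the solution set is independent of $t$, so by hypothesis no solution lies on $\partial U$ throughout the homotopy.

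It remains to check that $H$ fulfils the assumptions required for homotopy invariance of the degree in the form stated in~\cite{browder1983, motreanu_motreanu_papageorgiou2014, skrypnik1994}: continuity and boundedness of $H$ are immediate from those of $F$ together with the fact that $t\mapsto t\varphi$ is Lipschitz into $X'$, while the $(S)_+$ property in the sense of Definition~\ref{defn:S+} holds for $H$ because, whenever $u_k\rightharpoonup u$ in $X$ and $t_k\to t$, one has $t_k\langle\varphi,u_k-u\rangle\to 0$ and therefore
\[
\limsup_k\,\langle H_{t_k}(u_k),u_k-u\rangle=\limsup_k\,\langle F(u_k),u_k-u\rangle,
\]
so the $(S)_+$ property of $F$ transfers directly to $H$. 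Applying homotopy invariance then yields $\mathrm{deg}((F,K),U,\varphi)=\mathrm{deg}((F-\varphi,K),U,0)$. The only point requiring care is matching the precise form of the homotopy theorem in the cited references, that is, whether the map $H_t$ and the right-hand side $(1-t)\varphi$ may be varied simultaneously, or whether one must split the deformation into two successive stages; this is a bookkeeping matter rather than a conceptual obstacle.
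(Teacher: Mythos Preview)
Your argument is correct. The paper itself does not supply a proof of this proposition: it is listed among the basic properties of the degree recalled from~\cite{browder1983, motreanu_motreanu_papageorgiou2014, skrypnik1994}, so there is no ``paper's proof'' to compare against. Your homotopy $H_t(u)=F(u)-t\varphi$ with right-hand side $(1-t)\varphi$ works exactly as you say, and your verification of the $(S)_+$ property is clean.

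That said, the homotopy is more machinery than the statement really needs. In the constructions of the degree in the cited references (via Galerkin approximation and Brouwer degree, or via the associated penalized equations), the quantity $\mathrm{deg}((F,K),U,\varphi)$ depends only on the variational inequality itself, i.e.\ on the map $u\mapsto F(u)-\varphi$ restricted to $K$. Since the inequalities $\langle F(u),v-u\rangle\ge\langle\varphi,v-u\rangle$ and $\langle F(u)-\varphi,v-u\rangle\ge 0$ are literally the same, the two degrees coincide by definition; no deformation is required. Your closing remark about ``bookkeeping'' in matching the homotopy theorem is thus avoidable: one can simply unwind the definition rather than invoke invariance.
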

\begin{prop}
\label{prop:traslX}
If~\eqref{eq:vi} has no solution $u\in \partial{U}$,
$u_0\in X$ and we set 
\begin{alignat*}{3}
&\widehat{U} &&=
\left\{u-u_0:\,\,u\in U\right\}\,,\\
&\widehat{K} &&=
\left\{u-u_0:\,\,u\in K\right\}\,,\\
&\widehat{F}(u) &&= F(u_0+u)\,,
\end{alignat*}
then
\[
\mathrm{deg}((\widehat{F},\widehat{K}),\widehat{U},\varphi) 
= \mathrm{deg}((F,K),U,\varphi)\,.
\]
\end{prop}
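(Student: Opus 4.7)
The plan is to deduce the translation invariance directly from the construction of the degree and the translation invariance of the underlying $(S)_+$-degree (or equivalently of the Brouwer degree at each finite-dimensional stage). The map $u\mapsto u-u_0$ is an affine homeomorphism of $X$, and the entire situation behaves covariantly under it.

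First, I would verify that $\widehat{F}:\overline{\widehat{U}}\to X'$ is bounded, continuous and of class~$(S)_+$: if $u_k\rightharpoonup u$ in $\widehat{U}$, set $v_k:=u_k+u_0$ and $v:=u+u_0$, so that $v_k\rightharpoonup v$ in $U$ and
\[
\langle\widehat{F}(u_k),u_k-u\rangle=\langle F(v_k),v_k-v\rangle\,,
\]
and the $(S)_+$-property of $F$ forces $\|v_k-v\|\to 0$, i.e. $\|u_k-u\|\to 0$. Moreover $\widehat{U}$ is bounded and open, $\widehat{K}$ is closed and convex, and $u\in K\cap\overline{U}$ solves~\eqref{eq:vi} if and only if $u-u_0\in\widehat{K}\cap\overline{\widehat{U}}$ solves the variational inequality associated with $\widehat{F}$, $\widehat{K}$ and $\varphi$; in particular $u\in\partial U$ iff $u-u_0\in\partial\widehat{U}$, so the hypothesis on the absence of boundary solutions is preserved.

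Second, I would invoke the definition of $\mathrm{deg}((F,K),U,\varphi)$ as in~\cite{browder1983, skrypnik1994, motreanu_motreanu_papageorgiou2014}. In those references the degree is obtained either via Galerkin approximation on a sequence of finite-dimensional subspaces $X_n\subseteq X$ with $\bigcup_n X_n$ dense, or by reduction to the $(S)_+$-degree of a penalized map such as $F+\varepsilon^{-1}(I-P_K)-\varphi$ on $U$. In either construction, applying $u\mapsto u-u_0$ produces the corresponding approximation for the translated problem: the $X_n$ can be chosen to contain $u_0$, so the Galerkin subspaces are preserved by the translation, and the metric projection satisfies $P_{\widehat{K}}(u)=P_K(u+u_0)-u_0$, so the penalization transforms correctly. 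The translation invariance of the Brouwer degree at each finite-dimensional stage then yields, by passage to the limit, the equality of the two variational-inequality degrees.

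The main obstacle is purely bookkeeping: one must check that, whichever of the equivalent definitions of the degree is adopted, the translation $u\mapsto u-u_0$ interchanges the data $(F,K,U)$ and $(\widehat{F},\widehat{K},\widehat{U})$ at every step of the construction. No new ingredient is required, because the definition of the degree uses only intrinsic objects (weak convergence, the convex set $K$, the duality pairing) which are manifestly translation-covariant on $X$.
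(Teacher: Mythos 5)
The paper gives no proof of this proposition at all: it is listed, together with Theorems~\ref{thm:existencedegree}--\ref{thm:add-exc}, among the ``basic properties'' of the degree recalled from~\cite{browder1983, motreanu_motreanu_papageorgiou2014, skrypnik1994}, so there is no argument in the paper to compare your proposal against. Your sketch is nonetheless a reasonable account of why the fact holds. The first paragraph is fully correct: the change of variables $v_k=u_k+u_0$ shows cleanly that $\widehat{F}$ inherits boundedness, continuity and the class-$(S)_+$ property from $F$, and that the solution sets of the two variational inequalities correspond under translation, including the boundary condition. The identity $P_{\widehat{K}}(u)=P_K(u+u_0)-u_0$ for the metric projection is also correct.

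Two cautions about the middle paragraph. First, the metric projection $P_K$ is not a convenient tool in a general reflexive Banach space, where it need not be single-valued or continuous; the constructions actually used in the cited references proceed via Galerkin-type finite-dimensional approximations or via the degree for multivalued $(S)_+$-maps, not via a literal penalization with $P_K$. Second, and more importantly, ``whichever of the equivalent definitions of the degree is adopted'' quietly presupposes the very normalization and uniqueness results you are trying to use; a careful proof should pick the one construction that the paper later relies on (the one in~\cite{motreanu_motreanu_papageorgiou2014}, whose Theorem~4.53 and Proposition~4.61 are invoked in the proof of Theorem~\ref{thm:homotopydegree}) and verify translation covariance step by step against that construction. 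For the Galerkin version this is more than ``choosing $X_n$ to contain $u_0$'': the degree is defined as an eventual value over the directed set of all finite-dimensional subspaces, and one must check that the translation $L\mapsto L$ (the subspaces themselves do not move) intertwines the finite-dimensional approximating maps for $(F,K)$ with those for $(\widehat{F},\widehat{K})$. This is indeed only bookkeeping, as you say, but it is the part you would need to write out to turn the sketch into a proof. Given that the paper itself treats the statement as standard and cites the literature, your outline is at an appropriate level of detail, but it would not stand on its own as a self-contained proof.
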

\begin{thm}
\label{thm:existencedegree}
If~\eqref{eq:vi} has no solution $u\in \cl{U}$, then
$\mathrm{deg}((F,K),U,\varphi) = 0$.
\end{thm}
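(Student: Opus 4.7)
The statement is the existence property of the degree, in its contrapositive form. My plan is to first reduce to $\varphi=0$ by the immediately preceding proposition, so that~\eqref{eq:vi} becomes $\langle F(u),v-u\rangle\geq 0$ for all $v\in K$, and then to invoke the Galerkin construction of the degree from the references cited above.

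Concretely, one fixes an increasing sequence $(X_n)$ of finite-dimensional subspaces of $X$ with dense union, chosen compatibly with $K$ so that $\bigcup_n(K\cap X_n)$ is dense in $K$. Setting $K_n=K\cap X_n$, $U_n=U\cap X_n$, and letting $F_n\colon\cl{U}\cap X_n\to X_n'$ be the natural projection of $F|_{\cl{U}\cap X_n}$ (that is, $\langle F_n(u),v\rangle=\langle F(u),v\rangle$ for $v\in X_n$), the integer $\mathrm{deg}((F,K),U,0)$ is defined, for $n$ large, as the classical Brouwer degree of the finite-dimensional variational inequality attached to $(F_n,K_n,U_n)$.

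The core step is to show that, for $n$ large, this finite-dimensional inequality has no solution in $\cl{U_n}$; the theorem would then follow from the classical vanishing of the Brouwer degree whenever a finite-dimensional admissible map has no solution in the closure of its domain. I would argue by contradiction: if $u_n\in\cl{U_n}$ solves the approximate inequality along a subsequence, then boundedness of $U$ and reflexivity of $X$ yield a further subsequence with $u_n\rightharpoonup u\in\cl{U}$. For an arbitrary $v\in K$ I would pick $v_n\in K_n$ with $v_n\to v$ and test against $v_n$; specializing to $v=u$ and invoking the $(S)_+$ property would upgrade the weak convergence to $u_n\to u$, and passing to the limit in the remaining tests would produce a solution $u\in\cl{U}$ of the original variational inequality, contradicting the hypothesis.

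The main obstacle I anticipate is the compatibility of the finite-dimensional approximation with $K$: one has to arrange $(X_n)$ so that every test vector $v\in K$ is the strong limit of some $v_n\in K_n$, which is where the convex structure of $K$ enters. Once this density is in place, the $(S)_+$ condition supplies exactly the compactness needed to convert the weak convergence of finite-dimensional solutions into a strong limit solving the limiting variational inequality.
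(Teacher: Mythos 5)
The paper does not prove this statement: Theorem~\ref{thm:existencedegree} is one of the ``basic properties'' of the degree for variational inequalities that the paper simply recalls, attributing them to the cited references (Browder, Skrypnik, Motreanu--Motreanu--Papageorgiou). There is therefore no in-paper argument to compare against, and your task was to reconstruct a proof from the definition of the degree in those references.

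Your Galerkin sketch is in fact the standard argument there, and it is essentially correct. A few points are worth sharpening. First, the finite-dimensional problem attached to $(F_n,K_n,U_n)$ is itself a variational inequality, not a map; to invoke the vanishing of the Brouwer degree one converts it to an equivalent fixed-point or normal-map equation (as Browder and Skrypnik do), so ``the classical vanishing of Brouwer degree for maps with no zeros'' has to be transported through that conversion. Second, your contradiction argument is delicate on one point you do handle correctly but should make explicit: $\cl{U}$ is strongly closed but not weakly closed, so $u_n\rightharpoonup u$ alone does not place $u$ in $\cl{U}$. You need $u_n\to u$ strongly before concluding $u\in\cl{U}$, and this is precisely what the $(S)_+$ property delivers once you have $\limsup_n\langle F(u_n),u_n-u\rangle\leq 0$; the latter follows by testing the $n$-th inequality against $v_n\in K_n$ with $v_n\to u$ (which requires $u\in K$, obtained first from the weak closedness of the closed convex set $K$). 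Third, the compatibility issue you flag, i.e.\ arranging $\bigcup_n(K\cap X_n)$ dense in $K$, is real and is exactly what the references take care to build into the choice of the Galerkin scheme; with that in place your passage to the limit in the inequality is routine. So the approach is the intended one; the only gaps are the routine translation from finite-dimensional VI to Brouwer degree and the density of the finite-dimensional convex sets, both of which are part of the definition of the degree rather than of this particular proof.
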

\begin{thm}
\label{thm:deg1}
If~\eqref{eq:vi} has no solution $u\in \partial{U}$
and there exists $u_0\in K\cap U$ such that
\[
\langle F(v),v-u_0\rangle \geq \langle\varphi,v-u_0\rangle
\qquad\text{for any $v\in K\cap\partial{U}$}\,,
\]
then $\mathrm{deg}((F,K),U,\varphi) = 1$.
\end{thm}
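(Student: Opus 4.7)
The plan is a Leray--Schauder-style homotopy argument: after two reductions supplied by the preceding propositions, I connect $F$ to a duality map of known degree~$1$ via an affine homotopy, and verify the boundary condition by means of the hypothesis on $K\cap\partial U$.

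First, the first (unnumbered) Proposition above allows me to absorb $\varphi$: replacing $F$ by $F-\varphi$, which is still bounded, continuous and of class $(S)_+$, I may assume $\varphi=0$. Next, Proposition~\ref{prop:traslX}, applied with the given $u_0$, translates the whole problem so that, in the new coordinates, $u_0=0\in K\cap U$ and
$$
\langle F(v),v\rangle \geq 0 \qquad\text{for every } v\in K\cap\partial U.
$$
It therefore suffices to prove $\mathrm{deg}((F,K),U,0)=1$ under these normalised hypotheses.

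For the reference map I take the duality map $J:X\to X'$ after a Troyanski-type equivalent renorming of $X$, so that $J$ is single-valued, bounded, continuous, strictly monotone, of class $(S)_+$, and satisfies $\langle Ju,u\rangle=\|u\|^2$. The variational inequality with $F$ replaced by $J$ and right-hand side $0$ has $u=0\in U$ as its unique solution (it is the projection of $0$ onto $K$ in the renormed sense, which equals $0$ since $0\in K$); by the normalization property of the $(S)_+$-VI degree of~\cite{browder1983,skrypnik1994,motreanu_motreanu_papageorgiou2014} this yields $\mathrm{deg}((J,K),U,0)=1$. I then introduce the affine homotopy
$$
H_t(u)=tF(u)+(1-t)Ju, \qquad t\in[0,1],
$$
which is bounded, continuous, and of class $(S)_+$ in the sense of Definition~\ref{defn:S+} (convex combinations of $(S)_+$ maps are again $(S)_+$ by a routine verification).

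The crucial step is the boundary check: if some $u\in K\cap\partial U$ solved the VI associated to $H_t$ with right-hand side $0$, then testing against $v=0\in K$ would yield
$$
t\,\langle F(u),u\rangle + (1-t)\,\|u\|^2 \leq 0,
$$
and since both terms are non-negative they must vanish. For $t<1$ this forces $u=0$, contradicting $0\in U$; for $t=1$, $u$ would be a solution of the original inequality~\eqref{eq:vi} on $\partial U$, again a contradiction. Homotopy invariance of the degree then gives
$$
\mathrm{deg}((F,K),U,0) = \mathrm{deg}((J,K),U,0) = 1,
$$
and undoing the two reductions completes the proof. The obstacle I anticipate is not computational but axiomatic: I must make sure that the affine homotopy and the normalization value $\mathrm{deg}((J,K),U,0)=1$ are indeed admissible within the exact construction of the $(S)_+$-VI degree adopted from the cited references, and that a duality map $J$ of class $(S)_+$ is accessible (via Troyanski renorming) in that framework. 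Once these are in place, the boundary inequality above is the whole story.
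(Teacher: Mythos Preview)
The paper does not actually prove Theorem~\ref{thm:deg1}; it is listed among the ``basic properties'' of the $(S)_+$-VI degree that are \emph{recalled} from \cite{browder1983, motreanu_motreanu_papageorgiou2014, skrypnik1994}, so there is no proof in the paper to compare against. What you have written is essentially the standard argument found in those references: reduce to $\varphi=0$ and $u_0=0$, then homotope $F$ affinely to a duality map $J$ whose associated VI has the unique solution $0\in U$, and invoke the normalization axiom for $J$.

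Your argument is correct, with two remarks. First, the sentence ``convex combinations of $(S)_+$ maps are again $(S)_+$'' is a little glib: the verification for the parametrised homotopy $H_t=tF+(1-t)J$ in the sense of Definition~\ref{defn:S+} really uses that $J$ is monotone (so $\liminf_k\langle Ju_k,u_k-u\rangle\ge0$), which lets you split the $\limsup$ and throw the burden onto $F$ when $t>0$, and onto $J$ when $t=0$; it would be safer to say this explicitly rather than suggest it holds for arbitrary pairs of $(S)_+$ maps. Second, your closing caveat is well placed: the identity $\mathrm{deg}((J,K),U,0)=1$ must come from the \emph{normalization axiom} of the degree construction in the cited references (unique solution $0\in K\cap U$), not from Theorem~\ref{thm:deg1} itself, or the argument would be circular. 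With these points made precise, your proof is complete and coincides with the textbook derivation.
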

\begin{thm}
\label{thm:add-exc}
If $U_0$ and $U_1$ are two disjoint open subsets of $U$
and~\eqref{eq:vi} has no solution 
$u\in \cl{U}\setminus(U_0\cup U_1)$, then
\[
\mathrm{deg}((F,K),U,\varphi) =
\mathrm{deg}((F,K),U_0,\varphi) + 
\mathrm{deg}((F,K),U_1,\varphi) \,.
\]
\end{thm}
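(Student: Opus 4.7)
The plan is to reduce the statement to the elementary additivity and excision properties of the degree as defined in~\cite{browder1983, motreanu_motreanu_papageorgiou2014, skrypnik1994}, by a standard two-step construction: first excise the region that carries no solutions, and then split the remaining open set into its two disjoint pieces.

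\textbf{Separation step.} By the remark following~\eqref{eq:vi}, the set $S$ of solutions of~\eqref{eq:vi} lying in $\overline{U}$ is compact. The hypothesis gives $S \subseteq U_0 \cup U_1$, so $S$ decomposes as the disjoint union of the two compact sets $S_i := S \cap U_i$, $i=0,1$. Since $X$ is metric (hence normal), I can choose two disjoint open neighborhoods $V_0, V_1$ with
\[
S_i \subseteq V_i \subseteq \overline{V_i} \subseteq U_i
\qquad (i=0,1).
\]

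\textbf{Excision and additivity.} With these $V_i$, the variational inequality has no solution in the closed set $\overline{U}\setminus(V_0\cup V_1)$, so excision of the underlying $(S)_+$-degree gives
\[
\mathrm{deg}((F,K),U,\varphi) = \mathrm{deg}((F,K),V_0\cup V_1,\varphi).
\]
Because $V_0$ and $V_1$ are disjoint open sets, additivity of the degree over a disjoint union further yields
\[
\mathrm{deg}((F,K),V_0\cup V_1,\varphi) = \mathrm{deg}((F,K),V_0,\varphi) + \mathrm{deg}((F,K),V_1,\varphi).
\]
Finally, since the inequality has no solution in $\overline{U_i}\setminus V_i$, a second application of excision inside each $U_i$ gives $\mathrm{deg}((F,K),V_i,\varphi) = \mathrm{deg}((F,K),U_i,\varphi)$; combining the three identities produces the claim.

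\textbf{Main obstacle.} The formal bookkeeping above is routine once one has the excision and disjoint-additivity properties of $\mathrm{deg}((F,K),\cdot,\varphi)$ at hand. The substantive content therefore lies in verifying those two properties at the construction level, namely checking that the elementary corresponding properties of the finite-dimensional Brouwer degree survive the Galerkin (or penalization) limit procedure by which the variational-inequality degree is built. Since this is classical in the cited references, one could alternatively absorb this theorem into the list of standard properties recalled from~\cite{browder1983, motreanu_motreanu_papageorgiou2014, skrypnik1994}, with the above separation argument serving merely to package excision and disjoint additivity into the single combined statement.
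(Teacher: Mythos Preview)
Your proposal is correct, and indeed you anticipate the paper's treatment in your final paragraph: the paper does not give a proof of this theorem at all, but simply lists it among the basic properties of the variational-inequality degree recalled from~\cite{browder1983, motreanu_motreanu_papageorgiou2014, skrypnik1994}. Your separation--excision--additivity argument is the standard way to derive the combined statement from the underlying properties and is entirely in line with what those references contain.
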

\begin{defn}
Let $K_k$, $K$ be closed and convex subsets of $X$.
The sequence $(K_k)$ is said to be \emph{Mosco-convergent}
to $K$ if the following facts hold:
\begin{enumerate}
\item[$(a)$]
if $k_j\to\infty$, $u_{k_j}\in K_{k_j}$ for any $j\in\N$
and $(u_{k_j})$ is weakly convergent to $u$ in $X$,
then $u\in K$;
\item[$(b)$]
for every $u\in K$ there exist $\overline{k}\in\N$ and a sequence 
$(u_k)$ strongly convergent to $u$ in~$X$ with $u_k\in K_k$ 
for any $k\geq\overline{k}$.
\end{enumerate}
\end{defn}
\begin{thm}
\label{thm:homotopydegree}
Let $W$ be a bounded and open subset of $X\times[0,1]$,
$H:\overline{W}\rightarrow X'$ be
a bounded and continuous map of class~$(S)_+$
and $(K_t)$, $0\leq t\leq1$, be a family of closed
and convex subsets of $X$ such that, for every sequence $(t_k)$
convergent to $t$ in $[0,1]$, the sequence $(K_{t_k})$ is
Mosco-convergent to $K_t$.
\par 
Then the following facts hold:
\begin{enumerate}
\item[$(a)$]
the set of pairs $(u,t)\in\cl{W}$, satisfying
\begin{equation}
\label{eq:hvi}
\begin{cases}
u\in K_t\,,\\
\noalign{\medskip}
\langle H_t(u),v-u\rangle\geq \langle\varphi,v-u\rangle
\qquad\forall v\in K_t\,,
\end{cases}
\end{equation}
is compact (possibly empty);
\item[$(b)$]
if the problem~\eqref{eq:hvi} has no solution 
$(u,t)\in \partial_{X\times[0,1]}\,W$ and we set
\[
W_t = \left\{u\in X:\,\,(u,t)\in W\right\}\,,
\]
then $\mathrm{deg}((H_{t},K_t),W_t,\varphi)$ is independent of
$t\in[0,1]$.
\end{enumerate}
\end{thm}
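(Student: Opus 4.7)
My plan is to prove $(a)$ directly by an extraction-plus-$(S)_+$ argument, and to deduce $(b)$ by showing that $\gamma(t):=\mathrm{deg}((H_t,K_t),W_t,\varphi)$ is locally constant on the connected interval $[0,1]$, hence constant since it takes values in $\Z$.

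\emph{Proof of $(a)$.} Let $(u_k,t_k)\in\cl{W}$ solve~\eqref{eq:hvi}. Reflexivity of $X$ and boundedness of $W$ yield, along a subsequence, $u_k\rightharpoonup u$ weakly in $X$, $t_k\to t$ in $[0,1]$, and $(u,t)\in\cl{W}$. Mosco property~$(a)$ gives $u\in K_t$. By Mosco~$(b)$, pick $w_k\in K_{t_k}$ (for $k$ large) with $w_k\to u$ strongly and test~\eqref{eq:hvi} with $v=w_k$:
\[
\langle H_{t_k}(u_k),u_k-w_k\rangle\leq\langle\varphi,u_k-w_k\rangle\,.
\]
Since $(H_{t_k}(u_k))$ is bounded in $X'$ and $\|w_k-u\|\to 0$, we get $\limsup_k\langle H_{t_k}(u_k),u_k-u\rangle\leq 0$, and the $(S)_+$-property forces $u_k\to u$ strongly. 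Passing to the limit in~\eqref{eq:hvi}, for an arbitrary $v\in K_t$ approximated by $v_k\in K_{t_k}$ via Mosco~$(b)$, and using continuity of $H$, shows that $(u,t)$ solves~\eqref{eq:hvi}.

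\emph{Proof of $(b)$.} Fix $t_0\in[0,1]$. From $(a)$ and the hypothesis that no solution lies on $\partial_{X\times[0,1]}W$, a compactness-plus-separation argument yields $\delta>0$ and a bounded open set $V\subset X$ such that, for every $t\in[0,1]$ with $|t-t_0|\leq\delta$, one has $\cl{V}\subset W_t$ and every solution of~\eqref{eq:hvi} on this window lies in $V$. Excision (Theorem~\ref{thm:add-exc}) then identifies $\gamma(t)$ with $\mathrm{deg}((H_t,K_t),V,\varphi)$. To compare $\gamma(t)$ with $\gamma(t_0)$, I would invoke the Galerkin machinery underlying the degree: fix a dense filtration $X_1\subset X_2\subset\cdots$ of $X$ by finite-dimensional subspaces; the slices $K_t\cap X_n$ inherit Mosco-continuity in $t$ for each fixed $n$, so the finite-dimensional Brouwer degree of the projected variational inequality on $V\cap X_n$ is invariant under the $t$-homotopy by the elementary Brouwer invariance. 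The construction of the $(S)_+$-degree in~\cite{browder1983, motreanu_motreanu_papageorgiou2014, skrypnik1994} identifies these Brouwer degrees, for $n$ large, with $\gamma(t)$ uniformly for $|t-t_0|\leq\delta$, yielding $\gamma(t)=\gamma(t_0)$ on the whole window.

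The main obstacle is the uniformity in this last step: one needs to know that as $n\to\infty$ the finite-dimensional solutions of the projected variational inequality remain trapped in $V$ uniformly in $t\in[t_0-\delta,t_0+\delta]\cap[0,1]$, and that the Galerkin Brouwer degrees stabilise to $\gamma(t)$ uniformly in $t$. Both points follow from Galerkin versions of the argument in $(a)$, but it is precisely this coherent interaction between Mosco convergence, the $(S)_+$-property and the finite-dimensional approximation that constitutes the nontrivial content of the invariance theorem in the cited references; in practice one verifies that the hypotheses of the present statement fit into their framework.
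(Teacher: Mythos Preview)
Your proof of~$(a)$ is correct and is essentially the paper's own argument.

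For~$(b)$, your overall strategy (show $t\mapsto\gamma(t)$ is locally constant, using compactness to localise to a fixed open set $V$ and then invoke excision) coincides with the paper's. The execution, however, diverges from the paper at the key step and contains a genuine error. Your claim that the slices $K_t\cap X_n$ inherit Mosco-continuity in~$t$ is false in general: take $X=\R^2$, $X_1=\R\times\{0\}$, and $K_t=\{(x,tx):x\in\R\}$; then $K_{t_k}\to K_0$ in the Mosco sense as $t_k\to 0$, yet $K_{t_k}\cap X_1=\{0\}$ for $t_k\neq 0$ while $K_0\cap X_1=X_1$. The Galerkin construction of the degree for variational inequalities does not proceed by intersecting the constraint set with finite-dimensional subspaces, so the sketch you give of ``the machinery underlying the degree'' does not describe what the cited references actually do, and your final appeal to them does not land cleanly.

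The paper handles this step differently. It first uses the Michael selection theorem to produce a continuous $\gamma:[0,1]\to X$ with $\gamma(t)\in K_t$, translates everything by $-\gamma(t)$ so that $0\in K_t$ for all~$t$, and only then invokes the homotopy-invariance theorem from~\cite{motreanu_motreanu_papageorgiou2014} (their Theorem~4.53 and Proposition~4.61), which requires precisely this normalisation. You neither perform this translation nor verify that the hypotheses of the cited black-box results are met without it. The paper also treats separately the case in which some $K_t$ is empty: there the degree vanishes by Theorem~\ref{thm:existencedegree}, and a short contradiction argument using Mosco property~$(a)$ shows it vanishes nearby as well. Your proposal omits this case entirely.
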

\begin{proof}
If $(u_k,t_k)$ is a sequence in $\cl{W}$ constituted
by solutions of~\eqref{eq:hvi}, then up to a subsequence
$(u_k)$ is weakly convergent to some $u$ in $X$ 
and $(t_k)$ is convergent to some $t$ in~$[0,1]$.
Then $u\in K_t$ and there exists a sequence $(\hat{u}_k)$ 
strongly convergent to $u$ in $X$ with $\hat{u}_k\in K_{t_k}$.
It follows
\[
\begin{split}
\langle H_{t_k}(u_k),u_k-u\rangle 
&=
\langle H_{t_k}(u_k),\hat{u}_k-u\rangle 
+ \langle H_{t_k}(u_k),u_k-\hat{u}_k\rangle \\
&\leq
\langle H_{t_k}(u_k),\hat{u}_k-u\rangle 
+ \langle \varphi,u_k-\hat{u}_k\rangle \,,
\end{split}
\]
whence
\[
\limsup_k\,\langle H_{t_k}(u_k),u_k-u\rangle \leq 0\,.
\]
Then $\|u_k-u\|\to 0$ and $(u,t)\in\cl{W}$.
For every $v\in K_t$ there exists a sequence $(v_k)$ 
strongly convergent to $v$ in $X$ with $v_k\in K_{t_k}$.
From
\[
\langle H_{t_k}(u_k),v_k-u_k\rangle\geq \langle\varphi,v_k-u_k\rangle
\]
it follows
\[
\langle H_{t}(u),v-u\rangle\geq \langle\varphi,v-u\rangle
\]
so that the set introduced in assertion~$(a)$ is compact.
\par
Assume now that the problem~\eqref{eq:hvi}
has no solution $(u,t)\in \partial_{X\times[0,1]}\,W$.
It is enough to prove that 
$\left\{t\mapsto \mathrm{deg}((H_{t},K_t),W_t,\varphi)\right\}$
is locally constant.
\par
Suppose first that $K_t\neq\emptyset$ for any $t\in[0,1]$.
By Michael selection theorem (see
e.g.~\cite[Theorem~1.11.1]{aubin_cellina1984})
there exists a continuous map 
$\gamma:[0,1]\rightarrow X$
such that $\gamma(t)\in K_t$ for any $t\in[0,1]$.
\par
If we set
\begin{alignat*}{3}
&\widehat{W}&&=
\left\{(u-\gamma(t),t):\,\,(u,t)\in W\right\}\,,\\
&\widehat{K}_t &&=
\left\{u-\gamma(t):\,\,u\in K_t\right\}\,,\\
&\widehat{H}_t(u) &&= H_t(\gamma(t)+u)\,,
\end{alignat*}
then $\widehat{W}$, $\widehat{K}_t$ and $\widehat{H}$
satisfy the same assumptions and
\[
\mathrm{deg}((\widehat{H}_t,\widehat{K}_t),\widehat{W}_t,\varphi) 
= \mathrm{deg}((H_t,K_t),W_t,\varphi)
\]
by Proposition~\ref{prop:traslX}.
Moreover $0\in \widehat{K}_t$ for any $t\in[0,1]$.
Therefore we may assume, without loss of generality,
that $0\in K_t$ for any $t\in[0,1]$.
\par
Given $t\in[0,1]$, there exist a bounded and open subset
$U$ of $X$ and $\delta>0$ such that
\[
U\times ([t-\delta,t+\delta]\cap[0,1]) \subseteq W
\]
and such that~\eqref{eq:hvi} has no solution $(u,\tau)$ in
\[
W \setminus (U\times [t-\delta,t+\delta])
\]
with $t-\delta\leq\tau\leq t+\delta$.
From Theorem~\ref{thm:add-exc} we infer that
\[
\mathrm{deg}((H_{\tau},K_\tau),W_\tau,\varphi) =
\mathrm{deg}((H_{\tau},K_\tau),U,\varphi)
\qquad\text{for any $\tau\in[t-\delta,t+\delta]$}\,.
\]
From~\cite[Theorem~4.53 
and~Proposition~4.61]{motreanu_motreanu_papageorgiou2014}
we deduce that
$\left\{\tau\mapsto 
\mathrm{deg}((H_{\tau},K_\tau),U,\varphi)\right\}$
is constant on $[t-\delta,t+\delta]$.
\par
In general, given $t\in[0,1]$, let us distinguish the
cases $K_t\neq \emptyset$ and $K_t= \emptyset$.
\par
If $K_t\neq \emptyset$, 
by the Mosco-convergence there exists $\delta>0$ such that
$K_\tau\neq\emptyset$ for any $\tau\in[t-\delta,t+\delta]$.
By the previous step we infer that 
$\left\{\tau\mapsto 
\mathrm{deg}((H_{\tau},K_\tau),W_\tau,\varphi)\right\}$
is constant on $[t-\delta,t+\delta]$.
\par
If $K_t= \emptyset$, from 
Theorem~\ref{thm:existencedegree} we infer that
$\mathrm{deg}((H_{t},K_t),W_t,\varphi)=0$.
Assume, for a contradiction, that there exists a 
sequence $(t_k)$ convergent to $t$ with
$\mathrm{deg}((H_{t_k},K_{t_k}),W_{t_k},\varphi)\neq 0$ 
for any $k\in\N$.
Again from Theorem~\ref{thm:existencedegree} we infer that
the problem~\eqref{eq:hvi}
has a solution $(u_k,t_k)\in \cl{W}$, in particular
$u_k\in K_{t_k}$, for any $k\in\N$.
Up to a subsequence, $(u_k)$ is weakly convegent to some
$u$, whence $u\in K_t$ by the Mosco-convergence, and
a contradiction follows.
\end{proof}
\par\smallskip
Now let $\Omega$ be a bounded and open subset of $\R^n$, 
let $T$ be a metrizable topological space and let
\begin{align*}
& a:\Omega\times
\left(\R\times\R^n\times T\right)\rightarrow \R^n\,,\\
\noalign{\medskip}
& b:\Omega\times
\left(\R\times\R^n\times T\right)\rightarrow \R
\end{align*}
be two Carath\'eodory functions.
We will denote by $\|~\|_p$ the usual norm in $L^p$ and
write $a_t(x,s,\xi)$, $b_t(x,s,\xi)$ instead of
$a(x,(s,\xi,t))$, $b(x,(s,\xi,t))$.
\par
In this section, we assume that $a_t$ and $b_t$ satisfy the 
\emph{controllable growth conditions} in the sense
of~\cite{giaquinta1983}, uniformly with respect to $t$.
In a simplified form enough for our purposes, this means 
that:
\emph{
\begin{enumerate}[align=parleft]
\item[\mylabel{uc}{\uc}]
there exist $p\in]1,\infty[$, $\alpha^{(0)}\in L^1(\Omega)$,
$\alpha^{(1)}\in L^{p'}(\Omega)$, $\beta > 0$ 
and $\nu>0$ such that
\begin{alignat*}{3}
& \left|a_t(x,s,\xi)\right| &&\leq
\alpha^{(1)}(x) + \beta\,|s|^{p-1} 
+ \beta\,|\xi|^{p-1}\,,\\
\noalign{\medskip}
& \left|b_t(x,s,\xi)\right| &&\leq
\alpha^{(1)}(x) + \beta\,|s|^{p - 1} + 
\beta\,|\xi|^{p-1}\,,\\
\noalign{\medskip}
&a_t(x,s,\xi)\cdot \xi &&\geq
\nu |\xi|^p - \alpha^{(0)}(x) - \beta\,|s|^{p}\,,
\end{alignat*}
for a.e. $x\in\Omega$ and every $s\in\R$, $\xi\in\R^n$, 
$t\in T$; such a $p$ is clearly unique.
\end{enumerate}
}
It follows
\[
\left\{
\begin{array}{l}
a_t(x,u,\nabla u) \in L^{p'}(\Omega) \\
\noalign{\medskip}
b_t(x,u,\nabla u) \in L^{p'}(\Omega)
\subseteq W^{-1,p'}(\Omega)
\end{array}
\right.
\qquad\text{for any $t\in T$ and $u\in W^{1,p}_0(\Omega)$}
\]
and the map
$H:W^{1,p}_0(\Omega)\times T\rightarrow W^{-1,p'}(\Omega)$
defined by
\[
H_t(u) = - \mathrm{div}\left[a_t(x,u,\nabla u)\right]
+ b_t(x,u,\nabla u) 
\]
is continuous and bounded on $B\times T$, whenever $B$ is
bounded in $W^{1,p}_0(\Omega)$.
\begin{thm}
\label{thm:controllable}
Assume~\ref{uc} and also that:
\begin{enumerate}
\item[\mylabel{um}{\um}]
we have
\[
\left[a_t(x,s,\xi)-a_t(x,s,\hat{\xi})\right]
\cdot(\xi-\hat{\xi}) > 0
\]
for a.e. $x\in\Omega$ and every $s\in\R$, $\xi,\hat{\xi}\in\R^n$,
$t\in T$, with $\xi\neq\hat{\xi}$.
\end{enumerate}
\indent
Then
$H:W^{1,p}_0(\Omega)\times T\rightarrow W^{-1,p'}(\Omega)$
is of class~$(S)_+$.
\end{thm}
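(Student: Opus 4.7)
\smallskip\noindent\textbf{Plan of proof.}
Let $(u_k,t_k)$ be a sequence in $W^{1,p}_0(\Omega)\times T$ with $u_k\rightharpoonup u$ in $W^{1,p}_0(\Omega)$, $t_k\to t$ in $T$, and $\limsup_k\langle H_{t_k}(u_k),u_k-u\rangle\leq 0$. The goal is to upgrade weak to strong convergence. I would split the duality pairing as
\[
\langle H_{t_k}(u_k),u_k-u\rangle
=\int_\Omega a_{t_k}(x,u_k,\nabla u_k)\cdot\nabla(u_k-u)\,dx
+\int_\Omega b_{t_k}(x,u_k,\nabla u_k)(u_k-u)\,dx.
\]
By Rellich--Kondrachov, $u_k\to u$ in $L^p(\Omega)$. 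Thanks to \ref{uc}, the sequence $b_{t_k}(x,u_k,\nabla u_k)$ is bounded in $L^{p'}(\Omega)$, so H\"older's inequality forces the $b$-integral to vanish. Therefore
\[
\limsup_k\int_\Omega a_{t_k}(x,u_k,\nabla u_k)\cdot\nabla(u_k-u)\,dx\leq 0.
\]

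The heart of the argument is the usual Leray--Lions trick based on the nonnegative quantity
\[
\omega_k(x):=\bigl[a_{t_k}(x,u_k,\nabla u_k)-a_{t_k}(x,u_k,\nabla u)\bigr]\cdot(\nabla u_k-\nabla u)\geq 0,
\]
which is well defined and integrable by \ref{um} and \ref{uc}. Expanding $\int_\Omega\omega_k\,dx$, the first cross-term is the $a$-integral above, while the second, $\int_\Omega a_{t_k}(x,u_k,\nabla u)\cdot\nabla(u_k-u)\,dx$, tends to $0$: indeed, by the Carath\'eodory property of $a$ and the growth bound in \ref{uc}, $a_{t_k}(x,u_k,\nabla u)\to a_t(x,u,\nabla u)$ strongly in $L^{p'}(\Omega;\R^n)$ (up to a subsequence with $u_k\to u$ a.e., invoking Vitali using the $L^{p'}$ bound on $|\nabla u|^{p-1}$), and $\nabla(u_k-u)\rightharpoonup 0$ in $L^p$. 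Combining with the $\limsup$ above I obtain $\int_\Omega\omega_k\,dx\to 0$, so $\omega_k\to 0$ in $L^1(\Omega)$ and, up to extraction, $\omega_k\to 0$ a.e.

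Next I would deduce pointwise a.e.\ convergence of the gradients. Fix $x$ in the full measure set where $u_k(x)\to u(x)$ and $\omega_k(x)\to 0$. The coercivity in \ref{uc} gives $a_{t_k}(x,u_k(x),\nabla u_k(x))\cdot\nabla u_k(x)\geq\nu|\nabla u_k(x)|^p-\alpha^{(0)}(x)-\beta|u_k(x)|^p$, while the growth bounds control the three remaining terms in the expansion of $\omega_k(x)$ by $C(1+|\nabla u_k(x)|^{p-1})$; thus $\omega_k(x)\to 0$ forces $(\nabla u_k(x))$ to be bounded. Any limit point $\xi^*$ satisfies $[a_t(x,u(x),\xi^*)-a_t(x,u(x),\nabla u(x))]\cdot(\xi^*-\nabla u(x))=0$, hence $\xi^*=\nabla u(x)$ by the strict monotonicity \ref{um}. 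Therefore $\nabla u_k\to\nabla u$ a.e.

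The last step, which I expect to be the most delicate one, is to promote a.e.\ convergence to $L^p$-convergence of the gradients. I would use Vitali's convergence theorem, checking equi-integrability of $|\nabla u_k|^p$: the coercivity in \ref{uc} yields
\[
\nu|\nabla u_k|^p\leq \omega_k+a_{t_k}(x,u_k,\nabla u_k)\cdot\nabla u+a_{t_k}(x,u_k,\nabla u)\cdot(\nabla u_k-\nabla u)+\alpha^{(0)}(x)+\beta|u_k|^p,
\]
and the three middle terms can be bounded by Young's inequality, absorbing a small multiple of $|\nabla u_k|^p$ on the left; what remains is a sum of $L^1$-convergent terms (using $\omega_k\to 0$ in $L^1$, $u_k\to u$ in $L^p$, and the $L^{p'}$ convergence of $a_{t_k}(x,u_k,\nabla u)$) plus fixed equi-integrable pieces. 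Hence $|\nabla u_k|^p$ is equi-integrable, and Vitali yields $\nabla u_k\to\nabla u$ in $L^p(\Omega;\R^n)$. Combined with $u_k\to u$ in $L^p$ and a standard subsequence argument (every subsequence has a further subsequence converging strongly), this gives $u_k\to u$ in $W^{1,p}_0(\Omega)$, as required.
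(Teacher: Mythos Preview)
Your argument is the standard Leray--Lions proof of the $(S)_+$ property and is correct as outlined; the steps (splitting off the $b$-term by compact embedding, the monotonicity trick with $\omega_k$, pointwise convergence of gradients via strict monotonicity, and Vitali for the upgrade to $L^p$) are all sound under \ref{uc} and \ref{um}. The paper does not give an independent proof: it simply refers to \cite[Theorem~1.2.1]{skrypnik1994}, whose argument is precisely the one you have reproduced, so there is no methodological difference to discuss.
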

\begin{proof}
See e.g.~\cite[Theorem~1.2.1]{skrypnik1994}.
\end{proof}
%


\section{Quasilinear elliptic variational inequalities with
natural growth conditions}
\label{sect:qevin}
Again, let $\Omega$ be a bounded and open subset of $\R^n$ and 
let now
\begin{align*}
& a:\Omega\times
\left(\R\times\R^n\right)\rightarrow \R^n\,,\\
\noalign{\medskip}
& b:\Omega\times
\left(\R\times\R^n\right)\rightarrow \R
\end{align*}
be two Carath\'eodory functions.
In this paper we are interested in the case in which
$a$ and $b$ satisfy the \emph{natural growth conditions}
in the sense of~\cite{giaquinta1983}.
More precisely, we assume that:
\emph{
\begin{enumerate}
\item[\mylabel{hni}{\hni}]
there exist $p\in]1,\infty[$ and, for every $R>0$,
$\alpha^{(0)}_R\in L^1(\Omega)$,
$\alpha^{(1)}_R\in L^{p'}(\Omega)$, $\beta_R > 0$ 
and $\nu_R>0$ such that
\begin{align*}
& |a(x,s,\xi)| \leq
\alpha^{(1)}_R(x) + \beta_R\,|\xi|^{p-1}\,,\\
\noalign{\medskip}
& |b(x,s,\xi)| \leq
\alpha^{(0)}_R(x) + \beta_R\,|\xi|^{p}\,,\\
\noalign{\medskip}
& a(x,s,\xi)\cdot\xi \geq
\nu_R\,|\xi|^{p} - \alpha^{(0)}_R(x)\,,\\
\end{align*}
for a.e. $x\in\Omega$ and every $s\in\R$, $\xi\in\R^n$
with $|s|\leq R$;
such a $p$ is clearly unique;
\item[\mylabel{hnii}{\hnii}]
we have
\[
\left[a(x,s,\xi)-a(x,s,\hat{\xi})\right]
\cdot(\xi-\hat{\xi}) > 0
\]
for a.e. $x\in\Omega$ and every $s\in\R$, $\xi,\hat{\xi}\in\R^n$
with $\xi\neq\hat{\xi}$.
\end{enumerate}
}
Then we can define a map
\[
F:W^{1,p}_0(\Omega)\cap L^{\infty}(\Omega)
\rightarrow W^{-1,p'}(\Omega) + L^1(\Omega)
\]
by
\[
F(u) = - \mathrm{div}\left[a(x,u,\nabla u)\right]
+ b(x,u,\nabla u) \,.
\]
\begin{rem}
Assume that $\hat{a}$ and $\hat{b}$ also 
satisfy~\ref{hni} and~\ref{hnii}.
An easy density argument shows that, if
\begin{multline*}
\int_{\Omega}
\bigl[a(x,u,\nabla u)\cdot\nabla v +
b(x,u,\nabla u)\,v\bigr]\,dx 
=
\int_{\Omega}
\bigl[\hat{a}(x,u,\nabla u)\cdot\nabla v +
\hat{b}(x,u,\nabla u)\,v\bigr]\,dx \\
\text{for any $u\in W^{1,p}_0(\Omega)\cap L^\infty(\Omega)$
and any $v\in C^\infty_c(\Omega)$}\,,
\end{multline*}
then
\begin{multline*}
\int_{\Omega}
\bigl[a(x,u,\nabla u)\cdot\nabla v +
b(x,u,\nabla u)\,v\bigr]\,dx 
=
\int_{\Omega}
\bigl[\hat{a}(x,u,\nabla u)\cdot\nabla v +
\hat{b}(x,u,\nabla u)\,v\bigr]\,dx \\
\text{for any $u\in W^{1,p}_0(\Omega)\cap L^\infty(\Omega)$
and any $v\in W^{1,p}_0(\Omega)\cap L^\infty(\Omega)$}\,.
\end{multline*}
\end{rem}
Consider also a $p$-quasi upper semicontinuous 
function $\underline{u}:\Omega\rightarrow\Rb$
and a $p$-quasi lower semicontinuous 
function $\overline{u}:\Omega\rightarrow\Rb$,
and set
\[
K=\left\{u\in W^{1,p}_0(\Omega)\cap L^{\infty}(\Omega):
\,\,\text{$\underline{u}\leq \tilde{u}\leq\overline{u}$
\,\,$p$-q.e. in $\Omega$}\right\}\,,
\]
where $\tilde{u}$ is any $p$-quasi continuous representative
of $u$ (see e.g.~\cite{dalmaso1983}).
\par
We aim to consider the solutions of the variational inequality
\begin{equation}
\label{eq:qevi}
\tag{$VI$}
\begin{cases}
u\in K\,,\\
\noalign{\medskip}
\displaystyle{
\int_{\Omega}
\bigl[a(x,u,\nabla u)\cdot\nabla (v-u) +
b(x,u,\nabla u)\,(v-u)\bigr]\,dx \geq 0}\\
\noalign{\medskip}
\qquad\qquad\qquad\qquad\qquad\qquad\qquad\qquad\qquad\qquad
\text{for every $v\in K$}\,.
\end{cases}
\end{equation}
We denote by $Z^{tot}(F,K)$ the set of solutions $u$
of~\eqref{eq:qevi}.
We will simply write $Z^{tot}$, if no confusion can arise.
\par
For every $u\in K$, we also set
\begin{multline*}
T_uK = \biggl\{v\in W^{1,p}_0(\Omega)\cap L^{\infty}(\Omega):\,\,
\text{$\tilde{v}\geq 0$\,\,$p$-q.e. in 
$\{\tilde{u}=\underline{u}\}$
and
} \\ \text{
$\tilde{v}\leq 0$\,\,$p$-q.e. in 
$\{\tilde{u}=\overline{u}\}$}\biggr\}\,.
\end{multline*}
\begin{prop}
\label{prop:TK}
A function $u\in K$ satisfies~\eqref{eq:qevi} if and  only if
\[
\int_{\Omega}
\bigl[a(x,u,\nabla u)\cdot\nabla v +
b(x,u,\nabla u)\,v\bigr]\,dx \geq 0
\qquad\text{for every $v\in T_uK$}\,.
\]
\end{prop}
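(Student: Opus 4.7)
The ``if'' direction is immediate: for every $w \in K$, the difference $v := w - u$ lies in $T_uK$, because comparing the $p$-quasi continuous representatives, $\tilde w \ge \underline u$ $p$-q.e.\ forces $\tilde v \ge 0$ $p$-q.e.\ on $\{\tilde u = \underline u\}$, and symmetrically on $\{\tilde u = \overline u\}$; specializing the hypothesis to $v = w - u$ then recovers~\eqref{eq:qevi}.

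For the non-trivial direction, fix $v \in T_uK$. The plan is to build, for small $t > 0$, a competitor $w_t \in K$ of the form
\[
w_t = u + \min\bigl\{\overline u - u,\; \max\{\underline u - u,\; tv\}\bigr\} = u + tv - \eta_t + \zeta_t,
\]
with $\eta_t := (u + tv - \overline u)_+$ and $\zeta_t := (\underline u - u - tv)_+$. The first task is to verify $w_t \in K$: by construction $\underline u \le w_t \le \overline u$ $p$-q.e., while $w_t \in W^{1,p}_0(\Omega) \cap L^\infty(\Omega)$ follows from the standard lattice properties of $W^{1,p}$ under truncation against $p$-quasi semicontinuous obstacles, together with $u, v \in W^{1,p}_0 \cap L^\infty$. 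Plugging $w_t$ into \eqref{eq:qevi} and dividing by $t$ yields
\[
\int_\Omega \Bigl[a(x,u,\nabla u) \cdot \nabla \tfrac{w_t - u}{t} + b(x,u,\nabla u)\, \tfrac{w_t - u}{t}\Bigr]\, dx \ge 0,
\]
and it then remains to pass to the limit $t \to 0^+$ in order to recover the inequality tested against $v$.

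Pointwise convergence is easy: $\eta_t/t = (v - (\overline u - u)/t)_+ \to 0$ $p$-q.e., because on $\{\tilde u < \overline u\}$ we have $(\overline u - u)/t \to +\infty$, while on $\{\tilde u = \overline u\}$ the defining sign condition $\tilde v \le 0$ $p$-q.e.\ makes the positive part vanish identically; an analogous argument handles $\zeta_t/t$, and both remainders are uniformly dominated by $|v| \in L^\infty(\Omega)$. The main obstacle is upgrading this to convergence in $W^{1,p}_0(\Omega)$, i.e.\ showing $\nabla(\eta_t/t), \nabla(\zeta_t/t) \to 0$ in $L^p$; this is handled through the Sobolev chain rule $\nabla(f)_+ = \chi_{\{f>0\}}\nabla f$, combined with the fact that the ``excess'' sets $\{u + tv > \overline u\}$ and $\{u + tv < \underline u\}$ collapse, up to $p$-capacity zero, onto the contact sets $\{\tilde u = \overline u\}$ and $\{\tilde u = \underline u\}$, where the sign of $\tilde v$ is prescribed and cancels the surviving $\nabla v$ contribution. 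Once this strong $W^{1,p}$-convergence is established, H\"older's inequality (using $a(x,u,\nabla u) \in L^{p'}$) controls the $a$-term, and the uniform $L^\infty$-bound together with dominated convergence (using $b(x,u,\nabla u) \in L^1$) controls the $b$-term, so the desired inequality $\int_\Omega[a(x,u,\nabla u)\cdot\nabla v + b(x,u,\nabla u)v]\,dx \ge 0$ follows.
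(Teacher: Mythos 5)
The ``if'' direction of your proof is correct and coincides with the paper's: the inclusion $K \subseteq u + T_uK$ follows directly from comparing quasi-continuous representatives on the contact sets, exactly as you argue.

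The ``only if'' direction has a genuine gap. Your competitor is $w_t = u + \operatorname{med}\{\underline u - u,\, tv,\, \overline u - u\}$, and you assert $w_t \in W^{1,p}_0(\Omega)\cap L^\infty(\Omega)$ by appealing to ``standard lattice properties of $W^{1,p}$ under truncation against $p$-quasi semicontinuous obstacles.'' That is not a standard fact, and it is precisely the point at which the argument breaks. The lattice property of $W^{1,p}$ concerns $\max/\min$ of \emph{two Sobolev} functions; the obstacles $\underline u$ and $\overline u$ are merely $p$-quasi upper/lower semicontinuous (with values in $\Rb$, possibly nowhere Sobolev), so the truncation $\operatorname{med}\{\underline u, u+tv, \overline u\}$ need not lie in $W^{1,p}_0(\Omega)$. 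The same difficulty reappears, unacknowledged, in your passage to the limit: you write $\nabla(\eta_t)=\chi_{\{u+tv>\overline u\}}\nabla(u+tv-\overline u)$, but $\overline u$ has no gradient, so the Sobolev chain rule is not available and the $L^p$-convergence of $\nabla(\eta_t/t)$ cannot be extracted this way. Note that the pointwise $p$-q.e.\ collapse of the excess sets onto the contact sets, which you observe correctly, does not by itself yield $W^{1,p}$-convergence.

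The paper avoids the whole issue by working one-sidedly and invoking a capacity-theoretic approximation result: for $v\in T_uK$ with $v\le 0$, the decreasing sequence $\max\{k(\underline u - \tilde u),\tilde v\}$ of nonpositive $p$-quasi upper semicontinuous functions converges $p$-q.e.\ to $\tilde v$, and Dal Maso's Lemma~1.6 produces $v_k\in W^{1,p}_0(\Omega)$ with $v_k\to v$ strongly, $\tilde v_k\le 0$, and $\tilde v_k\ge \max\{k(\underline u-\tilde u),\tilde v\}$ $p$-q.e.\ This last inequality is exactly what guarantees $u+\tfrac1k v_k\in K$ (the Sobolev regularity of the competitor is built in), after which one divides by $\tfrac1k$ and passes to the limit using strong $W^{1,p}_0$-convergence of $(v_k)$. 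The general $v$ is then handled via $v=v^+-v^-$ with $v^+,-v^-\in T_uK$. If you want a direct-truncation argument to work, you would need to add hypotheses (e.g.\ $\underline u,\overline u$ locally Sobolev, or an equivalent ``regular obstacle'' assumption) that the paper deliberately does not impose.
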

\begin{proof}
Assume that $u$ is a solution of~\eqref{eq:qevi} and
let $v\in T_uK$ with $v\leq 0$ a.e. in $\Omega$.
Since $\max\{k(\underline{u}-\tilde{u}),\tilde{v}\}$ is a 
nonincreasing sequence of nonpositive $p$-quasi upper 
semicontinuous functions converging to $\tilde{v}$
$p$-q.e. in $\Omega$,
by~\cite[Lemma~1.6]{dalmaso1983} there exists a
sequence $(v_k)$ in $W^{1,p}_0(\Omega)$
converging to $v$ in $W^{1,p}_0(\Omega)$ with
$\tilde{v}_k \geq \max\{k(\underline{u}-\tilde{u}),\tilde{v}\}$ 
$p$-q.e. in $\Omega$.
Without loss of generality, we may assume that 
$\tilde{v}_k\leq 0$ $p$-q.e. in $\Omega$.
\par
Then it follows that $u+\frac{1}{k}\,v_k \in K$, whence
\[
\int_{\Omega}
\bigl[a(x,u,\nabla u)\cdot\nabla v_k +
b(x,u,\nabla u)\,v_k\bigr]\,dx \geq 0\,.
\]
Going to the limit as $k\to\infty$, we get
\[
\int_{\Omega}
\bigl[a(x,u,\nabla u)\cdot\nabla v +
b(x,u,\nabla u)\,v\bigr]\,dx \geq 0\,.
\]
If $v\in T_uK$ with $v\geq 0$ a.e. in $\Omega$,
the argument is similar.
Since every $v\in T_uK$ can be written as
$v=v^+ - v^-$ with $v^+, - v^-\in T_uK$,
it follows
\[
\int_{\Omega}
\bigl[a(x,u,\nabla u)\cdot\nabla v +
b(x,u,\nabla u)\,v\bigr]\,dx \geq 0
\qquad\text{for every $v\in T_uK$}\,.
\]
Since $K\subseteq u + T_uK$, the converse is obvious.
\end{proof}
We are also interested in the invariance of the problem
with respect to suitable transformations.
\par
Let us denote by $\Phi$ the set of increasing 
$C^2$-diffeomorphisms
$\varphi:\R\rightarrow\R$ such that $\varphi(0)=0$ and by
$\Theta$ the set of $C^1$-functions
$\vartheta:\R\rightarrow]0,+\infty[$.
\par
For any $\varphi\in\Phi$ and $\vartheta\in \Theta$,
we define
\[
F^\varphi\,,\,\,F_\vartheta:
W^{1,p}_0(\Omega)\cap L^{\infty}(\Omega)
\rightarrow W^{-1,p'}(\Omega) + L^1(\Omega)
\]
by
\[
F^\varphi(u) = F(\varphi(u))\,,\,\,
F_\vartheta(u) = \vartheta(u)\,F(u)\,.
\]
If we define the Carath\'eodory functions
\[
a^{\varphi}, a_\vartheta:
\Omega\times\left(\mathbb{R}\times\mathbb{R}^{n}
\right)\rightarrow\mathbb{R}^{n}\,,\qquad
b^{\varphi}, b_\vartheta:
\Omega\times\left(\mathbb{R}\times\mathbb{R}^{n}
\right)\rightarrow\mathbb{R}
\]
by
\begin{alignat*}{3}
&a^{\varphi}(x,s,\xi)&&=
\varphi'(s)\,a(x,\varphi(s),\varphi'(s)\xi)
\,,\\
&b^{\varphi}(x,s,\xi)&&=
\varphi''(s)\,a(x,\varphi(s),\varphi'(s)\xi)\cdot\xi 
+\varphi'(s)\,b(x,\varphi(s),\varphi'(s)\xi)\,,\\
&a_{\vartheta}(x,s,\xi)&&=
\vartheta(s) \, a(x,s,\xi)\,,\\
&b_{\vartheta}(x,s,\xi)&&=
\vartheta'(s) \, a(x,s,\xi)\cdot\xi 
+\vartheta(s) \, b(x,s,\xi)\,,
\end{alignat*}
it easily follows that
\begin{alignat*}{3}
&F^\varphi(u) &&= 
- \mathrm{div}\left[a^\varphi(x,u,\nabla u)\right]
+ b^\varphi(x,u,\nabla u) \,,\\
&F_\vartheta(u) &&= 
- \mathrm{div}\left[a_\vartheta(x,u,\nabla u)\right]
+ b_\vartheta(x,u,\nabla u) \,.
\end{alignat*}
We also set $u^{\varphi} = \varphi^{-1}(u)$ and,
given a set $E$ of real valued functions, 
$E^{\varphi} = \left\{u^{\varphi}:\,\,u\in E\right\}$.
\par
It is easily seen that 
\[
(a^\varphi)^\psi = a^{\varphi\circ\psi}\,,\qquad
(a_\vartheta)_\varrho = a_{\vartheta\varrho}\,,\qquad
(u^\varphi)^\psi = u^{\varphi\circ\psi}\,,
\]
for every $\varphi, \psi\in\Phi$ and $\vartheta, \varrho\in\Theta$.
\par
We also say that $(a,b)$ is of 
\emph{Euler-Lagrange type}, if there exists a function
\[
L:\Omega\times\mathbb{R}\times\mathbb{R}^{n}\rightarrow\mathbb{R}
\]
such that
\begin{enumerate}
\item[$(a)$]
$\left\{x\mapsto L(x,s,\xi)\right\}$ is measurable
for every $(s,\xi)\in\R\times\R^n$;
\item[$(b)$]
$\left\{(s,\xi)\mapsto L(x,s,\xi)\right\}$ is of class $C^1$
for a.e. $x\in\Omega$;
\item[$(c)$]
we have
\[
a(x,s,\xi) = \nabla_\xi L(x,s,\xi)\,,\qquad
b(x,s,\xi) = D_s L(x,s,\xi)\,,
\]
for a.e. $x\in\Omega$ and every $(s,\xi)\in\R\times\R^n$.
\end{enumerate}
\indent
Taking into account Proposition~\ref{prop:TK},
the next two results are easy to prove.
\begin{prop}
\label{prop:phi}
For every $\varphi\in\Phi$, the following facts hold:
\begin{enumerate}
\item[$(a)$]
the functions $a^\varphi, b^\varphi$ 
satisfy~\ref{hni} and~\ref{hnii} with the same $p$;
\item[$(b)$]
we have
\[
K^{\varphi} = 
\left\{u\in W^{1,p}_0(\Omega)\cap L^\infty(\Omega):\,\,
\text{$\underline{u}^\varphi\leq \tilde{u}\leq
\overline{u}^\varphi$
\,\,$p$-q.e. in $\Omega$}\right\}
\]
and $\underline{u}^\varphi:\Omega\rightarrow\Rb$
is $p$-quasi upper semicontinuous, while
$\overline{u}^\varphi:\Omega\rightarrow\Rb$
is $p$-quasi lower semicontinuous
(here we agree that $\varphi(-\infty)=-\infty$ and 
$\varphi(+\infty)=+\infty$);
\item[$(c)$]
$u$ is a solution of~\eqref{eq:qevi}
if and only if $u^\varphi$ is a 
solution of the corresponding variational inequality with
$a$, $b$ and $K$ replaced by $a^\varphi$, $b^\varphi$ and 
$K^\varphi$, respectively;
\item[$(d)$]
the pair $(a,b)$ is of Euler-Lagrange type if and only if the pair 
$(a^\varphi,b^\varphi)$ does the same;
moreover, if $L$ is associated with $(a,b)$, then
\[
L^\varphi(x,s,\xi) =
L(x,\varphi(s),\varphi'(s)\xi)
\]
is associated with $(a^\varphi,b^\varphi)$.
\end{enumerate}
\end{prop}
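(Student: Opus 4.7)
My plan is to handle the four assertions in turn, each reducing essentially to the chain rule together with the composition formulas $(a^\varphi)^\psi=a^{\varphi\circ\psi}$ and $(u^\varphi)^\psi=u^{\varphi\circ\psi}$ recorded just above the statement, and with the fact that $\varphi$ and $\varphi^{-1}$ both lie in $\Phi$ and map bounded intervals to bounded intervals. For $(a)$, given $R>0$ I would set $R'=\max_{|s|\leq R}|\varphi(s)|$ and absorb $\varphi'(s)$, $\varphi''(s)$ and $1/\varphi'(s)$ into new constants on $[-R,R]$; feeding the bounds of \ref{hni} at level $R'$ into the definitions of $a^\varphi,b^\varphi$ yields the required growth estimates, the only mildly delicate term being $\varphi''(s)\alpha^{(1)}_{R'}(x)|\xi|$ in $b^\varphi$, which is absorbed into $\alpha^{(0)}_R(x)+\beta_R|\xi|^p$ by Young's inequality with exponents $p,p'$. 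Coercivity follows from
\[
a^\varphi(x,s,\xi)\cdot\xi=a(x,\varphi(s),\varphi'(s)\xi)\cdot\varphi'(s)\xi
\geq\nu_{R'}|\varphi'(s)|^p|\xi|^p-\alpha^{(0)}_{R'}(x),
\]
combined with a uniform lower bound $\varphi'(s)\geq m_R>0$; strict monotonicity \ref{hnii} is preserved by the same identity, since $\eta\mapsto\varphi'(s)\eta$ is bijective. For $(d)$, I would set $L^\varphi(x,s,\xi):=L(x,\varphi(s),\varphi'(s)\xi)$ and verify $\nabla_\xi L^\varphi=a^\varphi$ and $D_sL^\varphi=\varphi'(s)D_sL+\varphi''(s)\nabla_\xi L\cdot\xi=b^\varphi$ by direct chain-rule computation; the converse direction is immediate from $(a^\varphi)^{\varphi^{-1}}=a$.

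For $(b)$, the essential facts are that $\varphi^{-1}\in\Phi$ maps $W^{1,p}_0(\Omega)\cap L^\infty(\Omega)$ into itself (via the chain rule, as $\varphi^{-1}$ is $C^1$ with $\varphi^{-1}(0)=0$), and that $\widetilde{\varphi^{-1}(u)}=\varphi^{-1}(\tilde u)$ $p$-q.e., because composition of a continuous function with a $p$-quasi continuous representative is $p$-quasi continuous and unique modulo $p$-polar sets. Strict monotonicity of $\varphi^{-1}$ then converts the inequalities $\underline u\leq\tilde u\leq\overline u$ into $\underline u^\varphi\leq\widetilde{u^\varphi}\leq\overline u^\varphi$, giving the stated description of $K^\varphi$. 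Preservation of $p$-quasi upper (resp.\ lower) semicontinuity under composition with a continuous strictly increasing function is standard: on a closed set of arbitrarily small $p$-capacity complement where $\underline u$ is ordinary u.s.c., so is $\varphi^{-1}\circ\underline u$.

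For $(c)$, I would invoke Proposition~\ref{prop:TK} on both sides and exchange the admissible test directions through the substitution $z=\varphi'(w)v$, where $w=u^\varphi$, so that $u=\varphi(w)$, $\nabla u=\varphi'(w)\nabla w$, and $\nabla z=\varphi''(w)v\nabla w+\varphi'(w)\nabla v$. A direct expansion using the definitions of $a^\varphi,b^\varphi$ yields the pointwise identity
\[
a^\varphi(x,w,\nabla w)\cdot\nabla v+b^\varphi(x,w,\nabla w)\,v
= a(x,u,\nabla u)\cdot\nabla z+b(x,u,\nabla u)\,z
\]
a.e.\ in $\Omega$, so the two variational inequalities coincide. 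The map $v\mapsto z=\varphi'(w)v$ is a linear bijection of $W^{1,p}_0(\Omega)\cap L^\infty(\Omega)$ onto itself carrying $T_wK^\varphi$ onto $T_uK$: the sign constraints match because $\varphi'(w)>0$, and the coincidence sets satisfy $\{\widetilde{u^\varphi}=\underline u^\varphi\}=\{\tilde u=\underline u\}$ and $\{\widetilde{u^\varphi}=\overline u^\varphi\}=\{\tilde u=\overline u\}$ $p$-q.e.\ by injectivity of $\varphi^{-1}$.

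The main obstacle, and the only step that is not purely mechanical, is the interplay between the pointwise composition with $\varphi^{\pm1}$ and the $p$-capacity/$p$-quasi continuity machinery in $(b)$ and $(c)$: identifying the quasi continuous representative of $\varphi^{-1}(u)$ with $\varphi^{-1}(\tilde u)$, and checking that the coincidence sets of $\tilde u$ with the obstacles are transported bijectively by $\varphi^{-1}$ up to $p$-polar sets. Both rest on standard but nontrivial facts from~\cite{dalmaso1983}. Once these identifications are in place, the proposition collapses to the chain-rule computations sketched above.
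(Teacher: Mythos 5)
Your proposal is correct and follows exactly the route the paper intends: the paper offers no written proof, remarking only that ``taking into account Proposition~\ref{prop:TK}, the next two results are easy to prove,'' and your argument does precisely this, reducing~$(c)$ to the tangent-cone characterization of Proposition~\ref{prop:TK} via the substitution $z=\varphi'(w)v$, and handling $(a)$, $(b)$, $(d)$ by chain-rule computations together with the stability of $p$-quasi continuity under composition with the continuous increasing map $\varphi^{-1}$. The delicate points you flag (identifying $\widetilde{\varphi^{-1}(u)}$ with $\varphi^{-1}(\tilde u)$ $p$-q.e.\ and transporting coincidence sets) are exactly where the content lies, and you resolve them correctly.
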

\begin{prop}
\label{prop:theta}
For every $\vartheta\in\Theta$, the following facts hold:
\begin{enumerate}
\item[$(a)$]
the functions 
$a_\vartheta, b_\vartheta$ 
satisfy~\ref{hni} and~\ref{hnii} with the same $p$;
\item[$(b)$]
$u$ is a solution of~\eqref{eq:qevi}
if and only if the same $u$ is a 
solution of the corresponding variational inequality with
$a$ and $b$ replaced by $a_\vartheta$ and $b_\vartheta$,
respectively.
\end{enumerate}
\end{prop}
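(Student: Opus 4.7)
The plan is to prove $(a)$ by growth/monotonicity bookkeeping and $(b)$ by recognising the transformation as a multiplication of the test function by $\vartheta(u)$, which preserves $T_uK$.

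For $(a)$, since $\vartheta\in C^1(\R;(0,+\infty))$, on each interval $[-R,R]$ the function $\vartheta$ is bounded and bounded away from zero, and $\vartheta'$ is bounded. The bound on $|a_\vartheta|$, the coercivity
\[
a_\vartheta(x,s,\xi)\cdot\xi\ge (\min_{[-R,R]}\vartheta)\,\nu_R\,|\xi|^p-(\max_{[-R,R]}\vartheta)\,\alpha_R^{(0)}(x)\,,
\]
and the strict monotonicity~\ref{hnii} then follow at once from~\ref{hni}--\ref{hnii} for $(a,b)$, the positive prefactor $\vartheta(s)$ preserving both signs. The only nontrivial term in the bound for $|b_\vartheta|$ is $\vartheta'(s)\,a(x,s,\xi)\cdot\xi$; using $|a|\le\alpha_R^{(1)}(x)+\beta_R|\xi|^{p-1}$ together with Young's inequality converts it into the structural form $L^1(\Omega)+C|\xi|^p$ required by~\ref{hni}.

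For $(b)$, the key algebraic identity is
\[
a_\vartheta(x,u,\nabla u)\cdot\nabla v+b_\vartheta(x,u,\nabla u)\,v
=a(x,u,\nabla u)\cdot\nabla(\vartheta(u)v)+b(x,u,\nabla u)\,\vartheta(u)v\,,
\]
immediate from the definitions of $a_\vartheta,b_\vartheta$ and the product rule $\nabla(\vartheta(u)v)=\vartheta'(u)v\nabla u+\vartheta(u)\nabla v$. Since $u\in L^\infty(\Omega)$, the function $\vartheta(u)$ lies in $W^{1,p}(\Omega)\cap L^\infty(\Omega)$ and is bounded away from zero, so $v\mapsto \vartheta(u)v$ is a bijection of $W^{1,p}_0(\Omega)\cap L^\infty(\Omega)$ onto itself with inverse $w\mapsto w/\vartheta(u)$. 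Because $\vartheta(\tilde u)$ is a strictly positive $p$-quasi continuous representative of $\vartheta(u)$ (and $\tilde u$ is finite $p$-q.e.\ on the coincidence sets $\{\tilde u=\underline u\}$ and $\{\tilde u=\overline u\}$), the product $\vartheta(\tilde u)\tilde v$ is a $p$-quasi continuous representative of $\vartheta(u)v$ whose sign agrees with that of $\tilde v$ $p$-q.e., so this bijection preserves $T_uK$. Since the set $K$ itself is independent of $\vartheta$, Proposition~\ref{prop:TK} applied in both formulations converts each variational inequality into the other through this bijection, yielding the equivalence.

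The only mildly delicate point is the $p$-quasi continuous representative argument ensuring that the one-sided constraints defining $T_uK$ on the coincidence sets are preserved under multiplication by $\vartheta(u)$; everything else is routine calculus and integrability bookkeeping.
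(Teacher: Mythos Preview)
Your proposal is correct and follows exactly the route the paper indicates: the paper's own proof is just the sentence ``Taking into account Proposition~\ref{prop:TK}, the next two results are easy to prove,'' and your argument is precisely the expected unpacking of that remark---part~$(a)$ is bookkeeping, and part~$(b)$ goes through the identity $a_\vartheta\cdot\nabla v+b_\vartheta v=a\cdot\nabla(\vartheta(u)v)+b\,\vartheta(u)v$ together with the fact that multiplication by the positive bounded factor $\vartheta(u)$ is a bijection of $T_uK$, so that Proposition~\ref{prop:TK} transfers the inequality. Your extra care with the $p$-quasi continuous representatives on the coincidence sets is appropriate and there is no gap.
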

\begin{rem}
Let $\varphi\in\Phi$ with $\varphi'$ nonconstant, 
let $w\in L^2(\Omega)$ and let
$a(x,s,\xi)=\varphi'(s)\xi$, $b(x,s,\xi) = -w(x)$.
\par
Then $(a,b)$ is not of Euler-Lagrange type.
However, if we take $\vartheta(s)=\varphi'(s)$, then
$(a_\vartheta,b_\vartheta)$, which is given by
\[
a_{\vartheta}\left(x,s,\xi\right)=
[\varphi'(s)]^2 \xi\,,\qquad
b_{\vartheta}\left(x,s,\xi\right)=
\varphi'(s) \varphi''(s) |\xi|^2 
- w(x) \varphi'(s) \,,
\]
is of Euler-Lagrange type with
\[
L(x,s,\xi) = \frac{1}{2}\,[\varphi'(s)]^2 |\xi|^2 
- w(x)\varphi(s)\,.
\]
Therefore the property of being of Euler-Lagrange
type is not invariant under the transformation 
induced by $\vartheta$, which plays in fact the role
of ``integrating factor''.
By the way, if then we take $\psi=\varphi^{-1}$, we get
\[
(a_\vartheta)^\psi(x,s,\xi) = \xi\,,\qquad
(b_\vartheta)^\psi(x,s,\xi) = - w(x)\,,
\]
which are simply related to 
\[
L(x,s,\xi) = \frac{1}{2}\,|\xi|^2 - w(x) s\,.
\]
\end{rem}
\begin{prop}
\label{prop:signgen}
For every $R>0$ there exist $\vartheta_1, \vartheta_2\in\Theta$
and $c>0$, depending only on $R$, $\beta_R$ and $\nu_R$, such that
\begin{alignat*}{3}
&b_{\vartheta_1}(x,s,\xi) &&\leq
c\, \alpha_R^{(0)}(x) \,,\\
&b_{\vartheta_2}(x,s,\xi) &&\geq
- c\, \alpha_R^{(0)}(x)  \,,
\end{alignat*}
for a.e. $x\in\Omega$ and every $s\in\R$, $\xi\in\R^n$
with $|s|\leq R$.
\end{prop}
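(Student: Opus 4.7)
The strategy is to use exponential \emph{integrating factors} of the form $\vartheta(s)=e^{\pm\lambda s}$, with $\lambda$ depending only on $R$, $\beta_R$, $\nu_R$. The observation is that in the formula
\[
b_\vartheta(x,s,\xi) = \vartheta'(s)\,a(x,s,\xi)\cdot\xi + \vartheta(s)\,b(x,s,\xi),
\]
the coercivity bound $a(x,s,\xi)\cdot\xi \geq \nu_R|\xi|^p - \alpha^{(0)}_R(x)$ and the natural growth bound $|b(x,s,\xi)| \leq \alpha^{(0)}_R(x)+\beta_R|\xi|^p$ have $|\xi|^p$-coefficients that can be made to cancel, provided $\vartheta'/\vartheta$ is tuned to the ratio $\beta_R/\nu_R$ and has the correct sign.

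For $\vartheta_1$, I would set $\vartheta_1(s)=e^{-\lambda s}$ with $\lambda=\beta_R/\nu_R$, so that $\vartheta_1>0$, $\vartheta_1\in C^1(\R)$, and $\vartheta_1'(s)=-\lambda\vartheta_1(s)\leq 0$. Multiplying the coercivity inequality by the nonpositive number $\vartheta_1'(s)$ reverses it, and combining with $\vartheta_1(s)b(x,s,\xi)\leq \vartheta_1(s)\bigl(\alpha^{(0)}_R(x)+\beta_R|\xi|^p\bigr)$ yields
\[
b_{\vartheta_1}(x,s,\xi)
\leq \bigl[\vartheta_1'(s)\nu_R+\vartheta_1(s)\beta_R\bigr]|\xi|^p
+ \bigl[|\vartheta_1'(s)|+\vartheta_1(s)\bigr]\alpha^{(0)}_R(x).
\]
The bracket on $|\xi|^p$ vanishes by the choice of $\lambda$, and the other bracket is bounded on $|s|\leq R$ by $(\lambda+1)e^{\lambda R}$, giving the desired constant $c$.

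For $\vartheta_2$ I would take symmetrically $\vartheta_2(s)=e^{\lambda s}$, again with $\lambda=\beta_R/\nu_R$, so that $\vartheta_2'(s)=\lambda\vartheta_2(s)\geq 0$. Then multiplying the coercivity inequality by $\vartheta_2'(s)\geq 0$ preserves it, and using $\vartheta_2(s)b(x,s,\xi)\geq -\vartheta_2(s)\bigl(\alpha^{(0)}_R(x)+\beta_R|\xi|^p\bigr)$ gives
\[
b_{\vartheta_2}(x,s,\xi)
\geq \bigl[\vartheta_2'(s)\nu_R-\vartheta_2(s)\beta_R\bigr]|\xi|^p
- \bigl[\vartheta_2'(s)+\vartheta_2(s)\bigr]\alpha^{(0)}_R(x),
\]
where again the $|\xi|^p$-bracket is zero and the remaining bracket is bounded on $|s|\leq R$ by the same constant $(\lambda+1)e^{\lambda R}$. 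There is no real obstacle here; the only nontrivial point is recognizing the correct ansatz $\vartheta=e^{\pm(\beta_R/\nu_R)s}$, which makes the structural cancellation between the ellipticity of $a$ and the natural-growth bound on $b$ happen exactly.
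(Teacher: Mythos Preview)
Your proof is correct and follows essentially the same approach as the paper: the paper also takes $\vartheta(s)=\exp(\gamma s)$ (with $\gamma\nu_R\geq\beta_R$, of which your choice $\lambda=\beta_R/\nu_R$ is the borderline case) to establish the lower bound for $b_{\vartheta_2}$, and remarks that the upper bound for $b_{\vartheta_1}$ is analogous. The resulting constant $(\lambda+1)e^{\lambda R}$ coincides with the paper's $(\gamma+1)\exp(\gamma R)$.
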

\begin{proof}
If we set $\vartheta(s) = \exp(\gamma s)$ with 
$\gamma\, \nu_R\geq\beta_R$, we have
\[
\begin{split}
b_\vartheta(x,s,\xi) &= 
\left[\gamma\,a(x,s,\xi)\cdot\xi + b(x,s,\xi)
\right]\exp(\gamma s)\\
&\geq
\left[\gamma\,\nu_R\,|\xi|^{p} - \gamma\,\alpha^{(0)}_R(x)
- \alpha^{(0)}_R(x) - \beta_R\,|\xi|^{p}
\right]\exp(\gamma s) \\
&\geq
- (\gamma+1) \exp(\gamma R)\,\alpha^{(0)}_R(x)\,,
\end{split}
\]
whence the existence of $\vartheta_2$.
The existence of $\vartheta_1$ can be proved in a similar way.
\end{proof}


\section{Quasilinear elliptic variational inequalities with
natural growth conditions depending on a parameter}
\label{sect:qevinp}
Again, let $\Omega$ be a bounded and open subset of $\R^n$
and let now $T$ be a metrizable topological space and
\begin{align*}
& a:\Omega\times
\left(\R\times\R^n\times T\right)\rightarrow \R^n\,,\\
\noalign{\medskip}
& b:\Omega\times
\left(\R\times\R^n\times T\right)\rightarrow \R
\end{align*}
be two Carath\'eodory functions 
satisfying~\ref{hni} and~\ref{hnii} uniformly
with respect to $t\in T$.
\par
More precisely, we assume that $a_t$ and $b_t$
satisfy~\ref{um} and:
\emph{
\begin{enumerate}
\item[\mylabel{uni}{\uni}]
there exist $p\in]1,\infty[$ and, for every $R>0$,
$\alpha^{(0)}_R\in L^1(\Omega)$,
$\alpha^{(1)}_R\in L^{p'}(\Omega)$, $\beta_R > 0$ 
and $\nu_R>0$ such that
\begin{align*}
& |a_t(x,s,\xi)| \leq
\alpha^{(1)}_R(x) + \beta_R\,|\xi|^{p-1}\,,\\
\noalign{\medskip}
& |b_t(x,s,\xi)| \leq
\alpha^{(0)}_R(x) + \beta_R\,|\xi|^{p}\,,\\
\noalign{\medskip}
& a_t(x,s,\xi)\cdot\xi \geq
\nu_R\,|\xi|^{p} - \alpha^{(0)}_R(x)\,,
\end{align*}
for a.e. $x\in\Omega$ and every $s\in\R$, $\xi\in\R^n$
and $t\in T$ with $|s|\leq R$;
again, such a $p$ is clearly unique.
\end{enumerate}
}
Then we can define a map
\[
H:\left[W^{1,p}_0(\Omega)\cap L^{\infty}(\Omega)\right]\times T
\rightarrow \left[W^{-1,p'}(\Omega) + L^1(\Omega)\right]
\]
by
\[
H_t(u) = - \mathrm{div}\left[a_t(x,u,\nabla u)\right]
+ b_t(x,u,\nabla u) \,.
\]
Consider also, for each $t\in T$, a $p$-quasi upper 
semicontinuous function $\underline{u}_t:\Omega\rightarrow\Rb$
and a $p$-quasi lower semicontinuous function 
$\overline{u}_t:\Omega\rightarrow\Rb$, set
\[
K_t=\left\{u\in W^{1,p}_0(\Omega)\cap L^{\infty}(\Omega):
\,\,\text{$\underline{u}_t\leq \tilde{u}\leq\overline{u}_t$
\,\,$p$-q.e. in $\Omega$}\right\}
\]
and assume the following form of continuity related to the 
Mosco-convergence:
\emph{
\begin{enumerate}
\item[\mylabel{mc}{\mc}]
for every sequence $(t_k)$ convergent to $t$ in $T$, 
the following facts hold:
\begin{itemize}
\item
if $(u_k)$ is a sequence weakly convergent to $u$ in 
$W^{1,p}_0(\Omega)$, with $u_k\in K_{t_k}$ for any $k\in\N$ 
and $(u_k)$ bounded in~$L^\infty(\Omega)$, then $u\in K_t$;
\item
for every $u\in K_t$ there exist $\overline{k}\in\N$ and a 
sequence $(u_k)$ in $W^{1,p}_0(\Omega)\cap L^{\infty}(\Omega)$
which is bounded in $L^\infty(\Omega)$ and
strongly convergent to $u$ in $W^{1,p}_0(\Omega)$, 
with $u_k\in K_{t_k}$ for any $k\geq \overline{k}$.
\end{itemize}
\end{enumerate}
}
Then consider the parametric variational inequality
\begin{equation}
\label{eq:qevim}
\tag{$PVI$}
\begin{cases}
(u,t)\in [W^{1,p}_0(\Omega)\cap L^{\infty}(\Omega)]\times T\,,\\
\noalign{\medskip}
u\in K_t\,, \\
\noalign{\medskip}
\displaystyle{
\int_{\Omega}
\bigl[a_t(x,u,\nabla u)\cdot\nabla (v-u) +
b_t(x,u,\nabla u)\,(v-u)\bigr]\,dx \geq 0}\\
\noalign{\medskip}
\qquad\qquad\qquad\qquad\qquad\qquad\qquad\qquad\qquad\qquad
\text{for every $v\in K_t$}\,.
\end{cases}
\end{equation}
\begin{thm}
\label{thm:proper}
Let $(u_k,t_k)$ be a sequence of solutions of~\eqref{eq:qevim}
with $(u_k)$ bounded in $L^{\infty}(\Omega)$ and $(t_k)$ 
convergent to some $t$ in $T$ with $K_t\neq\emptyset$.
\par
Then $(u_k)$ admits a subsequence strongly convergent in
$W^{1,p}_0(\Omega)$ to some $u$ and $(u,t)$ is a
solution of~\eqref{eq:qevim}.
\end{thm}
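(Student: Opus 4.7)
The plan is to proceed in four steps: (i) establish a uniform $W^{1,p}_0(\Omega)$-bound on $(u_k)$; (ii) extract a weak limit $u$ and show $u\in K_t$; (iii) upgrade weak to strong $W^{1,p}_0$-convergence; and (iv) pass to the limit in the parametric variational inequality. I expect the third step to be the main obstacle: the natural growth of $b$ blocks any direct appeal to Theorem~\ref{thm:controllable}, and $u$ is not in general an admissible test function, since it may not belong to $K_{t_k}$.

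For step~(i), let $R:=\sup_k\|u_k\|_\infty$. Pick some $u_0\in K_t$ and, by \ref{mc}, a sequence $\hat u_k\in K_{t_k}$, bounded in $L^\infty(\Omega)$, with $\hat u_k\to u_0$ strongly in $W^{1,p}_0(\Omega)$. To bypass the natural growth of $b$ when testing, I would apply Propositions~\ref{prop:signgen} and~\ref{prop:theta} (pointwise in $t$) to produce $\vartheta_1,\vartheta_2\in\Theta$ such that $u_k$ also solves the variational inequality with $(a_{\vartheta_i},b_{\vartheta_i})$, with $b_{\vartheta_1,t}\le c\,\alpha^{(0)}_R$ and $b_{\vartheta_2,t}\ge -c\,\alpha^{(0)}_R$ on $\{|s|\le R\}$. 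Testing the $\vartheta_1$-inequality with $\max(u_k,\hat u_k)\in K_{t_k}$ and the $\vartheta_2$-inequality with $\min(u_k,\hat u_k)\in K_{t_k}$, on each set the $b$-term is dominated by $c\int\alpha_R^{(0)}|\hat u_k-u_k|$ and hence is bounded; the coercivity of $a_{\vartheta_i}$ and Young's inequality absorb the gradient term, yielding $\|\nabla u_k\|_p\le C$. Step~(ii) then follows by extracting a subsequence with $u_k\rightharpoonup u$ in $W^{1,p}_0(\Omega)$, $u_k\to u$ in $L^p(\Omega)$ and a.e., $\|u\|_\infty\le R$; the first half of \ref{mc} ensures $u\in K_t$.

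For the delicate step~(iii), I use \ref{mc} to select $\tilde u_k\in K_{t_k}$, bounded in $L^\infty$, with $\tilde u_k\to u$ strongly in $W^{1,p}_0(\Omega)$, and rerun the split-sign construction of step~(i) with $\hat u_k$ replaced by $\tilde u_k$. Inserting $\pm a_{\vartheta_i,t_k}(x,u_k,\nabla u)$, and using the strong $L^p$-convergence of $\nabla\tilde u_k$ to $\nabla u$ together with the strong $L^{p'}$-convergence of $a_{\vartheta_i,t_k}(x,u_k,\nabla u)$ to $a_{\vartheta_i,t}(x,u,\nabla u)$ (by dominated convergence), one obtains
\[
\int_\Omega\bigl[a_{t_k}(x,u_k,\nabla u_k)-a_{t_k}(x,u_k,\nabla u)\bigr]\cdot(\nabla u_k-\nabla u)\,dx\longrightarrow 0\,,
\]
after dividing by $\vartheta_i(u_k)\ge m>0$ on $[-R,R]$ and summing the contributions on $\{\tilde u_k>u_k\}$ (from $\vartheta_1$) and $\{\tilde u_k\le u_k\}$ (from $\vartheta_2$). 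Strict monotonicity~\ref{hnii} and a standard monotonicity/Leray--Lions argument then give $\nabla u_k\to\nabla u$ a.e., and the equi-integrability provided by the coercivity and the convergence of $\int a_{t_k}(x,u_k,\nabla u_k)\cdot\nabla u_k$ upgrades this to strong convergence in $L^p(\Omega)$.

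Step~(iv) is routine: given $v\in K_t$, \ref{mc} yields $v_k\in K_{t_k}$, bounded in $L^\infty$ and strongly convergent to $v$ in $W^{1,p}_0(\Omega)$; the strong convergence of $(u_k)$ in $W^{1,p}_0$ together with \ref{uni} gives $a_{t_k}(x,u_k,\nabla u_k)\to a_t(x,u,\nabla u)$ in $L^{p'}(\Omega)$ and $b_{t_k}(x,u_k,\nabla u_k)\to b_t(x,u,\nabla u)$ in $L^1(\Omega)$, which lets one pass to the limit in the variational inequality for $u_k$ tested with $v_k$ and conclude.
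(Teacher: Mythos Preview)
Your strategy is sound and runs largely parallel to the paper's, but you organize steps~(i) and~(iii) differently and more self-containedly. For the $W^{1,p}_0$-bound, the paper first translates by a recovery sequence so that $0\in K_{t_k}$ and then, after applying Proposition~\ref{prop:signgen}, tests with $v=-u_k^-$ and $v=u_k^+$; your use of $\max(u_k,\hat u_k)$ and $\min(u_k,\hat u_k)$ achieves the same effect without the preliminary translation. For the strong convergence, the paper truncates in $s$ via a cut-off $\psi$ so that the modified $\check a_t,\check b_t$ also satisfy the hypotheses of~\cite[Theorem~4.2]{almi_degiovanni2013} (coercivity and natural growth now uniform in $s$), and then repeats the proof of that external result, based on the exponential-type test function $u_k-\tau\varphi(u_k-z_k)$. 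Your route stays within the paper's own Propositions~\ref{prop:theta} and~\ref{prop:signgen} plus a standard Leray--Lions monotonicity argument, which avoids the external reference; this is arguably cleaner.

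One technical point in step~(iii) needs care. After reaching
\[
\int_{\{\tilde u_k>u_k\}}a_{\vartheta_1,t_k}(x,u_k,\nabla u_k)\cdot(\nabla u_k-\nabla\tilde u_k)\,dx\le o(1)
\]
you insert $\pm a_{\vartheta_1,t_k}(x,u_k,\nabla u)$ and claim the residual term vanishes because $a_{\vartheta_1,t_k}(x,u_k,\nabla u)$ converges strongly in $L^{p'}$. But the integration is over the $k$-dependent set $\{\tilde u_k>u_k\}$, whose characteristic function need not converge a.e., so pairing $\chi_{\{\tilde u_k>u_k\}}a_{\vartheta_1,t_k}(x,u_k,\nabla u)$ with the merely weakly convergent $\nabla u_k-\nabla u$ is not directly justified. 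The fix is easy: insert $a_{\vartheta_1,t_k}(x,u_k,\nabla\tilde u_k)$ instead, so that the residual becomes
\[
-\int_\Omega a_{\vartheta_1,t_k}(x,u_k,\nabla\tilde u_k)\cdot\nabla(\tilde u_k-u_k)^+\,dx\,,
\]
a pairing of a strongly $L^{p'}$-convergent sequence against $\nabla(\tilde u_k-u_k)^+\rightharpoonup 0$ in $L^p$; the $k$-dependent set has been absorbed into a genuine Sobolev function. This yields the monotonicity condition with $\nabla\tilde u_k$ in place of $\nabla u$, and passing from one to the other afterwards is routine since $\nabla\tilde u_k\to\nabla u$ strongly in $L^p$.
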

\begin{proof}
Let $w\in K_t$ and let $(w_k)$ be a sequence strongly convergent 
to $w$ in $W^{1,p}_0(\Omega)$, with $w_k\in K_{t_k}$ for any $k\in\N$ 
and $(w_k)$ bounded in $L^\infty(\Omega)$.
If we set
\begin{gather*}
\widehat{T} = \N\cup\{\infty\}\,,\\
\hat{a}_k(x,s,\xi) = a_{t_k}(x,w_k(x)+s,\nabla w_k(x)+\xi) \,,\\
\hat{a}_\infty(x,s,\xi) = a_{t}(x,w(x)+s,\nabla w(x)+\xi) \,,\\
\hat{b}_k(x,s,\xi) = b_{t_k}(x,w_k(x)+s,\nabla w_k(x)+\xi) \,,\\
\hat{b}_\infty(x,s,\xi) = b_{t}(x,w(x)+s,\nabla w(x)+\xi) \,,\\
\hat{\underline{u}}_k = \underline{u}_{t_k} - w_k\,,\qquad
\hat{\underline{u}}_\infty = \underline{u}_{t} - w\,,\\
\hat{\overline{u}}_k = \overline{u}_{t_k} - w_k\,,\qquad
\hat{\overline{u}}_\infty = \overline{u}_{t} - w\,,\\
\hat{u}_k = u_k - w_k\,,
\end{gather*}
and define $\widehat{K}_k$, $\widehat{K}_\infty$ accordingly, 
it is easily seen that all the assumptions are still
satisfied and now $0\in \widehat{K}_{k}$.
Therefore, we may assume without loss of generality that
$0\in K_{t_k}$ for any $k\in\N$.
\par
Let $R>0$ be such that $\|u_k\|_\infty\leq R$ for any $k\in\N$.
We claim that $(u_k^+)$ is bounded in~$W^{1,p}_0(\Omega)$.
Actually, by Propositions~\ref{prop:theta} 
and~\ref{prop:signgen}, we may assume,
without loss of generality, that
\[
b_t(x,s,\xi) \geq - c \,\alpha_R^{(0)}(x) 
\qquad\text{whenever $|s|\leq R$}\,.
\]
Then the choice $v=-u_k^-$ in~\eqref{eq:qevim} yields
\begin{multline*}
0 \geq 
\int_{\Omega}
\bigl[{a}_{t_k}(x,u_k^+,\nabla u_k^+)\cdot\nabla u_k^+ +
{b}_{t_k}(x,u_k^+,\nabla u_k^+)\,u_k^+\bigr]\,dx \\
\geq
\nu_R \int_\Omega |\nabla u_k^+|^p\,dx 
- \int_\Omega \alpha_R^{(0)}\,dx 
- c \int_\Omega \alpha_R^{(0)}\, u_k^+\,dx\,,
\end{multline*}
which implies that $(u_k^+)$ is bounded in $W^{1,p}_0(\Omega)$.
\par
In a similar way one finds that 
$(u_k^-)$ is bounded in $W^{1,p}_0(\Omega)$,
so that $(u_k)$ is weakly convergent, up to a subsequence,
to some $u$ in $W^{1,p}_0(\Omega)$ and $u\in K_t$.
Let $(z_k)$ be a sequence strongly convergent to $u$ in 
$W^{1,p}_0(\Omega)$, with $z_k\in K_{t_k}$ for any $k\in\N$ and 
$(z_k)$ bounded in $L^\infty(\Omega)$.
\par
Let $\psi:\R\rightarrow[0,1]$ be a continuous function
such that $\psi(s)=1$ for $|s|\leq R$ and $\psi(s) = 0$
for $|s|\geq R+1$.
Then, let 
\begin{alignat*}{3}
&\check{a}_t(x,s,\xi) &&= 
\psi(s)\,a_t(x,s,\xi) + (1-\psi(s))\,|\xi|^{p-2}\xi\,,\\
&\check{b}_t(x,s,\xi) &&= \psi(s)\,b_t(x,s,\xi) \,.
\end{alignat*}
It is easily seen that each $(u_k,t_k)$ is also a solution
of~\eqref{eq:qevim} with $a_t$ and $b_t$ replaced by
$\check{a}_t$ and~$\check{b}_t$.
Moreover, $\check{a}_t$ and $\check{b}_t$ satisfy both
the assumptions~\ref{uni}, \ref{um} and the assumptions
of~\cite[Theorem~4.2]{almi_degiovanni2013}.
In particular, there exist $\alpha^{(0)}\in L^1(\Omega)$,
$\beta > 0$ and $\nu>0$ such that
\[
\check{a}_t(x,s,\xi)\cdot\xi \geq
\nu\,|\xi|^{p} - \alpha^{(0)}(x)\,,
\qquad
|\check{b}_t(x,s,\xi)| \leq
\alpha^{(0)}(x) + \beta\,|\xi|^{p}
\]
for a.e. $x\in\Omega$ and every $s\in\R$, $\xi\in\R^n$ and $t\in T$.
\par
Now, if $p<n$, the proof 
of~\cite[Theorem~4.2]{almi_degiovanni2013}
can be repeated in a simplified
form, as $(u_k)$ is bounded in $L^\infty(\Omega)$.
We have only to observe that, if
$\varphi:\R\rightarrow\R$ is the solution of
\[
\left\{
\begin{array}{l}
\varphi'(s) = 1 + \dfrac{\beta}{\nu}\,|\varphi(s)|\,,\\
\noalign{\medskip}
\varphi(0)=0\,,
\end{array}
\right.
\]
then there exists $\tau>0$ such that
\[
0\leq \tau s\varphi(s) \leq s^2
\qquad\text{whenever $|s|\leq R+\sup_k\,\|z_k\|_\infty$}\,, 
\]
so that
\[
u_k - \tau\,\varphi(u_k-z_k)\in K_{t_k}
\qquad\text{for any $k\in\N$}\,.
\]
It follows
\[
\int_{\Omega}
\bigl[\check{a}_{t_k}(x,u_k,\nabla u_k)\cdot
\nabla (\varphi(u_k-z_k)) +
\check{b}_{t_k}(x,u_k,\nabla u_k)\,\varphi(u_k-z_k)\bigr]\,dx 
\leq 0
\]
and now the proof of~\cite[Theorem~4.2]{almi_degiovanni2013}
can be repeated with minor modifications,
showing that $(u_k)$ is strongly convergent to $u$
in $W^{1,p}_0(\Omega)$.
If $p\geq n$, the argument is similar and simpler.
\par
It is easily seen that $(u,t)$ is a
solution of~\eqref{eq:qevim}.
\end{proof}
\begin{rem}
In the previous theorem the assumption $K_t\neq\emptyset$
is crucial to ensure that $(u_k)$ is bounded in
$W^{1,p}_0(\Omega)$.
\par
Consider $\Omega=]0,1[$, $a_t(x,s,\xi)=\xi$,
$b_t(x,s,\xi)=0$, $\underline{u}_t=z-t$ and
$\overline{u}_t=z+t$, where 
$z\in C_c(]0,1[)\setminus W^{1,2}_0(]0,1[)$.
\par
If $t_k\to 0$ with $t_k>0$ and $(u_k,t_k)$ are
the solutions of~\eqref{eq:qevim}, then 
the sequence $(u_k)$ is unbounded in $W^{1,2}_0(\Omega)$.
\end{rem}


\section{Topological degree for quasilinear elliptic variational
inequalities with natural growth conditions}
\label{sect:tdvi}
Consider again the setting of Section~\ref{sect:qevin}.
Throughout this section, we also assume that:
\emph{
\begin{enumerate}
\item[\mylabel{hniii}{\hniii}]
the functions $\underline{u}$ and $\overline{u}$
are bounded.
\end{enumerate}
}
It follows that $Z^{tot}$ is automatically bounded in
$L^\infty(\Omega)$.
\begin{thm}
\label{thm:compact}
The set $Z^{tot}$ is (strongly) compact in 
$W^{1,p}_0(\Omega)$ (possibly empty).
\end{thm}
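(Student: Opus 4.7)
\medskip

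\noindent\textbf{Plan of proof.} The statement is an immediate consequence of Theorem~\ref{thm:proper}, applied in the degenerate case of a constant parameter. The only nontrivial content, namely the strong compactness of $W^{1,p}_0$-weakly convergent sequences of solutions in the presence of the natural growth term $b(x,u,\nabla u)$, has already been absorbed into that result; the assumption~\ref{hniii} is what makes that theorem applicable here.

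The plan is as follows. If $Z^{tot}=\emptyset$ there is nothing to prove. Otherwise, take any sequence $(u_k)$ in $Z^{tot}$. First I would observe that, because $u_k\in K$, one has
\[
\underline{u}\leq \tilde{u}_k\leq \overline{u}\qquad\text{$p$-q.e. in $\Omega$}\,,
\]
so that by~\ref{hniii} the sequence $(u_k)$ is bounded in $L^\infty(\Omega)$ by a constant depending only on $\|\underline{u}\|_\infty$ and $\|\overline{u}\|_\infty$. This is the only use of~\ref{hniii} and it is the reason the $L^\infty$-hypothesis of Theorem~\ref{thm:proper} is automatically met.

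Next I would cast $(VI)$ as a special instance of $(PVI)$ by choosing $T=\{t_0\}$ a singleton, $a_{t_0}=a$, $b_{t_0}=b$, $\underline{u}_{t_0}=\underline{u}$, $\overline{u}_{t_0}=\overline{u}$, and accordingly $K_{t_0}=K$. Assumptions~\ref{uni} and~\ref{um} reduce to~\ref{hni} and~\ref{hnii}, while the Mosco-type condition~\ref{mc} is trivially satisfied, since one can take the constant sequences $u_k=u$. Taking the constant sequence $t_k=t_0$, convergent to $t_0$, and noting that $K=K_{t_0}\neq\emptyset$ because $Z^{tot}\neq\emptyset$, Theorem~\ref{thm:proper} applies to $(u_k,t_k)$ and yields a subsequence which converges strongly in $W^{1,p}_0(\Omega)$ to some $u$, with $(u,t_0)$ again a solution of $(PVI)$, i.e.\ $u\in Z^{tot}$.

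This shows that $Z^{tot}$ is sequentially compact in the (metrizable) norm topology of $W^{1,p}_0(\Omega)$, hence compact. The only conceptual work is the reduction to Theorem~\ref{thm:proper}; the substantial obstacle, namely upgrading weak to strong convergence despite the $|\nabla u|^p$-growth of $b$, is not addressed again here, as it has been handled inside the proof of Theorem~\ref{thm:proper} by the truncation/test-function device based on the auxiliary function $\varphi$ with $\varphi'=1+(\beta/\nu)|\varphi|$.
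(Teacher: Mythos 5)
Your proof is correct and takes essentially the same approach as the paper: reduce to Theorem~\ref{thm:proper} (with a trivial parameter set) after noting that~\ref{hniii} yields the needed $L^\infty$-bound, and dispose of the empty case separately. The paper phrases the empty case as $K=\emptyset\Rightarrow Z^{tot}=\emptyset$, while you argue $Z^{tot}\neq\emptyset\Rightarrow K\neq\emptyset$, but these are the same observation.
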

\begin{proof}
If $K=\emptyset$, we have $Z^{tot}=\emptyset$.
Otherwise the assertion follows from Theorem~\ref{thm:proper}.
\end{proof}
\begin{defn}
We denote by $\mathcal{Z}(F,K)$
the family of the subsets $Z$ of $Z^{tot}$
which are both open and closed in $Z^{tot}$ with respect to the 
$W^{1,p}_0(\Omega)$-topology.
We will simply write~$\mathcal{Z}$, if no confusion can arise.
\end{defn}
Fix a continuous function $\psi:\R\rightarrow[0,1]$ such that
\[
\psi(s)=1 \quad\text{for $s\leq 1$}\,,\qquad
\psi(s)=0 \quad\text{for $s\geq 2$}\,,
\]
then set, for any $t\in [0,1]$ and $s\in\R$,
\[
\Psi_{t}(s)=\psi(t |s|)\,s\,.
\]
If we consider $T=[0,1]$ and define
\begin{alignat*}{3}
& a_t^\psi(x,s,\xi) = \psi(t |s|)\,a(x,s,\xi)
+ [1-\psi(t |s|)]\,|\xi|^{p-2}\xi\,,\\
& b_t^\psi(x,s,\xi) = \Psi_{t}\left(b(x,s,\xi)\right)\,,
\end{alignat*}
it is easily seen that $a_t^\psi, b_t^\psi$ 
satisfy~\ref{uni} and~\ref{um}.
Moreover, for every $\underline{t}\in]0,1[$, they 
satisfy~\ref{uc},
if $t$ is restricted to $[\underline{t},1]$.
In particular, we can define a continuous map
$H^\psi:W^{1,p}_0(\Omega)\times ]0,1]\rightarrow W^{-1,p'}(\Omega)$
by
\[
H_t^\psi(u) = - \mathrm{div}\left[a_t^\psi(x,u,\nabla u)\right]
+ b_t^\psi(x,u,\nabla u)
\]
and, by Theorem~\ref{thm:controllable},
this map is of class~$(S)_+$.
We will simply write $H$, if no confusion can arise.
\begin{prop}
\label{prop:Z}
For every $Z\in\mathcal{Z}$, the following facts hold:
\begin{enumerate}
\item[$(a)$]
there exist a bounded and open subset $U$ of $W^{1,p}_0(\Omega)$
and $\overline{t}\in]0,1]$ such that 
\[
Z = Z^{tot}\cap U = Z^{tot}\cap\overline{U}
\]
and such that the variational inequality~\eqref{eq:qevim} has no 
solution $(u,t)\in \partial U\times[0,\overline{t}]$;
in particular, the degree
$\mathrm{deg}((H_{t}^\psi,K),U,0)$
is defined whenever $t \in]0,\overline{t}]$;
\item[$(b)$]
if $\psi_0,\psi_1:\R\rightarrow[0,1]$ have the same
properties of $\psi$ and $U_0$, $\overline{t}_0$ and 
$U_1$, $\overline{t}_1$ are as in~$(a)$, then
\[
\mathrm{deg}((H_{t}^{\psi_0},K),U_0,0) = 
\mathrm{deg}((H_{\tau}^{\psi_1},K),U_1,0)
\]
for every $t \in]0,\overline{t}_0]$ and
$\tau \in]0,\overline{t}_1]$;
\item[$(c)$]
if $\hat{a}$ and $\hat{b}$ also 
satisfy~\ref{hni}, \ref{hnii} and
\begin{multline*}
~\qquad\qquad
\int_{\Omega}
\bigl[a(x,u,\nabla u)\cdot\nabla v +
b(x,u,\nabla u)\,v\bigr]\,dx \\
=
\int_{\Omega}
\bigl[\hat{a}(x,u,\nabla u)\cdot\nabla v +
\hat{b}(x,u,\nabla u)\,v\bigr]\,dx \\
\text{for any $u\in W^{1,p}_0(\Omega)\cap L^\infty(\Omega)$
and any $v\in C^\infty_c(\Omega)$}\,,
\end{multline*}
then we have
\[
\mathrm{deg}((H_{t}^\psi,K),U,0) = 
\mathrm{deg}((\widehat{H}_{\tau}^\psi,K),\widehat{U},0)
\]
for every $t \in]0,\overline{t}]$ and
$\tau \in]0,\hat{t}]$,
provided that $U$, $\overline{t}$, $\widehat{U}$, $\hat{t}$
are as in~$(a)$ with respect to $a,b$ and $\hat{a},\hat{b}$,
respectively.
\end{enumerate}
\end{prop}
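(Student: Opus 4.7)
For assertion~$(a)$, I would combine the compactness of $Z^{tot}$ given by Theorem~\ref{thm:compact} with the properness statement of Theorem~\ref{thm:proper}. Since $Z$ is open and closed in $Z^{tot}$, the sets $Z$ and $Z^{tot}\setminus Z$ are disjoint compact subsets of $W^{1,p}_0(\Omega)$, so I would choose $U$ as a bounded open neighborhood of $Z$ (for instance a finite union of small open balls centered at points of $Z$) with the property that $\cl{U}\cap Z^{tot}=Z$; then both $Z=Z^{tot}\cap U$ and $Z=Z^{tot}\cap\cl{U}$ hold. To produce $\overline t$ I would argue by contradiction: otherwise there would exist solutions $(u_n,t_n)$ of the parametric problem with coefficients $(a^\psi_{t_n},b^\psi_{t_n})$ and obstacle $K$ satisfying $t_n\to 0$ and $u_n\in\partial U$. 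Since $u_n\in K$ is uniformly bounded in $L^\infty(\Omega)$ by~\ref{hniii} and the family satisfies~\ref{uni}, \ref{um} uniformly with $K_t\equiv K$ (so that~\ref{mc} is trivial), Theorem~\ref{thm:proper} furnishes a subsequence strongly convergent in $W^{1,p}_0(\Omega)$ to some $u\in\partial U$ solving the $t=0$ problem; but $a^\psi_0=a$ and $b^\psi_0=b$, so $u\in Z^{tot}\cap\partial U$, contradicting $\cl{U}\cap Z^{tot}=Z\subseteq U$. The in particular assertion then follows from Theorem~\ref{thm:controllable}: for $t\in[\underline t,1]$ with $\underline t>0$ the truncated coefficients also fulfill~\ref{uc}, so $H^\psi_t$ is continuous, bounded and of class~$(S)_+$, and the Browder--Skrypnik degree of Section~\ref{sect:banach} is defined.

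For assertion~$(b)$ I would split the independence into three parts. Independence from $t\in ]0,\overline t]$ follows from Theorem~\ref{thm:homotopydegree} applied to $W=U\times[t_0,t_1]$ with $[t_0,t_1]\subset ]0,\overline t]$ (reparametrized to $[0,1]$): the obstacle is constant, Mosco-convergence is trivial, and there are no boundary solutions by~$(a)$. Independence from $\psi$ proceeds via the convex homotopy $\psi_\lambda=(1-\lambda)\psi_0+\lambda\psi_1$, which still meets the assumed properties; the two-parameter family $(a^{\psi_\lambda}_t,b^{\psi_\lambda}_t)$ satisfies~\ref{uni} and~\ref{um} uniformly and reduces to $(a,b)$ at $t=0$ for every $\lambda$, so the contradiction scheme of~$(a)$ applied to this enlarged family produces a $\overline t^\ast>0$ valid along the whole $\lambda$-homotopy, and Theorem~\ref{thm:homotopydegree} concludes. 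Independence from $U$ is handled by excision: given two admissible sets $U_0,U_1$, the intersection $U_0\cap U_1$ is still admissible, and Theorem~\ref{thm:add-exc} combined with Theorem~\ref{thm:existencedegree} identifies the degree over $U_i$ with that over $U_0\cap U_1$.

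For assertion~$(c)$, the integral identity hypothesis (extended from $C^\infty_c(\Omega)$ to $W^{1,p}_0(\Omega)\cap L^\infty(\Omega)$ by the density remark in Section~\ref{sect:qevin}) guarantees that the two problems share the same solution set $Z^{tot}$, so any admissible $U$ is admissible for both. I would then introduce the linear operator-homotopy $a_\lambda=(1-\lambda)a+\lambda\hat a$, $b_\lambda=(1-\lambda)b+\lambda\hat b$: condition~\ref{hni} is linear in $(a,b)$ and the strict monotonicity~\ref{hnii} is preserved under convex combinations, so each $(a_\lambda,b_\lambda)$ satisfies the hypotheses, and the same integral identity (applied to the convex combination) shows that the corresponding solution set is $Z^{tot}$ for every $\lambda$. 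Running the argument of~$(a)$ on the two-parameter truncated family $((a_\lambda)^\psi_t,(b_\lambda)^\psi_t)$ produces a common $\overline t^{\ast\ast}>0$, and Theorem~\ref{thm:homotopydegree} in the $\lambda$ variable equates the degrees at $\lambda=0$ and $\lambda=1$; the remaining freedom in $\tau$ and $\widehat U$ is absorbed by~$(b)$.

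The principal obstacle, present in each part, is the uniform exclusion of boundary solutions along the various homotopies, which prevents defining the degree directly at $t=0$ (where $H^\psi_t$ is not of class~$(S)_+$ and does not even map into $W^{-1,p'}(\Omega)$). The whole strategy hinges on the fact that the truncation is the identity at $t=0$, so that Theorem~\ref{thm:proper} forces any limit of hypothetical boundary solutions of the truncated problems to be a genuine solution of the original variational inequality, thereby recovering the compactness needed to pass to the limit and close every contradiction argument.
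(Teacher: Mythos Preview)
Your proposal is correct and follows essentially the same approach as the paper. For~$(a)$ the paper argues exactly as you do (existence of $U$ from the definition of $\mathcal{Z}$, then contradiction via Theorem~\ref{thm:proper} using the $L^\infty$-bound from~\ref{hniii}); for~$(b)$ the paper also uses the convex interpolation $\psi_\mu=(1-\mu)\psi_0+\mu\psi_1$, Theorem~\ref{thm:add-exc} to pass to $U_0\cap U_1$, and Theorem~\ref{thm:homotopydegree} for the homotopy invariances, though it organizes the three independences in a single combined step rather than treating them sequentially; for~$(c)$ the paper merely says the proof is ``quite similar'', and your explicit convex homotopy $(1-\lambda)(a,b)+\lambda(\hat a,\hat b)$ is a natural way to fill in that detail.
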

\begin{proof}
By definition of $\mathcal{Z}$, there exists a bounded and open 
subset $U$ of $W^{1,p}_0(\Omega)$ such that 
\[
Z = Z^{tot}\cap U = Z^{tot}\cap\overline{U}\,.
\]
If $(u_k,t_k)$ is a sequence of solutions
of~\eqref{eq:qevim} with $u_k\in \partial U$ and $t_k \to 0$,
then $(u_k)$ is bounded in $L^{\infty}(\Omega)$ by~\ref{hniii}.
By Theorem~\ref{thm:proper}, up to a subsequence $(u_k)$
is convergent to some $u$ in $W^{1,p}_0(\Omega)$ and $u$
is a solution of~\eqref{eq:qevi}.
Then $u\in \partial U$ and a contradiction follows.
Therefore, there exists $\overline{t}\in]0,1]$ such 
that~\eqref{eq:qevim} has no solution 
$(u,t)\in \partial U\times[0,\overline{t}]$
and assertion~$(a)$ is proved.
\par
To prove~$(b)$, define
\[
\psi_{\mu} = (1-\mu)\psi_0+\mu\psi_1
\]
and consider $H_{t}^{\psi_\mu}$.
Arguing as before, we find
$\underline{t}>0$ with $\underline{t}\leq\overline{t}_j$,
$j=0,1$, such that 
\[
\begin{cases}
u\in K\,,\\
\langle H_{t}^{\psi_\mu}(u),v-u\rangle \geq 0
&\qquad\forall v\in K\,,
\end{cases}
\]
has no solution with 
$u\in (\cl{U_0}\setminus U_1)\cup(\cl{U_1}\setminus U_0)$,
$t\in[0,\underline{t}]$ and $\mu\in[0,1]$. 
\par
From Theorem~\ref{thm:add-exc} we infer that
\begin{alignat*}{3}
&\mathrm{deg}((H_{\underline{t}}^{\psi_0},K),U_0,0) &&= 
\mathrm{deg}((H_{\underline{t}}^{\psi_0},K),U_0\cap U_1,0)\,,\\
&\mathrm{deg}((H_{\underline{t}}^{\psi_1},K),U_1,0) &&= 
\mathrm{deg}((H_{\underline{t}}^{\psi_1},K),U_0\cap U_1,0)\,.
\end{alignat*}
On the other hand, we have
\begin{alignat*}{3}
&\mathrm{deg}((H_{\underline{t}}^{\psi_0},K),U_0\cap U_1,0) &&= 
\mathrm{deg}((H_{\underline{t}}^{\psi_1},K),U_0\cap U_1,0) \,,\\
&\mathrm{deg}((H_{\underline{t}}^{\psi_0},K),U_0,0) &&= 
\mathrm{deg}((H_{t}^{\psi_0},K),U_0,0) 
&&\qquad\forall t\in]0,\overline{t}_0]\,,\\
&\mathrm{deg}((H_{\underline{t}}^{\psi_1},K),U_1,0) &&= 
\mathrm{deg}((H_{\tau}^{\psi_1},K),U_1,0) 
&&\qquad\forall \tau\in]0,\overline{t}_1]\,,
\end{alignat*}
by Theorem~\ref{thm:homotopydegree}.
Then assertion~$(b)$ also follows.
\par
The proof of~$(c)$ is quite similar.
\end{proof}
\begin{defn}
\label{defn:degree}
For every $Z\in\mathcal{Z}(F,K)$, we set
\[
\degc((F,K),Z) = 
\mathrm{deg}((H_t,K),U,0)\,,
\]
where $\psi$, $U$, $\overline{t}$ are as in~$(a)$ and
$0<t\leq \overline{t}$.
We will simply write $\degc(Z)$, if no confusion can arise.
\end{defn}
\begin{prop}
\label{prop:consistency}
Assume that $a$ and $b$ satisfy, instead of~\ref{hni},
the more specific controllable growth condition:
\begin{enumerate}
\item[\mylabel{hci}{\hci}]
there exist $p\in]1,\infty[$, $\alpha^{(0)}\in L^{1}(\Omega)$,
$\alpha^{(1)}\in L^{p'}(\Omega)$, $\beta > 0$ and $\nu>0$
such that
\begin{alignat*}{3}
& \left|a(x,s,\xi)\right| \leq
\alpha^{(1)}(x) + \beta\,|s|^{p-1} + \beta\,|\xi|^{p-1}\,,\\
\noalign{\medskip}
& \left|b(x,s,\xi)\right| \leq
\alpha^{(1)}(x) + \beta\,|s|^{p - 1} + 
\beta\,|\xi|^{p-1}\,,\\
\noalign{\medskip}
&a(x,s,\xi)\cdot \xi \geq
\nu |\xi|^p - \alpha^{(0)}(x) - \beta\,|s|^{p}\,,
\end{alignat*}
for a.e. $x\in\Omega$ and every $s\in\R$, $\xi\in\R^n$.
\end{enumerate}
\indent
Then the map $F:W^{1,p}_0(\Omega)\rightarrow W^{-1,p'}(\Omega)$
is continuous, bounded on bounded subsets and of class~$(S)_+$.
Moreover, if $U$ is a bounded and open subset of $W^{1,p}_0(\Omega)$
such that~\eqref{eq:qevi} has no solution $u\in \partial U$ and
\[
Z = \left\{u\in U:
\,\,\text{$u$ is a solution of~\eqref{eq:qevi}}\right\}\,,
\]
then $Z\in\mathcal{Z}$ and
\[
\degc(Z) = \mathrm{deg}((F,K),U,0)\,.
\]
\end{prop}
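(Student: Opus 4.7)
The plan is to handle the two assertions separately. First I would establish that, under the stronger growth condition \ref{hci}, the map $F$ actually takes values in $W^{-1,p'}(\Omega)$ (rather than only in $W^{-1,p'}(\Omega)+L^1(\Omega)$) and enjoys the required regularity. Indeed, \ref{hci} forces the Nemytskii operators $u\mapsto a(\cdot,u,\nabla u)$ and $u\mapsto b(\cdot,u,\nabla u)$ to be continuous and bounded maps from $W^{1,p}_0(\Omega)$ into $L^{p'}(\Omega)^n$ and $L^{p'}(\Omega)$, respectively, by the standard theory of Carath\'eodory operators with $p$-growth. Since $L^{p'}(\Omega)\hookrightarrow W^{-1,p'}(\Omega)$, I conclude that $F:W^{1,p}_0(\Omega)\to W^{-1,p'}(\Omega)$ is continuous and bounded on bounded sets. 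The $(S)_+$ property is then an immediate instance of Theorem~\ref{thm:controllable} applied with $T$ reduced to a single point, since \ref{hci} coincides with \ref{uc} and \ref{hnii} with \ref{um} in this case.

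The second task is to identify $Z$ as an element of $\mathcal{Z}$ and to compare $\degc(Z)$ with $\mathrm{deg}((F,K),U,0)$. That $Z\in \mathcal{Z}$ follows quickly: Theorem~\ref{thm:compact} gives compactness of $Z^{tot}$, and the hypothesis that \eqref{eq:qevi} has no solution on $\partial U$, combined with openness of $U$, implies $Z=Z^{tot}\cap U = Z^{tot}\cap \overline{U}$, which is both relatively open and relatively closed in $Z^{tot}$.

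For the degree identity, the crucial observation is that, under \ref{hci}, the cut-off $(a_t^\psi, b_t^\psi)$ extends continuously up to $t=0$ and reproduces $(a,b)$ there. Indeed $\psi(0)=1$ gives $a_0^\psi = a$ and $b_0^\psi(x,s,\xi)=\Psi_0(b(x,s,\xi))=b(x,s,\xi)$, whence $H_0^\psi = F$ as operators into $W^{-1,p'}(\Omega)$. A direct check shows that $(a_t^\psi, b_t^\psi)$ satisfies \ref{uc} and \ref{um} with constants independent of $t\in[0,1]$: for $b_t^\psi$ one uses $|\Psi_t(s)|\leq |s|$ together with the bound on $|b|$ supplied by \ref{hci}; for the lower bound, the convex-combination structure yields $a_t^\psi(x,s,\xi)\cdot\xi\geq \min(\nu,1)|\xi|^p-\alpha^{(0)}(x)-\beta|s|^p$; monotonicity follows because both summands in $a_t^\psi$ are strictly monotone in $\xi$, with coefficients summing to $1$. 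Hence Theorem~\ref{thm:controllable} shows that $H^\psi:W^{1,p}_0(\Omega)\times[0,1]\to W^{-1,p'}(\Omega)$ is continuous, bounded on bounded sets and of class $(S)_+$ on the whole interval $[0,1]$.

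To conclude, I would pick $\overline{t}\in(0,1]$ as in Proposition~\ref{prop:Z}(a), so that \eqref{eq:qevim} has no solution in $\partial U\times [0,\overline{t}]$, rescale the parameter via $t=s\overline{t}$, and apply Theorem~\ref{thm:homotopydegree} to $W=U\times[0,1]$ with the constant family $K_s\equiv K$ (for which Mosco convergence is automatic and $\partial_{X\times[0,1]}W=\partial U\times[0,1]$). Invariance of the degree along this homotopy yields
\[
\mathrm{deg}((F,K),U,0)=\mathrm{deg}((H_0^\psi,K),U,0)=\mathrm{deg}((H_{\overline{t}}^\psi,K),U,0)=\degc(Z),
\]
the last equality being Definition~\ref{defn:degree}. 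The principal technical obstacle is the uniform verification of \ref{uc} and \ref{um} for $(a_t^\psi, b_t^\psi)$ at the endpoint $t=0$, so that the full interval $[0,1]$ lies inside the scope of Theorem~\ref{thm:controllable}; once this is in place, everything else is a routine application of the homotopy machinery already developed.
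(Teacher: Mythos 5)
Your proposal is correct and follows essentially the same route as the paper's (very terse) proof: verify that under \ref{hci} the cut-off pair $(a^\psi_t,b^\psi_t)$ satisfies \ref{uc} and \ref{um} uniformly on the whole closed interval $[0,1]$ (not merely on $]0,1]$), observe $H^\psi_0=F$ since $\psi(0)=1$, and then invoke Theorems~\ref{thm:controllable} and~\ref{thm:homotopydegree} to carry the degree from $t=\overline{t}$ down to $t=0$. The paper compresses all of this into two sentences; you have simply filled in the details (the Nemytskii-operator step for continuity/boundedness of $F$, the uniform coercivity bound $\min(\nu,1)|\xi|^p-\alpha^{(0)}-\beta|s|^p$, and the convex-combination argument for \ref{um}), all of which are accurate.
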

\begin{proof}
It is easily seen that this time $a_t$ and $b_t$ 
satisfy~\ref{uc} and~\ref{um},
for $t$ belonging to all $[0,1]$.
\par
Then the assertions follow from
Theorems~\ref{thm:controllable} and~\ref{thm:homotopydegree}.
\end{proof}
\begin{thm}
\label{thm:existence}
Let $Z\in\mathcal{Z}$ with $\degc(Z)\neq 0$.
Then $Z\neq\emptyset$.
\end{thm}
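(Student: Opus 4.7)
The plan is to argue by contradiction: assuming $\degc(Z)\neq 0$, I want to produce an actual element of $Z$ by combining the existence property of a nonzero topological degree (Theorem~\ref{thm:existencedegree}) with the compactness result for the regularized problem (Theorem~\ref{thm:proper}).

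First I would invoke Proposition~\ref{prop:Z}(a) to fix a bounded open set $U\subseteq W^{1,p}_0(\Omega)$ and $\overline{t}\in\,]0,1]$ with $Z=Z^{tot}\cap U=Z^{tot}\cap\overline{U}$, such that the parametric variational inequality~\eqref{eq:qevim} associated with $(a_t^\psi,b_t^\psi)$ has no solution on $\partial U\times[0,\overline{t}]$. By Definition~\ref{defn:degree},
\[
\mathrm{deg}((H_t^\psi,K),U,0)=\degc(Z)\neq 0
\qquad\text{for every $t\in\,]0,\overline{t}]$}.
\]
A preliminary remark handles the degenerate case $K=\emptyset$: the same procedure with $U=\emptyset$ combined with Theorem~\ref{thm:existencedegree} would force $\degc(Z)=0$, so we may assume $K\neq\emptyset$.

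Next I would choose a sequence $t_k\in\,]0,\overline{t}]$ with $t_k\to 0^+$. Since the degree does not vanish, Theorem~\ref{thm:existencedegree} yields, for each $k$, some $u_k\in\overline{U}$ satisfying
\[
u_k\in K,\qquad
\langle H_{t_k}^\psi(u_k),v-u_k\rangle\geq 0 \quad\forall v\in K.
\]
Assumption~\ref{hniii} then forces $(u_k)$ to be uniformly bounded in $L^\infty(\Omega)$, since membership in $K$ pins each $u_k$ between the bounded functions $\underline{u}$ and $\overline{u}$.

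The heart of the argument is the passage to the limit $k\to\infty$. Direct inspection of the definitions of $\Psi_t$ and of $a_t^\psi,b_t^\psi$ shows that these data satisfy~\ref{uni} and~\ref{um} uniformly for $t\in[0,1]$, and that $a_0^\psi=a$, $b_0^\psi=b$ (because $\psi(0)=1$). Since the obstacle sets $K_t\equiv K$ are constant in $t$, the Mosco-convergence condition~\ref{mc} is trivial. Hence Theorem~\ref{thm:proper}, applied on the parameter space $T=[0,1]$, extracts a subsequence of $(u_k)$ converging strongly in $W^{1,p}_0(\Omega)$ to some $u\in K$ which solves the limit variational inequality at $t=0$, namely~\eqref{eq:qevi} itself. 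Strong convergence preserves $u\in\overline{U}$, so $u\in Z^{tot}\cap\overline{U}=Z$, contradicting $Z=\emptyset$.

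I expect the main obstacle to be the verification that $(a_t^\psi,b_t^\psi)$ fulfils~\ref{uni} and~\ref{um} \emph{uniformly down to $t=0$}, in particular the coercivity bound for $a_t^\psi(x,s,\xi)\cdot\xi$; this is routine but requires a little care because $a_t^\psi$ interpolates between $a$ and the $p$-Laplacian term $|\xi|^{p-2}\xi$. The other conceptually crucial point is that~\ref{hniii} is what makes $(u_k)$ bounded in $L^\infty$, which is precisely the hypothesis that allows Theorem~\ref{thm:proper} to turn weak compactness into strong compactness in $W^{1,p}_0(\Omega)$.
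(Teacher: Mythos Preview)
Your argument is correct and is essentially the paper's own proof, only unrolled: the paper condenses your sequence construction and limit passage into the single sentence ``from Theorem~\ref{thm:proper} we infer that there exists $t\in\,]0,\overline{t}]$ such that~\eqref{eq:qevim} has no solution $(u,t)$ with $u\in\overline{U}$'', and then applies Theorem~\ref{thm:existencedegree} at that $t$. Your explicit treatment of the case $K=\emptyset$ and the check that $a_0^\psi=a$, $b_0^\psi=b$ are details the paper leaves implicit.
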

\begin{proof}
Let $U$ and $\overline{t}$ be as in~$(a)$ of
Proposition~\ref{prop:Z}.
If $Z=\emptyset$, from Theorem~\ref{thm:proper}
we infer that there exists $t\in]0,\overline{t}]$
such that~\eqref{eq:qevim} has no solution $(u,t)$
with $u\in\cl{U}$.
From Theorem~\ref{thm:existencedegree} we deduce that
\[
\degc(Z) = \mathrm{deg}((H_t,K),U,0) = 0
\]
and a contradiction follows.
\end{proof}
Along the same line, the additivity property
can be proved taking advantage
of~Theorems~\ref{thm:add-exc} and~\ref{thm:proper}.
\begin{thm}
\label{thm:additivity}
Let $Z_0, Z_1\in \mathcal{Z}$ with $Z_0\cap Z_1=\emptyset$.
Then $Z_0\cup Z_1\in \mathcal{Z}$ and
\[
\degc(Z_0\cup Z_1) =
\degc(Z_0) + \degc(Z_1)\,.
\]
\end{thm}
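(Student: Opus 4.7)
The plan is to mimic the argument of Proposition~\ref{prop:Z}$(a)$ for $Z_0$ and $Z_1$ separately, and then glue via Theorem~\ref{thm:add-exc}. First, $Z_0 \cup Z_1 \in \mathcal{Z}$ is immediate: the union of two subsets of $Z^{tot}$ that are each both open and closed (in the relative $W^{1,p}_0(\Omega)$-topology) is again both open and closed. By Proposition~\ref{prop:Z}$(a)$, there exist bounded open sets $U_0, U_1 \subseteq W^{1,p}_0(\Omega)$ and $\overline{t}_0, \overline{t}_1 \in\,]0,1]$ with
\[
Z_j = Z^{tot}\cap U_j = Z^{tot}\cap\cl{U_j}\qquad(j=0,1)
\]
and such that~\eqref{eq:qevim} has no solution on $\partial U_j\times[0,\overline{t}_j]$.

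The first technical step is to arrange the $U_j$ to have \emph{disjoint closures}, so that $\partial(U_0\cup U_1)=\partial U_0\cup\partial U_1$. Since $Z_0$ and $Z_1$ are disjoint closed subsets of the compact set $Z^{tot}$ (Theorem~\ref{thm:compact}) and $W^{1,p}_0(\Omega)$ is metric, there exist open sets $V_0, V_1$ with $Z_j\subseteq V_j$ and $\cl{V_0}\cap\cl{V_1}=\emptyset$. Replacing $U_j$ by $U_j\cap V_j$ preserves the identities $Z_j = Z^{tot}\cap U_j = Z^{tot}\cap\cl{U_j}$ and yields $\cl{U_0}\cap\cl{U_1}=\emptyset$. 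Set $U = U_0\cup U_1$; then
\[
Z_0\cup Z_1 = Z^{tot}\cap U = Z^{tot}\cap \cl{U}\,.
\]

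Next, we must ensure no solution of~\eqref{eq:qevim} lies on $\partial U\times[0,\overline{t}]$ for some $\overline{t}>0$. If this failed, we would have a sequence $(u_k,t_k)$ of solutions with $u_k\in\partial U=\partial U_0\cup\partial U_1$ and $t_k\to 0$; since $Z^{tot}$ is bounded in $L^\infty(\Omega)$ by~\ref{hniii}, Theorem~\ref{thm:proper} provides a subsequence converging in $W^{1,p}_0(\Omega)$ to a solution $u$ of~\eqref{eq:qevi}, i.e.\ $u\in Z^{tot}$. Since $\partial U_0$ and $\partial U_1$ are closed and disjoint, $u$ lies in one of them, contradicting the fact that $Z_j = Z^{tot}\cap\cl{U_j}$ is disjoint from $\partial U_j$. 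Hence such an $\overline{t}\in\,]0,\min(\overline{t}_0,\overline{t}_1)]$ exists; this is the only delicate point.

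Finally, fix any $t\in\,]0,\overline{t}]$. By construction, $U_0$ and $U_1$ are disjoint open subsets of $U$, and the variational inequality associated to $H^\psi_t$ with obstacle $K$ has no solution in $\cl{U}\setminus(U_0\cup U_1)=\partial U$. Theorem~\ref{thm:add-exc} then yields
\[
\mathrm{deg}((H^\psi_t,K),U,0) =
\mathrm{deg}((H^\psi_t,K),U_0,0) +
\mathrm{deg}((H^\psi_t,K),U_1,0)\,.
\]
Recognising each side via Definition~\ref{defn:degree} (consistency of the choice being guaranteed by Proposition~\ref{prop:Z}$(b)$), we conclude
\[
\degc(Z_0\cup Z_1) = \degc(Z_0) + \degc(Z_1)\,.
\]
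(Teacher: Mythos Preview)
Your proof is correct and follows precisely the route the paper indicates (``taking advantage of Theorems~\ref{thm:add-exc} and~\ref{thm:proper}''): you separate $Z_0$ and $Z_1$ by open sets with disjoint closures, use Theorem~\ref{thm:proper} exactly as in the proof of Proposition~\ref{prop:Z}$(a)$ to clear solutions of~\eqref{eq:qevim} off $\partial U\times[0,\overline t]$, and then read off additivity from Theorem~\ref{thm:add-exc}. The paper gives only this one-line hint, so your argument is simply the fleshed-out version; the minor bookkeeping (e.g.\ that the bound $\overline t\le\min(\overline t_0,\overline t_1)$ is not actually needed once you rerun the $\partial U$ argument, and that disjointness of $\partial U_0,\partial U_1$ is irrelevant for placing the limit in one of them) does not affect correctness.
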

\begin{thm}
\label{thm:homotopy}
Let
\begin{align*}
& a:\Omega\times(\R\times\R^n\times[0,1])\rightarrow \R^n\,,\\
\noalign{\medskip}
& b:\Omega\times(\R\times\R^n\times[0,1])\rightarrow \R
\end{align*}
be two Carath\'eodory functions 
satisfying~\ref{uni} and~\ref{um}
with respect to $T=[0,1]$ and set
\[
H_t(u) = - \mathrm{div}\left[a_t(x,u,\nabla u)\right]
+ b_t(x,u,\nabla u)\,.
\]
Let also, for each $t\in [0,1]$, 
$\underline{u}_t:\Omega\rightarrow\Rb$ be a $p$-quasi upper 
semicontinuous function and $\overline{u}_t:\Omega\rightarrow\Rb$
a $p$-quasi lower semicontinuous function, define
$K_t$ as in section~\ref{sect:qevinp} and assume that:
\begin{itemize}
\item
the functions $\underline{u}_t, \overline{u}_t$ are bounded
uniformly with respect to $t\in[0,1]$, 
we have $K_t\neq\emptyset$ for any $t\in[0,1]$ 
and assumption~\ref{mc} is satisfied.
\end{itemize}
\indent
Then the following facts hold:
\begin{enumerate}
\item[$(a)$]
the set
\[
\widehat{Z}^{tot} :=
\left\{(u,t)\in [W^{1,p}_0(\Omega)\cap L^\infty(\Omega)]
\times[0,1]:\,\,\text{$(u,t)$
is a solution of~\eqref{eq:qevim}}\right\}
\]
is (strongly) compact in $W^{1,p}_0(\Omega)\times[0,1]$
(possibly empty);
\item[$(b)$]
if $\widehat{Z}$ is open and closed in $\widehat{Z}^{tot}$ 
with respect to the topology of $W^{1,p}_0(\Omega)\times[0,1]$ and
\[
\widehat{Z}_t = \left\{u\in W^{1,p}_0(\Omega)\cap L^\infty(\Omega):\,\,
(u,t)\in \widehat{Z}\right\}\,,
\]
then $\widehat{Z}_t\in \mathcal{Z}(H_t,K_t)$
for any $t\in[0,1]$ and
$\degc((H_t,K_t),\widehat{Z}_t)$ is independent of $t\in[0,1]$.
\end{enumerate}
\end{thm}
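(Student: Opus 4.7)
The plan is to first establish the compactness in~$(a)$ as a direct consequence of Theorem~\ref{thm:proper}, and then to reduce~$(b)$ to Theorem~\ref{thm:homotopydegree} applied to the truncated family $H^\psi_{t,\tau}$ introduced in Section~\ref{sect:tdvi}.

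For~$(a)$, take a sequence $(u_k,t_k)$ in $\widehat{Z}^{tot}$. Passing to a subsequence, $t_k\to t\in[0,1]$. The uniform $L^\infty$-boundedness of $\underline{u}_t,\overline{u}_t$ gives a uniform bound $\|u_k\|_\infty\leq R$. Since $K_t\neq\emptyset$, Theorem~\ref{thm:proper} provides a further subsequence of $(u_k)$ that converges strongly in $W^{1,p}_0(\Omega)$ to some $u$ with $(u,t)$ a solution of~\eqref{eq:qevim}, so $(u,t)\in \widehat{Z}^{tot}$. Hence $\widehat{Z}^{tot}$ is strongly compact in $W^{1,p}_0(\Omega)\times[0,1]$.

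For~$(b)$, I first observe that each slice $\widehat{Z}_t$ is open and closed in $Z^{tot}(H_t,K_t)$ with respect to the $W^{1,p}_0(\Omega)$-topology (as a slice of an open-closed set in the product topology), so that $\widehat{Z}_t\in\mathcal{Z}(H_t,K_t)$ and the integer $\degc((H_t,K_t),\widehat{Z}_t)$ is well defined. Since $\widehat{Z}^{tot}$ is compact and $\widehat{Z}$ is both open and closed in it, $\widehat{Z}$ and $\widehat{Z}^{tot}\setminus\widehat{Z}$ are disjoint compact subsets of $W^{1,p}_0(\Omega)\times[0,1]$; hence there exists a bounded open $W\subset W^{1,p}_0(\Omega)\times[0,1]$ with
\[
\widehat{Z} = \widehat{Z}^{tot}\cap W = \widehat{Z}^{tot}\cap \overline{W}.
\]
Arguing as in Proposition~\ref{prop:Z}$(a)$, if for some sequence $\tau_k\to 0^+$ the truncated parametric problem associated with $(a^\psi_{t,\tau_k},b^\psi_{t,\tau_k})$ admitted solutions $(u_k,t_k)\in\partial_{X\times[0,1]}W$, then, since $\|u_k\|_\infty$ is uniformly controlled by the obstacles, the truncations coincide with the original operators on the range $[-R,R]$ for $\tau_k$ small, and Theorem~\ref{thm:proper} (adapted to the truncated family, whose assumptions are those of Section~\ref{sect:qevinp} with the same uniform $L^\infty$-bound) yields a limit $(u,t)\in\partial W\cap\widehat{Z}^{tot}=\emptyset$, a contradiction. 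Consequently, there exists $\overline{\tau}\in (0,1]$ such that the truncated parametric problem has no solution on $\partial_{X\times[0,1]}W$ for every $(t,\tau)\in[0,1]\times(0,\overline{\tau}]$.

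Now fix any $\tau\in(0,\overline{\tau}]$. The family $\{H^\psi_{t,\tau}\}_{t\in[0,1]}$ satisfies~\ref{uc} and~\ref{um} uniformly in $t$, so by Theorem~\ref{thm:controllable} it is a continuous, bounded map of class~$(S)_+$ from $W^{1,p}_0(\Omega)\times[0,1]$ into $W^{-1,p'}(\Omega)$. The Mosco-convergence of $t\mapsto K_t$ is precisely hypothesis~\ref{mc} (restricted to $W^{1,p}_0(\Omega)$-convergence suffices, since the $L^\infty$-boundedness needed in~\ref{mc} is given by the uniform obstacle bound). Therefore Theorem~\ref{thm:homotopydegree}$(b)$ applies and gives that $\mathrm{deg}((H^\psi_{t,\tau},K_t),W_t,0)$ is independent of $t\in[0,1]$. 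By Definition~\ref{defn:degree}, this common value coincides with $\degc((H_t,K_t),\widehat{Z}_t)$ for each~$t$, proving the claim. The main obstacle is the uniform-in-$\tau$ boundary avoidance asserted above: it is the parametric analogue of Proposition~\ref{prop:Z}$(a)$ and must be carried out with the joint parameter $(t,\tau)$, invoking the $L^\infty$-bound from the obstacles and the fact that the truncation is inactive on the physically relevant range as $\tau\to 0^+$.
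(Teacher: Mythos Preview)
Your argument is correct and follows essentially the same route as the paper: compactness via Theorem~\ref{thm:proper}, isolation of $\widehat{Z}$ by a bounded open $W$, a uniform $\overline{\tau}$ obtained by contradiction through Theorem~\ref{thm:proper} on the doubled parameter $(t,\tau)$, and finally Theorem~\ref{thm:homotopydegree} applied at the fixed level $\tau=\overline{\tau}$ together with Definition~\ref{defn:degree}. One small imprecision worth cleaning up: the assertion that the truncations ``coincide with the original operators on the range $[-R,R]$ for $\tau_k$ small'' holds for $a^\psi$ but not for $b^\psi$ (since $\Psi_{\tau_k}$ acts on the value $b_t(x,s,\xi)$, which can be large even for $|s|\leq R$ when $|\xi|$ is large); this remark is in any case unnecessary, because $a_{t,0}=a_t$ and $b_{t,0}=b_t$, so the limit furnished by Theorem~\ref{thm:proper} with parameter $(t_k,\tau_k)\to(t,0)$ automatically lies in $\widehat{Z}^{tot}$.
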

\begin{proof}
First of all, the set $\widehat{Z}^{tot}$ is compact by
Theorem~\ref{thm:proper}.
To prove assertion~$(b)$, let $W$ be a bounded and open
subset of $W^{1,p}_0(\Omega)\times[0,1]$ such that
\[
\widehat{Z} = \widehat{Z}^{tot}\cap W 
= \widehat{Z}^{tot}\cap\overline{W}\,.
\]
In particular, we have $\widehat{Z}_t\in \mathcal{Z}(H_t,K_t)$ 
for any $t\in[0,1]$.
\par
Let $\widehat{T}=[0,1]\times[0,1]$, let
\begin{align*}
&a_{t,\tau}(x,s,\xi) = \psi(\tau |s|)\,a_t(x,s,\xi)
+ [1-\psi(\tau |s|)]\,|\xi|^{p-2}\xi\,,\\
&b_{t,\tau}(x,s,\xi) = \Psi_{\tau}(b_t(x,s,\xi))\,,
\end{align*}
for $(t,\tau)\in\widehat{T}$  and define 
\begin{gather*}
H_{t,\tau}(u) =
- \mathrm{div}[a_{t,\tau}(x,u,\nabla u)] 
+ b_{t,\tau}(x,u,\nabla u)\,,\\
K_{t,\tau} = K_t\,.
\end{gather*}
It is easily seen that $a_{t,\tau}$ and $b_{t,\tau}$
satisfy~\ref{uni} and~\ref{um} with respect to $\widehat{T}$,
so that we can consider the problem
\begin{equation}
\label{eq:qevitm}
\begin{cases}
(u,(t,\tau))\in [W^{1,p}_0(\Omega)\cap L^{\infty}(\Omega)]
\times \widehat{T}\,,\\
\noalign{\medskip}
u\in K_{t,\tau}\,, \\
\noalign{\medskip}
\displaystyle{
\int_{\Omega}
\bigl[a_{t,\tau}(x,u,\nabla u)\cdot\nabla (v-u) +
b_{t,\tau}(x,u,\nabla u)\,(v-u)\bigr]\,dx \geq 0}\\
\noalign{\medskip}
\qquad\qquad\qquad\qquad\qquad\qquad\qquad\qquad\qquad\qquad
\text{for every $v\in K_{t,\tau}$}\,.
\end{cases}
\end{equation}
Since $[0,1]$ is compact, by~Theorem~\ref{thm:proper}
there exists $\overline{\tau}\in]0,1]$ 
such that~\eqref{eq:qevitm} has no solution  
$(u,(t,\tau))$ with $(u,t)\in\partial W$ and
$0\leq\tau\leq\overline{\tau}$.
\par
By Definition~\ref{defn:degree} we infer that
\[
\degc((H_t,K_t),\widehat{Z}_t)
= \mathrm{deg}((H_{t,\overline{\tau}},K_t),W_t,0) 
\qquad\text{for any $t\in[0,1]$}
\]
and the assertion follows from Theorem~\ref{thm:homotopydegree}.
\end{proof}
\begin{rem}
By Theorem~\ref{thm:homotopy} and 
Proposition~\ref{prop:consistency}, $\degc((F,K),Z)$
can be calculated also by other approximation techniques,
with respect to the one used in Definition~\ref{defn:degree}.
\end{rem}
\begin{thm}
\label{thm:tot}
If $K\neq \emptyset$, then $\degc(Z^{tot}) = 1$.
\end{thm}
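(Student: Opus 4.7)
The plan is to reduce $\degc((F,K),Z^{tot})$ to a computation with the pure $p$-Laplacian by invoking the homotopy invariance of Theorem~\ref{thm:homotopy}, and then to apply Theorem~\ref{thm:deg1} to the reduced problem.

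First I would introduce the affine homotopy
\[
a^s(x,u,\xi) = (1-s)\,a(x,u,\xi) + s\,|\xi|^{p-2}\xi\,,\qquad
b^s(x,u,\xi) = (1-s)\,b(x,u,\xi)\,,
\]
for $s\in[0,1]$, keeping the obstacles $\underline{u}_s=\underline{u}$, $\overline{u}_s=\overline{u}$ so that $K_s=K$ is independent of $s$. The uniform natural growth condition~\ref{uni} then holds with constants $\max(\beta_R,1)$ and $\min(\nu_R,1)$, and the strict monotonicity~\ref{um} holds at every $s\in[0,1]$ since the two summands are each monotone and at least one is strictly so. Condition~\ref{hniii} supplies the uniform bound on the obstacles; $K_s=K\neq\emptyset$ by hypothesis; and~\ref{mc} is trivial because $K_s$ does not depend on $s$ and $K$ is weakly closed in $W^{1,p}_0(\Omega)$. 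Theorem~\ref{thm:homotopy}(a) yields that $\widehat{Z}^{tot}$ is compact in $W^{1,p}_0(\Omega)\times[0,1]$, so in particular both open and closed in itself; choosing $\widehat{Z}=\widehat{Z}^{tot}$ in part~(b) gives $\widehat{Z}_s = Z^{tot}((a^s,b^s),K)$ and the invariance
\[
\degc((F,K),Z^{tot}) \;=\; \degc((F^1,K),Z^{tot}((a^1,b^1),K))\,.
\]

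At $s=1$ the pair $(|\xi|^{p-2}\xi,0)$ satisfies the controllable growth condition~\ref{hci} (with $\alpha^{(0)}=\alpha^{(1)}=0$, $\beta=\nu=1$), so by Proposition~\ref{prop:consistency} the right-hand side equals the classical degree $\mathrm{deg}((-\Delta_p,K),U,0)$ on any admissible $U$. Fixing any $u_0\in K$ and taking $U$ to be the open ball of $W^{1,p}_0(\Omega)$ centered at the origin with radius $R>\|\nabla u_0\|_p$ large enough to contain $Z^{tot}((a^1,b^1),K)$, the elementary estimate
\[
\langle -\Delta_p v,\,v-u_0\rangle \;\geq\; \|\nabla v\|_p^{\,p-1}\,\bigl(\|\nabla v\|_p-\|\nabla u_0\|_p\bigr)
\]
is strictly positive for $v\in K\cap\partial U$, so Theorem~\ref{thm:deg1} returns the value $1$. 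The only delicate bookkeeping is the \emph{uniformity} in $s\in[0,1]$ of~\ref{uni} and~\ref{um}, but the linear form of the interpolation makes this routine; no deeper obstacle arises.
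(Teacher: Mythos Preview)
Your proof is correct and follows essentially the same approach as the paper: an affine homotopy from $(a,b)$ to the pure $p$-Laplacian (the paper parametrizes in the opposite direction, with $t=0$ giving the $p$-Laplacian), invocation of Theorem~\ref{thm:homotopy} with $\widehat{Z}=\widehat{Z}^{tot}$, then Proposition~\ref{prop:consistency} and Theorem~\ref{thm:deg1} on a large ball containing a fixed $u_0\in K$. Your H\"older estimate for $\langle -\Delta_p v,\,v-u_0\rangle$ is exactly what the paper has in mind when choosing the radius of $U$.
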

\begin{proof}
Define, for $0\leq t\leq 1$,
\begin{alignat*}{3}
&a_t(x,s,\xi) &&= 
t\,a(x,s,\xi) + (1-t)\,|\xi|^{p-2}\xi\,,\\
&b_t(x,s,\xi) &&= t\,b(x,s,\xi) \,.
\end{alignat*}
It is easily seen that $a_t$ and $b_t$ satisfy
assumptions~\ref{uni} and~\ref{um}, so that
Theorem~\ref{thm:homotopy} can be applied.
If we take $\widehat{Z} = \widehat{Z}^{tot}$, we get
\[
\degc(Z^{tot}) =
\degc((H_1,K),\widehat{Z}_1) = 
\degc((H_0,K),\widehat{Z}_0) \,.
\]
Let $u_0\in K$ and let
\[
U = \left\{u\in W^{1,p}_0(\Omega):\,\,
\|\nabla u\|_2 < r\right\}\,,
\]
with $r$ large enough to guarantee that $u_0\in U$,
$\widehat{Z}_0\subseteq U$ and
\[
\int_\Omega |\nabla v|^{p-2}\nabla v\cdot\nabla(v-u_0)\,dx
\geq 0
\qquad\text{for every $v\in \partial U$}\,.
\]
From Proposition~\ref{prop:consistency}
and Theorem~\ref{thm:deg1} we infer that
\[
\degc((H_0,K),\widehat{Z}_0) =
\mathrm{deg}((H_0,K),U,0) = 1
\]
and the assertion follows.
\end{proof}
\begin{prop}
Let $Z\in\mathcal{Z}(F,K)$ and let $\varphi\in\Phi$ and 
$\vartheta\in\Theta$.
Then $Z^\varphi\in\mathcal{Z}(F^\varphi,K^\varphi)$, 
$Z\in\mathcal{Z}(F_\vartheta,K)$ and
\[
\degc((F,K),Z) = \degc((F^\varphi,K^\varphi),Z^\varphi) =
\degc((F_\vartheta,K),Z)\,.
\]
\end{prop}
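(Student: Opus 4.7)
The plan is to derive both equalities from the homotopy invariance of the degree (Theorem~\ref{thm:homotopy}), while the membership claims $Z^\varphi\in\mathcal{Z}(F^\varphi,K^\varphi)$ and $Z\in\mathcal{Z}(F_\vartheta,K)$ are essentially bookkeeping. Indeed, Proposition~\ref{prop:theta}$(b)$ gives $Z^{tot}(F_\vartheta,K)=Z^{tot}(F,K)$ as subsets of $W^{1,p}_0(\Omega)\cap L^\infty(\Omega)$ with the same topology, so $Z\in\mathcal{Z}(F_\vartheta,K)$ is immediate. For $\varphi$, Proposition~\ref{prop:phi}$(c)$ identifies $Z^{tot}(F^\varphi,K^\varphi)=(Z^{tot}(F,K))^\varphi$; since $Z^{tot}(F,K)$ is compact in $W^{1,p}_0(\Omega)$ (Theorem~\ref{thm:compact}) and bounded in $L^\infty$ by~\ref{hniii}, standard Nemytskii-type arguments show that $u\mapsto\varphi(u)$ and $u\mapsto\varphi^{-1}(u)$ are continuous on this compact set in the $W^{1,p}_0$-topology (using $\varphi,\varphi^{-1}\in C^2$ with derivatives bounded on the relevant range), so they restrict to mutually inverse homeomorphisms between $Z^{tot}(F,K)$ and $Z^{tot}(F^\varphi,K^\varphi)$, yielding $Z^\varphi\in\mathcal{Z}(F^\varphi,K^\varphi)$.

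For the $\vartheta$-equality, I would take the affine homotopy $\vartheta_t(s):=(1-t)+t\vartheta(s)$, which lies in $\Theta$ for every $t\in[0,1]$. With $a_t=a_{\vartheta_t}$, $b_t=b_{\vartheta_t}$ and the constant family $K_t=K$, Proposition~\ref{prop:theta}$(a)$ together with uniform bounds on $\vartheta_t,\vartheta_t'$ (over $t\in[0,1]$ and any bounded $s$-interval) gives~\ref{uni} and~\ref{um} uniformly in~$t$, while~\ref{mc} is trivial. By Proposition~\ref{prop:theta}$(b)$ the solution set is independent of $t$, so $\widehat{Z}:=Z\times[0,1]$ is open and closed in $\widehat{Z}^{tot}=Z^{tot}(F,K)\times[0,1]$, and Theorem~\ref{thm:homotopy}$(b)$ gives $\degc((F_{\vartheta_t},K),Z)$ constant in~$t$; evaluating at $t=0,1$ delivers the equality.

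For the $\varphi$-equality I take $\varphi_t(s):=(1-t)s+t\varphi(s)$; since $\varphi'>0$ the derivative $\varphi_t'(s)=(1-t)+t\varphi'(s)$ is strictly positive, so $\varphi_t$ is an increasing $C^2$-diffeomorphism of $\R$ fixing $0$, i.e.\ $\varphi_t\in\Phi$. Setting $a_t=a^{\varphi_t}$, $b_t=b^{\varphi_t}$, $\underline{u}_t=\varphi_t^{-1}(\underline{u})$, $\overline{u}_t=\varphi_t^{-1}(\overline{u})$ and $K_t=K^{\varphi_t}$, the joint continuity and local boundedness of $\varphi_t,\varphi_t',\varphi_t''$ on compact sets in $(t,s)$, combined with Proposition~\ref{prop:phi}$(a),(b)$, yield~\ref{uni} and~\ref{um} uniformly and uniform $L^\infty$-bounds on $\underline{u}_t,\overline{u}_t$ via~\ref{hniii}. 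Then $\widehat{Z}:=\{(u,t)\in\widehat{Z}^{tot}:\varphi_t(u)\in Z\}$ has slices $\widehat{Z}_t=Z^{\varphi_t}$ and, by a parametric version of the homeomorphism argument from the first paragraph, is open and closed in $\widehat{Z}^{tot}$. Theorem~\ref{thm:homotopy}$(b)$ then interpolates from $t=0$ (where $\varphi_0=\mathrm{id}$, giving $\degc((F,K),Z)$) to $t=1$ (giving $\degc((F^\varphi,K^\varphi),Z^\varphi)$).

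The main obstacle is verifying~\ref{mc} for the nonlinear family $K_t=K^{\varphi_t}$. For the first clause: if $u_k\in K_{t_k}$ with $t_k\to t$, $u_k\rightharpoonup u$ in $W^{1,p}_0(\Omega)$ and $\|u_k\|_\infty$ bounded, then $\varphi_{t_k}(u_k)\in K$; the uniform $L^\infty$-bound together with $\varphi_{t_k}\to\varphi_t$ and $\varphi_{t_k}'\to\varphi_t'$ uniformly on bounded sets yields $\varphi_{t_k}(u_k)\rightharpoonup\varphi_t(u)$ in $W^{1,p}_0(\Omega)$ (by decomposing $\nabla\varphi_{t_k}(u_k)=\varphi_{t_k}'(u_k)\nabla u_k$ and using dominated convergence on $\varphi_{t_k}'(u_k)\to\varphi_t'(u)$ in $L^q$ for every $q<\infty$), and the weak closedness of the convex set $K$ forces $\varphi_t(u)\in K$, i.e.\ $u\in K_t$. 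For the recovery sequence: given $u\in K_t$ set $u_k:=\varphi_{t_k}^{-1}(\varphi_t(u))\in K_{t_k}$; the $C^1$-convergence $\varphi_{t_k}^{-1}\to\varphi_t^{-1}$ on bounded sets yields strong $W^{1,p}_0$-convergence $u_k\to u$ with a uniform $L^\infty$-bound. All remaining details are routine applications of the framework of Section~\ref{sect:tdvi}.
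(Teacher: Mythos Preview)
Your approach is essentially identical to the paper's: you introduce the same affine homotopies $\varphi_t(s)=(1-t)s+t\varphi(s)$ and $\vartheta_t(s)=(1-t)+t\vartheta(s)$ and invoke Theorem~\ref{thm:homotopy}. The paper's proof is a single sentence to this effect, so your verification of the membership claims and of~\ref{mc} for the family $K^{\varphi_t}$ simply supplies details that the paper leaves implicit.
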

\begin{proof}
If we set
\[
\varphi_t(s) = (1-t)s + t\varphi(s)\,,\qquad
\vartheta_t(s) = (1-t) + t \vartheta(s)\,,
\]
the assertion follows from Theorem~\ref{thm:homotopy}.
\end{proof}


\section{Proof of Theorem~\ref{thm:main}}
We aim to apply the results of the previous sections to
\[
a(x,s,\xi) = A(x,s)\xi\,,\qquad
b(x,s,\xi) = B(x,s)|\xi|^2 - g(x,s)\,.
\]
By hypothesis~\ref{ABi}, assumptions~\ref{hni} 
and~\ref{hnii} are satisfied with $p=2$.
Moreover, if $\underline{M}$ and $\overline{M}$ are as in
hypothesis~\ref{ABii}, then $\underline{u}=\underline{M}$
and $\overline{u}=\overline{M}$ satisfy
assumption~\ref{hniii}.
\par
Denote by $(\lambda_k)$, $k\geq 1$, the sequence of the
eigenvalues of~\eqref{eq:ABlin}, repeated according to
multiplicity, and set, for a matter
of convenience, $\lambda_0=-\infty$.
\par
Finally, define $F$, $K$, $Z^{tot}$, $\mathcal{Z}$ and 
$\degc(Z)$ as before, 
observe that $K\neq\emptyset$ and set
\begin{alignat*}{3}
&Z_+
&&=\left\{u\in Z^{tot}\setminus\{0\}:\,\,
\text{$u\geq 0$ a.e. in $\Omega$}\right\}\,,\\
&Z_-
&&=\left\{u\in Z^{tot}\setminus\{0\}:\,\,
\text{$u\leq 0$ a.e. in $\Omega$}\right\}\,.
\end{alignat*}
It is easily seen that
\[
a^\varphi(x,s,\xi) = A^\varphi(x,s)\xi\,,
\qquad
b^\varphi(x,s,\xi) = B^\varphi(x,s)|\xi|^2 - g^\varphi(x,s)\,.
\]
Let us also set
\begin{gather*}
A_\vartheta(x,s)=\vartheta(s) \,A(x,s)\,,\qquad
B_\vartheta(x,s)=\vartheta'(s)\,A(x,s)+\vartheta(s)\,B(x,s)\,,\\
g_\vartheta(x,s)=\vartheta(s)\,g(x,s)\,,
\end{gather*}
so that
\[
a_\vartheta(x,s,\xi)= A_\vartheta(x,s) \xi\,,\qquad
b_\vartheta(x,s,\xi)= B_\vartheta(x,s)|\xi|^2 - g_\vartheta(x,s)\,.
\]
\begin{prop}
\label{prop:sign}
For every $R>0$ there exist $\vartheta_1, \vartheta_2\in\Theta$,
depending only on $\beta_R$ and $\nu_R$, such that
\[
B_{\vartheta_1}(x,s) \leq 0\leq B_{\vartheta_2}(x,s)
\qquad\text{for a.e. $x\in\Omega$ and every $s\in\R$
with $|s|\leq R$}\,.
\]
\end{prop}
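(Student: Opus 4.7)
The plan is to use exponential integrating factors, in direct analogy with Proposition~\ref{prop:signgen}. Recalling that
\[
B_\vartheta(x,s) = \vartheta'(s)\,A(x,s) + \vartheta(s)\,B(x,s),
\]
the natural attempt is $\vartheta(s) = \exp(\pm\gamma s)$ for a suitable $\gamma>0$, since then the factor $\vartheta(s)>0$ can be pulled out and one is left controlling a single affine combination of $A$ and $B$.

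First I would take $\vartheta_2(s) = \exp(\gamma s)$ with $\gamma\geq \beta_R/\nu_R$. A direct computation yields
\[
B_{\vartheta_2}(x,s) = \exp(\gamma s)\,\bigl[\gamma\,A(x,s) + B(x,s)\bigr],
\]
and for $|s|\leq R$ assumption~\ref{ABi} gives $\gamma\,A(x,s)+B(x,s) \geq \gamma\nu_R-\beta_R \geq 0$, so $B_{\vartheta_2}\geq 0$ on the required region. Symmetrically, I would set $\vartheta_1(s)=\exp(-\gamma s)$ with the same $\gamma$, obtaining
\[
B_{\vartheta_1}(x,s) = \exp(-\gamma s)\,\bigl[B(x,s) - \gamma\,A(x,s)\bigr] \leq \exp(-\gamma s)\,(\beta_R - \gamma\nu_R) \leq 0.
\]
Both $\vartheta_1,\vartheta_2$ are smooth and strictly positive on $\R$, hence belong to $\Theta$, and $\gamma$ depends only on $\beta_R$ and $\nu_R$, as demanded.

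There is essentially no obstacle: compared with Proposition~\ref{prop:signgen}, the calculation is actually cleaner, because here $a(x,s,\xi)=A(x,s)\xi$ is linear in $\xi$ and the natural growth term has been placed inside $B(x,s)|\xi|^2$, so no $\alpha_R^{(0)}$ residue appears in the final inequality. The only small choice is to verify that the two-sided bound $\nu_R\leq A\leq \beta_R$ and $|B|\leq \beta_R$ from~\ref{ABi} is enough to bound the bracket by the desired sign, which is immediate once $\gamma$ is chosen as above.
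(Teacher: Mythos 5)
Your proof is correct and follows exactly the route the paper intends: the paper's proof of Proposition~\ref{prop:sign} consists of the single remark that it is a simple variant of Proposition~\ref{prop:signgen}, whose argument uses precisely the exponential integrating factors $\exp(\pm\gamma s)$ with $\gamma\nu_R\geq\beta_R$ that you employ. Your observation that the bound here is cleaner (no $\alpha_R^{(0)}$ term) because $B_\vartheta$ involves only $A$ and $B$, both of which are bounded in sign and size by~\ref{ABi} for $|s|\leq R$, correctly explains why the conclusion sharpens from an $L^1$-bound to a pointwise sign condition.
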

\begin{proof}
It is a simple variant of Proposition~\ref{prop:signgen}.
\end{proof}
\begin{prop}
\label{prop:nat}
Let 
\[
\hat{g}:\Omega\times\R\rightarrow\R
\]
be a Carath\'eodory function such that
for every $R>0$ there exists $\beta_R > 0$ satisfying
\[
|\hat{g}(x,s)| \leq \beta_R 
\qquad\text{for a.e. $x\in\Omega$ and every 
$s\in\R$ with $|s|\leq R$}
\]
and such that
\[
\hat{g}(x,\underline{M}) \geq 0 \geq \hat{g}(x,\overline{M})
\qquad\text{for a.e. $x\in\Omega$}\,.
\]
\indent
If $u$ is a solution of the variational 
inequality~\eqref{eq:qevi} with
\[
a(x,s,\xi) = A(x,s)\xi\,,\qquad
b(x,s,\xi) = B(x,s)|\xi|^2 - \hat{g}(x,s)\,,
\]
then $u$ satisfies the equation
\begin{multline*}
\int_{\Omega}
\bigl[A(x,u)\nabla u\cdot\nabla v +
B(x,u)\,|\nabla u|^2\,v\bigr]\,dx \\
=
\int_\Omega \hat{g}(x,u)\,v\,dx
\qquad\text{for any 
$v\in W^{1,2}_0(\Omega)\cap L^\infty(\Omega)$}\,.
\end{multline*}
\end{prop}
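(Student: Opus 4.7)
The plan is to exploit Proposition~\ref{prop:TK} and to test the variational inequality against the truncations $w^+=(u+\alpha v)\wedge\overline M$ and $w^-=(u-\alpha v)\vee\underline M$, which lie in $K$ for small $\alpha>0$, and then to send $\alpha\to 0^+$. Setting
\[
I(v):=\int_\Omega\bigl[A(x,u)\nabla u\cdot\nabla v+B(x,u)|\nabla u|^2 v-\hat g(x,u)\,v\bigr]\,dx\,,
\]
the goal is $I(v)=0$ for every $v\in W^{1,2}_0(\Omega)\cap L^\infty(\Omega)$; splitting $v=v^+-v^-$ reduces the matter to $v\geq 0$.

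As a preliminary step I would prove that $\hat g(x,\overline M)=0$ a.e.\ on $\{u=\overline M\}$, and symmetrically $\hat g(x,\underline M)=0$ a.e.\ on $\{u=\underline M\}$. Fix $v\geq 0$ in $W^{1,2}_0\cap L^\infty$ and a Lipschitz cutoff $\phi_\delta:\R\to[0,1]$ which vanishes on $(-\infty,\overline M-\delta]$ and grows linearly to $\phi_\delta(\overline M)=1$; then $w=-\phi_\delta(u)\,v\in T_uK$ for small~$\delta$. Rearranging the inequality of Proposition~\ref{prop:TK}, using Stampacchia's lemma $\nabla u=0$ a.e.\ on $\{u=\overline M\}$, and sending $\delta\to 0^+$ via dominated convergence yields
\[
0\leq\int_{\{u=\overline M\}}\hat g(x,\overline M)\,v\,dx\,,
\]
which combined with the pointwise sign $\hat g(x,\overline M)v\leq 0$ forces vanishing of the integrand; varying $v$ delivers the claim.

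For $v\geq 0$ and small $\alpha>0$ I then set $\eta_+=(u+\alpha v-\overline M)^+$ and $\eta_-=(\underline M+\alpha v-u)^+$, so that $w^\pm-u=\pm\alpha v\mp\eta_\pm$. Writing $J(\phi):=\int_\Omega[A\nabla u\cdot\nabla\phi+B|\nabla u|^2\phi-\hat g\,\phi]\,dx$, the VI against $w^\pm$ produces the sandwich $J(\eta_+)\leq\alpha I(v)\leq J(\eta_-)$, while the sub-tests $w=u-\eta_+\in K$ and $w=u+\eta_-\in K$ (both valid for $\alpha$ small) yield the one-sided bounds $J(\eta_+)\leq 0\leq J(\eta_-)$. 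It remains to show that $J(\eta_\pm)/\alpha\to 0$. Focusing on $\eta_+$ and $E=\{u+\alpha v>\overline M\}$, one has $E\searrow\{u=\overline M,v>0\}$ and $\eta_+/\alpha\to v\,\chi_{\{u=\overline M,v>0\}}$ pointwise with domination by $v$; dominated convergence combined with $\nabla u=0$ on $\{u=\overline M\}$ and the preliminary step gives $\int_E A\nabla u\cdot\nabla v\to 0$, $\alpha^{-1}\int B|\nabla u|^2\eta_+\to 0$, and $\alpha^{-1}\int\hat g\,\eta_+\to 0$.

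The main obstacle is controlling $\alpha^{-1}\int_E A|\nabla u|^2$, since Stampacchia-type arguments only provide $\int_E A|\nabla u|^2\to 0$, not with the rate $O(\alpha)$ one would want. The trick is to read the sub-test bound $J(\eta_+)\leq 0$ as
\[
\int_E A|\nabla u|^2\leq -\alpha\!\int_E A\nabla u\cdot\nabla v-\!\int B|\nabla u|^2\eta_++\!\int\hat g\,\eta_+\,,
\]
whose right-hand side is $o(\alpha)$ by the three estimates just obtained; dividing by $\alpha$ forces $\alpha^{-1}\int_E A|\nabla u|^2\to 0$, hence $J(\eta_+)/\alpha\to 0$ and symmetrically $J(\eta_-)/\alpha\to 0$. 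The sandwich then pins $I(v)=0$ for every $v\geq 0$, and the general case follows from $I(v)=I(v^+)-I(v^-)$.
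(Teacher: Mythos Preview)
Your argument is correct and uses the same truncated test functions $w^\pm=(u\pm\alpha v)\wedge\overline M$, $(u\pm\alpha v)\vee\underline M$ as the paper, together with the same limit $\alpha\to 0^+$; the overall strategy coincides.

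Where you diverge is in the bookkeeping of the error terms. You keep the contribution $\alpha^{-1}\int_E A|\nabla u|^2$ and then work to show it is $o(1)$, which forces you to introduce the auxiliary sub-test $u-\eta_+\in K$ and the preliminary step $\hat g(\cdot,\overline M)=0$ a.e.\ on $\{u=\overline M\}$. The paper avoids both: since $A>0$, one has
\[
\frac1t\int_\Omega A(x,u)\nabla u\cdot\nabla(u_t-u)
=\int_{\{u+tv<\overline M\}}A\nabla u\cdot\nabla v-\frac1t\int_{\{u+tv\geq\overline M\}}A|\nabla u|^2
\leq\int_{\{u+tv<\overline M\}}A\nabla u\cdot\nabla v\,,
\]
so the troublesome term is simply discarded by sign. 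After passing to the limit this yields $I(v)\geq -\int_{\{u=\overline M\}}\hat g(x,\overline M)\,v$, and now the bare hypothesis $\hat g(x,\overline M)\leq 0$ is enough to conclude $I(v)\geq 0$; no need to show vanishing on the contact set. The companion test with $\max\{u-tv,\underline M\}$ gives $I(v)\leq 0$ in the same way. Your route establishes more (the rate $\alpha^{-1}\int_E A|\nabla u|^2\to 0$ and the contact-set identity), but at the cost of two additional ingredients that the sign structure of $A$ and $\hat g$ makes unnecessary.
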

\begin{proof}
Let $v\in W^{1,2}_0(\Omega)\cap L^\infty(\Omega)$
with $v\geq 0$ a.e. in $\Omega$, let $t>0$ and let
\[
u_t = \min\{u+tv,\overline{M}\}\,.
\]
Since $u_t\in K$, it follows
\begin{multline*}
\frac{1}{t}\,\int_{\Omega}
A(x,u)\nabla u\cdot\nabla (u_t-u)\,dx \\
\geq - \int_{\Omega}
B(x,u)\,|\nabla u|^2\,\frac{u_t-u}{t}\,dx +
\int_\Omega \hat{g}(x,u)\,\frac{u_t-u}{t}\,dx \\
= - \int_{\{u<\overline{M}\}}
B(x,u)\,|\nabla u|^2\,\frac{u_t-u}{t}\,dx +
\int_{\{u<\overline{M}\}} 
\hat{g}(x,u)\,\frac{u_t-u}{t}\,dx 
 \,.
\end{multline*}
On the other hand, we have
\begin{multline*}
\frac{1}{t}\,\int_{\Omega}
A(x,u)\nabla u\cdot\nabla (u_t-u)\,dx \\
= \int_{\{u+tv < \overline{M}\}}
A(x,u)\nabla u\cdot\nabla v\,dx
- \frac{1}{t}\,\int_{\{u+tv\geq \overline{M}\}}
A(x,u)|\nabla u|^2\,dx \\
\leq
\int_{\{u+tv < \overline{M}\}}
A(x,u)\nabla u\cdot\nabla v\,dx \,,
\end{multline*}
whence
\begin{multline*}
\int_{\{u+tv < \overline{M}\}}
A(x,u)\nabla u\cdot\nabla v\,dx \\
\geq
- \int_{\{u<\overline{M}\}}
B(x,u)\,|\nabla u|^2\,\frac{u_t-u}{t}\,dx +
\int_{\{u<\overline{M}\}} 
\hat{g}(x,u)\,\frac{u_t-u}{t}\,dx \,.
\end{multline*}
Since $0\leq u_t - u \leq tv$, we can go to the
limit as $t\to 0^+$, obtaining
\[
\begin{split}
\int_{\Omega}
A(x,u)\nabla u\cdot\nabla v\,dx &=
\int_{\{u < \overline{M}\}}
A(x,u)\nabla u\cdot\nabla v\,dx \\
&\geq
- \int_{\{u<\overline{M}\}}
B(x,u)\,|\nabla u|^2\,v\,dx +
\int_{\{u<\overline{M}\}} 
\hat{g}(x,u)\,v\,dx \\
&= - \int_{\Omega}
B(x,u)\,|\nabla u|^2\,v\,dx +
\int_{\Omega} \hat{g}(x,u)\,v\,dx \\
&\qquad\qquad\qquad\qquad\qquad\qquad\qquad
- \int_{\{u=\overline{M}\}} 
\hat{g}(x,\overline{M})\,v\,dx \\
&\geq
- \int_{\Omega}
B(x,u)\,|\nabla u|^2\,v\,dx +
\int_{\Omega} \hat{g}(x,u)\,v\,dx \,.
\end{split}
\]
Arguing on $u_t=\max\{u-tv,\underline{M}\}$,
one can prove in a similar way that
\[
\int_{\Omega}
A(x,u)\nabla u\cdot\nabla v\,dx \leq
- \int_{\Omega}
B(x,u)\,|\nabla u|^2\,v\,dx +
\int_{\Omega} \hat{g}(x,u)\,v\,dx \,,
\]
whence
\begin{multline*}
\int_{\Omega}
\bigl[A(x,u)\nabla u\cdot\nabla v +
B(x,u)\,|\nabla u|^2\,v\bigr]\,dx =
\int_\Omega \hat{g}(x,u)\,v\,dx\\
\text{for any 
$v\in W^{1,2}_0(\Omega)\cap L^\infty(\Omega)$ with
$v\geq 0$ a.e. in $\Omega$}
\end{multline*}
and the assertion follows.
\end{proof}
\begin{prop}
\label{prop:strongmax}
Let $\Omega$ be connected and assume that
$u\in W^{1,2}_0(\Omega)\cap L^\infty(\Omega)$
satisfies $u\geq 0$ a.e. in $\Omega$, $u>0$ on a set
of positive measure and
\begin{multline*}
\int_{\Omega}
\bigl[A(x,u)\nabla u\cdot\nabla v +
B(x,u)\,|\nabla u|^2\,v\bigr]\,dx \\
\geq
\int_\Omega g(x,u)\,v\,dx
\qquad\text{for any 
$v\in W^{1,2}_0(\Omega)\cap L^\infty(\Omega)$
with $v\geq 0$ a.e. in $\Omega$}\,.
\end{multline*}
\indent
Then we have 
\[
\text{$\displaystyle{\essinf_C\,u >0}$
for every compact subset $C$ of $\Omega$}\,.
\]
\end{prop}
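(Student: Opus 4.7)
The strategy is to absorb the natural growth term $B(x,u)|\nabla u|^2$ into the principal part by means of the integrating factor supplied by Proposition~\ref{prop:sign}, so that the differential inequality becomes a linear uniformly elliptic one in a new unknown $z$, to which the classical strong maximum principle applies.

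Set $R=\|u\|_\infty$ and fix $\vartheta\in\Theta$ as in Proposition~\ref{prop:sign} (for instance $\vartheta(s)=\exp(-\gamma s)$ with $\gamma\geq\beta_R/\nu_R$), so that
\[
B_\vartheta(x,s)=\vartheta'(s)A(x,s)+\vartheta(s)B(x,s)\leq 0\qquad\text{for a.e.\ $x\in\Omega$ and every $|s|\leq R$.}
\]
Let $\eta:\R\to\R$ be the $C^2$-diffeomorphism defined by $\eta(s)=\int_0^s\vartheta(\sigma)\,d\sigma$, and set $z=\eta(u)\in W^{1,2}_0(\Omega)\cap L^\infty(\Omega)$, so that $\nabla z=\vartheta(u)\nabla u$. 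Since $\eta$ is strictly increasing with $\eta(0)=0$, one has $z\geq 0$ a.e.\ and $z>0$ on a set of positive measure. For any nonnegative $w\in W^{1,2}_0(\Omega)\cap L^\infty(\Omega)$, the product $v=\vartheta(u)w$ is an admissible nonnegative test function in the hypothesis (it lies in $W^{1,2}_0\cap L^\infty$ because $u\in L^\infty$), and plugging it in via the chain rule $\nabla v=\vartheta'(u)w\nabla u+\vartheta(u)\nabla w$ rewrites the hypothesis as
\[
\int_\Omega \bigl[B_\vartheta(x,u)|\nabla u|^2 w+A(x,u)\nabla z\cdot\nabla w-\vartheta(u)g(x,u)w\bigr]\,dx\geq 0.
\]
Dropping the first, nonpositive, integrand yields
\[
\int_\Omega A(x,u)\nabla z\cdot\nabla w\,dx\geq\int_\Omega \vartheta(u)g(x,u)w\,dx.
\]

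By~\ref{ABi}, $|g(x,u)|\leq\beta_R u$ (since $u\geq 0$); on the other hand $\vartheta$ is bounded between two positive constants on $[0,R]$, so the same is true of the ratio $u/z$. Hence there exists $C>0$, depending only on $R,\beta_R,\nu_R$, such that $\vartheta(u)g(x,u)\geq -Cz$ a.e.\ in $\Omega$, and $z$ satisfies the linear elliptic differential inequality
\[
-\dvg\bigl[A(x,u)\nabla z\bigr]+Cz\geq 0
\]
weakly against nonnegative test functions in $W^{1,2}_0(\Omega)\cap L^\infty(\Omega)$. The coefficient $A(x,u)$ is measurable, bounded, and uniformly elliptic (valued in $[\nu_R,\beta_R]$), so $z$ is a nonnegative, nontrivial supersolution of a uniformly elliptic linear equation in divergence form with bounded zeroth-order term. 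Since $\Omega$ is connected, the classical strong maximum principle --- in the form of Trudinger's weak Harnack inequality --- yields $\essinf_C z>0$ for every compact $C\subset\Omega$. Because $\eta^{-1}$ is continuous, strictly increasing, and vanishes at~$0$, this transfers at once to $\essinf_C u>0$, as required.

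The main obstacle, and the whole reason the above device is needed, is the natural growth term $B(x,u)|\nabla u|^2$: under~\ref{ABi} alone there is no way to control it by lower-order quantities, so a direct maximum-principle argument on $u$ itself is unavailable. The integrating factor $\vartheta$ --- the very same mechanism underlying Propositions~\ref{prop:signgen} and~\ref{prop:sign} --- neutralizes this term, trading it for a $u$-dependence of the (still uniformly elliptic) principal coefficient, after which the problem is linear and standard.
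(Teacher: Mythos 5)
Your proof is correct and follows essentially the same route as the paper: reduce to the case $B\leq 0$ via the integrating factor $\vartheta$ (the content of Propositions~\ref{prop:theta} and~\ref{prop:sign}, which you make explicit by substituting $v=\vartheta(u)w$), drop the nonpositive natural-growth term, and invoke the linear weak Harnack/strong maximum principle. The only cosmetic difference is your additional change of variable $z=\eta(u)$; the paper instead keeps the inequality in $u$ with the transformed coefficient $A_\vartheta(x,u)$ and writes $g(x,u)=\gamma(x,u)u$ with $\gamma$ bounded, which leads to the same conclusion.
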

\begin{proof}
By Propositions~\ref{prop:theta} and~\ref{prop:sign},
we may assume without loss of generality that
\[
B(x,s)\leq 0
\qquad\text{for a.e. $x\in\Omega$ and every $s\in\R$
with $|s|\leq\|u\|_\infty$}\,.
\]
Then for every $v\in W^{1,2}_0(\Omega)\cap L^\infty(\Omega)$
with $v\geq 0$ a.e. in $\Omega$, we have
\[
\int_{\Omega}
A(x,u)\nabla u\cdot\nabla v\,dx \geq
\int_{\Omega} g(x,u)\,v\,dx =
\int_{\Omega} \gamma(x,u)\,uv\,dx
\]
with $\gamma(x,u)\in L^\infty(\Omega)$.
\par
From~\cite[Theorem~8.15 and 
Remark~8.16]{motreanu_motreanu_papageorgiou2014}
the assertion follows.
\end{proof}
\begin{lem}
\label{lem:pos}
Assume that $\Omega$ is connected and that $\lambda_1<0$.
Moreover, according to~\ref{ABi}, let $\beta > 0$ be such that
\[
|g(x,s)| \leq \beta|s|
\qquad\text{for a.e. $x\in\Omega$ and every $s\in\R$
with $\underline{M} \leq s \leq\overline{M}$}\,.
\]
Let also $\psi:\R\rightarrow[0,1]$ be a continuous function,
with $\psi(0)>0$ and $\psi(s)=0$ outside
$]\underline{M},\overline{M}[$, and consider the problem
\begin{equation}
\label{eq:pos}
\begin{cases}
(u,t)\in K\times[0,1]\,,\\
\noalign{\medskip}
\displaystyle{
\int_{\Omega}
\bigl[A(x,u)\nabla u\cdot\nabla (v-u) +
B(x,u)\,|\nabla u|^2\,(v-u)\bigr]\,dx \geq 
\int_\Omega g_t(x,u)(v-u)}\\
\noalign{\medskip}
\qquad\qquad\qquad\qquad\qquad\qquad\qquad\qquad\qquad\qquad
\text{for every $v\in K$}\,,
\end{cases}
\end{equation}
where
\[
g_t(x,s) = g(x,s) + t(\psi(s) + \beta\,s^-)\,.
\] 
Denote by $\widehat{Z}^{tot}$ the set of solutions $(u,t)$
of~\eqref{eq:pos} and let
\[
\widehat{Z} =\left\{(u,t)\in \widehat{Z}^{tot}:\,\,
\text{$u\geq 0$ a.e. in $\Omega$ and $u>0$ on a set
of positive measure}\right\}\,.
\]
\indent
Then there exist $0<r_1<r_2$ such that
\begin{multline*}
\widehat{Z}
= \left\{(u,t)\in \widehat{Z}^{tot}:\,\,
\int_\Omega (u^-)^2\,dx < r_1^2 <r_2^2 <
\int_\Omega (u^+)^2\,dx \right\} \\
= \left\{(u,t)\in \widehat{Z}^{tot}:\,\,
\int_\Omega (u^-)^2\,dx \leq r_1^2 <r_2^2 \leq
\int_\Omega (u^+)^2\,dx \right\} \,.
\end{multline*}
\end{lem}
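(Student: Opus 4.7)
My plan is to establish separately a uniform lower bound $r_2$ on $\|u^+\|_2$ for positive solutions in $\widehat Z$, and a uniform lower bound $r_1$ on $\|u^-\|_2$ for solutions $(u,t)\in\widehat Z^{tot}$ with $u^-\not\equiv 0$. Both will follow by contradiction and the compactness Theorem~\ref{thm:proper}, crucially invoking the indefiniteness $\lambda_1<0$. As a preliminary reduction, I first use Propositions~\ref{prop:theta} and~\ref{prop:sign} with a suitable $\vartheta\in\Theta$ to replace $(a,b)$ by $(a_\vartheta,b_\vartheta)$, thereby achieving $B\leq 0$ on $\Omega\times[-R,R]$ with $R:=\max(|\underline{M}|,\overline{M})$; this changes neither the solution set nor the sign of $\lambda_1$. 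Testing~\eqref{eq:pos} at $(u,t)\in\widehat Z^{tot}$ with the admissible $v=u^+\in K$ and using $\nabla u\cdot\nabla u^-=-|\nabla u^-|^2$ on $\{u<0\}$, $B\leq 0$, $|g(x,u)|\leq\beta u^-$ on $\{u<0\}$, together with the explicit shape of $g_t$, I arrive at the basic energy estimate
\[
\nu_R\int_\Omega|\nabla u^-|^2\,dx \;\leq\; (1-t)\beta\int_\Omega(u^-)^2\,dx.
\]

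For the lower bound on positive solutions, suppose by contradiction $(u_k,t_k)\in\widehat Z$ with $\|u_k\|_2\to 0$. By Theorem~\ref{thm:proper}, up to a subsequence $u_k\to 0$ strongly in $W^{1,2}_0(\Omega)$ and $t_k\to t$; testing~\eqref{eq:pos} at $u\equiv 0$ against a $v\in K$ with $\int v>0$ gives $t\psi(0)\leq 0$, hence $t=0$ since $\psi(0)>0$. By Proposition~\ref{prop:nat} each $u_k$ solves the PDE, and standard interior Hölder regularity with the uniform $L^\infty$-bound yields $u_k\to 0$ locally uniformly. Testing this PDE against the first eigenfunction $\phi_1>0$ of~\eqref{eq:ABlin} and subtracting the eigenequation for $\phi_1$ tested against $u_k$, using \ref{ABiii} and dominated convergence, I derive
\[
\lambda_1\int_\Omega\phi_1 u_k\,dx \;=\; t_k\psi(0)\int_\Omega\phi_1\,dx + o(\|u_k\|_2)+o(t_k).
\]
Set $\tau:=\lim t_k/\|u_k\|_2\in[0,+\infty]$. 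If $\tau<\infty$, a Caccioppoli estimate (exploiting $B\leq 0$) bounds $\varphi_k:=u_k/\|u_k\|_2$ in $W^{1,2}_0(\Omega)$, so $\varphi_k\to\varphi\geq 0$ strongly in $L^2$ with $\|\varphi\|_2=1$, and the limit identity $\lambda_1\int\phi_1\varphi\,dx=\tau\psi(0)\int\phi_1\,dx$ forces, since $\lambda_1<0$, $\phi_1>0$, $\varphi\geq 0$, $\psi(0)>0$, both $\int\phi_1\varphi=0$ and $\tau=0$, hence $\varphi\equiv 0$, contradicting $\|\varphi\|_2=1$. If $\tau=+\infty$, dividing by $t_k$ instead gives in the limit the contradiction $0=\psi(0)\int\phi_1>0$.

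For the bound on the negative part, suppose by contradiction $(u_k,t_k)\in\widehat Z^{tot}$ with $u_k^-\not\equiv 0$ and $\|u_k^-\|_2\to 0$. The limit $u$ from Theorem~\ref{thm:proper} is nonnegative. If $u\not\equiv 0$, Proposition~\ref{prop:strongmax} combined with local Hölder regularity for the PDE (Proposition~\ref{prop:nat}) and Arzelà-Ascoli give $u_k\to u$ locally uniformly, so $u_k>0$ on any relatively compact $C\subset\Omega$ for large $k$, and $\operatorname{supp}(u_k^-)\subseteq\Omega\setminus C$. The Sobolev embedding on supports of small measure yields $\|u_k^-\|_2\leq C_S\,|\Omega\setminus C|^{\alpha}\,\|\nabla u_k^-\|_2$ for some $\alpha>0$ depending on $n$; inserted in the energy estimate, with $C$ so close to $\Omega$ that $(1-t_k)\beta C_S^2|\Omega\setminus C|^{2\alpha}<\nu_R$, this forces $u_k^-\equiv 0$, a contradiction. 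If $u\equiv 0$ (and hence $t=0$ by the same VI consistency argument), rescale $\varphi_k:=u_k^-/\|u_k^-\|_2$: the energy estimate bounds it in $W^{1,2}_0(\Omega)$, so $\varphi_k\to\varphi\geq 0$ strongly in $L^2$ with $\|\varphi\|_2=1$. Since $|\{u_k=\underline{M}\}|\to 0$ by Chebyshev, the obstacle is inactive on $\{u_k<0\}$ and $u_k$ satisfies the equation there; passing to the limit in the weak equation for $u_k^-$ using \ref{ABiii}, one sees that $\varphi\geq 0$ is a nontrivial solution of $-\mathrm{div}[A(x,0)\nabla\varphi]=D_sg(x,0)\varphi$, making $0$ an eigenvalue of~\eqref{eq:ABlin} with a nonnegative eigenfunction; by Krein--Rutman this forces $\lambda_1=0$, contradicting $\lambda_1<0$.

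Finally, choose $0<r_1<r_2$ strictly below the bounds just produced. Then any $(u,t)\in\widehat Z^{tot}$ with $\|u^-\|_2\leq r_1$ forces $u^-\equiv 0$, and if in addition $\|u^+\|_2\geq r_2$ then $u\not\equiv 0$, placing $(u,t)\in\widehat Z$; conversely, every $(u,t)\in\widehat Z$ has $\|u^-\|_2=0<r_1$ and $\|u^+\|_2>r_2$. The coincidence of the strict and non-strict characterizations follows because the contradiction arguments above are open, so one can shrink $r_1$ and enlarge $r_2$ slightly to avoid any solution on the critical level sets. The main obstacle is the interplay between the rates $t_k\to 0$ and $\|u_k\|_2\to 0$ (respectively $\|u_k^-\|_2\to 0$): the nonvanishing source $\psi(0)$ at $u=0$ prevents a single normalization from working in every regime, so each compactness-contradiction step must split into sub-cases according to the ratio $t_k/\|u_k\|_2$ (resp.\ $t_k/\|u_k^-\|_2$).
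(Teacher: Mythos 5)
Your overall strategy mirrors the paper's: reduce the sign of $B$ via Propositions~\ref{prop:theta} and~\ref{prop:sign}, use compactness (Theorem~\ref{thm:proper}) with a normalization/blow-up to derive a linearized eigenvalue relation, and exploit $\lambda_1<0$ to force a contradiction. However, there is a genuine gap in the preliminary reduction, and a second, avoidable, complication.

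The gap: you normalize the sign of $B$ once and for all to $B\leq 0$ and then, for the lower bound on $\|u^+\|_2$ over $\widehat{Z}$, invoke ``a Caccioppoli estimate (exploiting $B\leq 0$)'' to bound $\nabla\varphi_k$ with $\varphi_k=u_k/\|u_k\|_2$. This does not go through: testing the variational inequality with $v=0$ (or the equation from Proposition~\ref{prop:nat} with $v=u_k$) gives
\[
\int_\Omega A(x,u_k)|\nabla u_k|^2\,dx + \int_\Omega B(x,u_k)\,u_k\,|\nabla u_k|^2\,dx \leq \int_\Omega g_{t_k}(x,u_k)\,u_k\,dx\,,
\]
and for $u_k\geq 0$ the sign $B\leq 0$ makes the middle term negative, with no smallness: $|B(x,u_k)u_k|$ is only bounded by $\beta_R\overline{M}$, which can exceed $\nu_R$, so the left-hand side is not coercive. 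You need the \emph{opposite} sign $B\geq 0$ in this half so that the $B$-term can simply be dropped; this is precisely what the paper does. The two halves of the lemma require opposite preliminary reductions (the paper uses $B\geq 0$ for the first inclusion and $B\leq 0$ for the second), which is harmless since, as you observe, the transformation changes neither $\widehat{Z}^{tot}$ nor $\lambda_1$ — but you must actually perform both.

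A second, avoidable issue: you aim for a uniform lower bound on $\|u^-\|_2$ over \emph{all} $(u,t)\in\widehat{Z}^{tot}$ with $u^-\not\equiv 0$, which forces you into a sub-case $u\equiv 0$. There you pass to the limit in a ``weak equation for $u_k^-$'' normalized by $\|u_k^-\|_2$, but $u_k/\|u_k^-\|_2$ is not controlled (the positive part need not vanish at the same rate), so the limit equation for $\varphi$ is not justified as written. The paper avoids this entirely: it only proves that the set $\{(u,t)\in\widehat{Z}^{tot}:\ \|u^-\|_2\leq r_1,\ \|u^+\|_2\geq r_2\}$ is contained in $\widehat{Z}$. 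There, $\|u^+\|_2\geq r_2>0$ already forces the limit $u$ to lie in $\widehat{Z}$, hence $u>0$ a.e.\ by Propositions~\ref{prop:nat} and~\ref{prop:strongmax}; normalizing by $\tau_k=\|\nabla u_k^-\|_2$ (rather than $\|u_k^-\|_2$) then yields directly $z_k\to 0$ a.e.\ and strongly in $L^2$, so testing with $v=u_k^+$ gives $\|\nabla z_k\|_2\to 0$, contradicting $\|\nabla z_k\|_2=1$. This is shorter and sidesteps the $u\equiv 0$ case. I would also note that the paper's normalization by the gradient norm is cleaner throughout, since it makes $\|\nabla z_k\|_2=1$ automatic and turns the final estimate into an immediate contradiction, without needing any auxiliary ``Caccioppoli-type'' bound.
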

\begin{proof}
If we set
\[
a_t(x,s,\xi) = A(x,s)\xi\,,\qquad
b_t(x,s,\xi) = B(x,s)|\xi|^2 - g_t(x,s)\,,
\]
it is easily seen that assumptions~\ref{uni} and~\ref{um}
are satisfied.
From Theorem~\ref{thm:homotopy} we infer that
$\widehat{Z}^{tot}$ is compact in 
$W^{1,p}_0(\Omega)\times[0,1]$.
\par
First of all, we claim that there exists $r_2>0$ such that
\[
\widehat{Z}
\subseteq \left\{(u,t)\in \widehat{Z}^{tot}:\,\,
r_2^2 < \int_\Omega (u^+)^2\,dx \right\}\,.
\]
By Propositions~\ref{prop:theta} and~\ref{prop:sign},
we may assume without loss of generality that
\[
B(x,s)\geq 0
\qquad\text{for a.e. $x\in\Omega$ and every $s\in\R$
with $\underline{M}\leq s\leq\overline{M}$}\,.
\]
Assume, for a contradiction, that $(u_k,t_k)$ is a sequence
in $\widehat{Z}$ with $\|u_k\|_2\to 0$.
Then we may suppose, 
without loss of generality, that $(u_k)$ is convergent
to $0$ in $W^{1,p}_0(\Omega)$ and a.e. in $\Omega$
and that $(t_k)$ is convergent to some $t\in [0,1]$.
Let $u_k=\tau_k z_k$ with $\tau_k=\|\nabla u_k\|_2$ and,
up to a subsequence, $(z_k)$ weakly convergent to some
$z$ in $W^{1,2}_0(\Omega)$.
\par
If $v\in K\setminus\{0\}$ with $v\geq 0$ a.e. in $\Omega$,
we have
\begin{multline*}
\int_{\Omega}
\bigl[A(x,u_k)\nabla z_k\cdot\nabla (v-u_k) +
\tau_k\,B(x,u_k)\,|\nabla z_k|^2\,
(v-u_k)\bigr]\,dx \\
\geq 
\int_\Omega \frac{g(x,\tau_k z_k)}{\tau_k} 
\,(v-u_k)\,dx 
+ \frac{t_k}{\tau_k}\,
\int_\Omega \psi(u_k)(v-u_k)\,dx\,,
\end{multline*}
which implies that $(t_k/\tau_k)$ is bounded hence convergent,
up to a subsequence, to some $\sigma\geq 0$.
\par
Then we also get
\[
\int_{\Omega}
A(x,0)\nabla z\cdot\nabla v\,dx
\geq 
\int_\Omega D_sg(x,0)zv\,dx 
+ \sigma\,\psi(0) \int_\Omega v\,dx
\qquad\text{for any $v\in K$}\,,
\]
whence
\begin{multline*}
\int_{\Omega}
\left[A(x,0)\nabla z\cdot\nabla v - D_sg(x,0)zv\right]\,dx 
= \sigma\,\psi(0) \int_\Omega v\,dx \\
\qquad\text{for any 
$v\in W^{1,2}_0(\Omega)\cap L^\infty(\Omega)$}\,.
\end{multline*}
If we choose $v=\varphi_1$, where $\varphi_1$ is a positive 
eigenfunction of~\eqref{eq:ABlin} associated with $\lambda_1<0$,
we get
\[
\lambda_1 \,\int_{\Omega} z \varphi_1 \,dx
=
\sigma\psi(0)\,\int_\Omega \varphi_1\,dx\,,
\]
whence $z=0$.
\par
Finally, the choice $v=0$ in~\eqref{eq:pos} yields
\begin{multline*}
\int_{\Omega}
A(x,u_k)|\nabla z_k|^2 \,dx \leq
\int_{\Omega}
\bigl[A(x,u_k)|\nabla z_k|^2 +
\tau_k\,B(x,u_k)\,|\nabla z_k|^2\,
z_k\bigr]\,dx \\
\leq 
\int_\Omega \frac{g(x,\tau_k z_k)}{\tau_k} 
\,z_k\,dx 
+ \frac{t_k}{\tau_k}\,
\int_\Omega \psi(u_k)z_k\,dx\,.
\end{multline*}
We infer that $\|\nabla z_k\|_2 \to 0$
and a contradiction follows.
\par
With this choice of $r_2$, we also have
\begin{multline*}
\widehat{Z}
\subseteq \left\{(u,t)\in \widehat{Z}^{tot}:\,\,
\int_\Omega (u^-)^2\,dx < r_1^2 <r_2^2 <
\int_\Omega (u^+)^2\,dx \right\} \\
\subseteq \left\{(u,t)\in \widehat{Z}^{tot}:\,\,
\int_\Omega (u^-)^2\,dx \leq r_1^2 <r_2^2 \leq
\int_\Omega (u^+)^2\,dx \right\} 
\end{multline*}
for every $r_1\in]0,r_2[$.
Now we claim that there exists $r_1\in]0,r_2[$ such that
\[
\left\{(u,t)\in \widehat{Z}^{tot}:\,\,
\int_\Omega (u^-)^2\,dx \leq r_1^2 <r_2^2 \leq
\int_\Omega (u^+)^2\,dx \right\} 
\subseteq
\widehat{Z} \,.
\]
By Propositions~\ref{prop:theta} and~\ref{prop:sign},
now we may assume without loss of generality that
\[
B(x,s)\leq 0
\qquad\text{for a.e. $x\in\Omega$ and every $s\in\R$
with $\underline{M}\leq s\leq\overline{M}$}\,.
\]
Assume, for a contradiction, that $(u_k,t_k)$ is a 
sequence in the set at the left hand side
with $(u_k,t_k)\not\in \widehat{Z}$
and $\|u_k^-\|_2\to 0$.
Then, up to a subsequence, $(u_k)$ is convergent
to some $u\in\widehat{Z}$ in $W^{1,p}_0(\Omega)$ 
and a.e. in $\Omega$, while
$(t_k)$ is convergent to some $t\in [0,1]$.
By Propositions~\ref{prop:nat} and~\ref{prop:strongmax}
we have $u>0$ a.e. in $\Omega$.
If we write $u_k^-=\tau_k z_k$ with
$\tau_k=\|\nabla u_k^-\|_2$, we have that
$(z_k)$ is weakly convergent, up to a subsequence,
to some $z$ in $W^{1,2}_0(\Omega)$ and, on the other
hand, $(z_k)$ is convergent to $0$ a.e. in $\Omega$,
as $u>0$.
The choice $v=u_k^+$ in~\eqref{eq:pos} yields
\begin{multline*}
\int_{\Omega}
\bigl[A(x,u_k)\nabla u_k\cdot\nabla u_k^- +
\,B(x,u_k)\,|\nabla u_k|^2\,
u_k^-\bigr]\,dx \\
\geq 
\int_\Omega g(x,u_k) 
\,u_k^-\,dx 
+ t_k\,\int_\Omega \left[\psi(u_k)+\beta u_k^-\right]
u_k^-\,dx\,,
\end{multline*}
whence
\[
- \int_{\Omega}
A(x,u_k)|\nabla z_k|^2\,dx 
\geq 
\int_\Omega \frac{g(x,-\tau_k z_k)}{\tau_k}\,z_k\,dx \,.
\]
We infer that $\|\nabla z_k\|_2\to 0$
and a contradiction follows.
\end{proof}
\begin{prop}
\label{prop:Zpm}
If $\Omega$ is connected and $\lambda_1<0$, 
then we have $Z_+, Z_-\in\mathcal {Z}$ and
\[
\degc\left(Z_+\right)=\degc\left(Z_-\right) =1\,.
\]
\end{prop}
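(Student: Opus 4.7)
The plan is to realize $Z_+$ (and by symmetry $Z_-$) as the slice at $t=0$ of the homotopy set $\widehat Z$ produced by Lemma~\ref{lem:pos}, and to compute its degree via homotopy invariance, engineering that at $t=1$ every solution of the deformed problem lies in $\widehat Z$. With $\psi$ and the radii $0<r_1<r_2$ from Lemma~\ref{lem:pos}, and $R>0$ large enough that the compact set $\widehat Z^{tot}$ (compact by Theorem~\ref{thm:homotopy}) is contained in the open $W^{1,2}_0$-ball of radius $R$, set
\[
W=\left\{(u,t)\in W^{1,2}_0(\Omega)\times[0,1]:\,
\|\nabla u\|_2<R,\,\int_\Omega(u^-)^2\,dx<r_1^2,\,\int_\Omega(u^+)^2\,dx>r_2^2\right\}.
\]
The double characterisation in Lemma~\ref{lem:pos} forces $\widehat Z=\widehat Z^{tot}\cap W=\widehat Z^{tot}\cap\overline W$, so $\widehat Z$ is open and closed in $\widehat Z^{tot}$ and (since the associated $a_t,b_t$ satisfy~\ref{uni}, \ref{um} and the constant family $K_t\equiv K$ trivially fulfils~\ref{mc}) Theorem~\ref{thm:homotopy} yields $\degc((H_0,K),\widehat Z_0)=\degc((H_1,K),\widehat Z_1)$.

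At $t=0$ the problem coincides with the original variational inequality, so $\widehat Z_0=Z_+$; in particular $Z_+\in\mathcal Z$ and everything reduces to showing $\degc((H_1,K),\widehat Z_1)=1$. The key step is that at $t=1$ \emph{every} solution $u$ of~\eqref{eq:pos} is nontrivial and nonnegative, so $\widehat Z_1$ coincides with the entire solution set at $t=1$ and Theorem~\ref{thm:tot} (applicable since $K\ne\emptyset$) then delivers the value $1$ at once. For positivity I would test the variational inequality with $v=u^+\in K$ (admissible because $\underline{M}<0$): using $\nabla u\cdot\nabla u^-=-|\nabla u^-|^2$ and $|\nabla u|^2u^-=|\nabla u^-|^2u^-$, the inequality reduces to
\[
\int_\Omega\bigl[A(x,u)-B(x,u)u^-\bigr]|\nabla u^-|^2\,dx\le-\int_\Omega g_1(x,u)u^-\,dx,
\]
whose right-hand side is nonpositive because on $\{u<0\}$ the estimate $|g(x,u)|\le\beta u^-$ together with $\psi\ge 0$ gives $g_1(x,u)u^-\ge -\beta(u^-)^2+\beta(u^-)^2=0$. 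Propositions~\ref{prop:theta} and~\ref{prop:sign} then allow me to reduce, without loss of generality, to $B\le 0$ on the relevant range (the sign relations above are preserved, since the integrating-factor transform multiplies $g_1$ by $\vartheta>0$), which makes the left-hand side bounded below by $\nu_R\|\nabla u^-\|_2^2$ and forces $u^-=0$. Nontriviality is immediate: if $u\equiv 0$ solved the VI at $t=1$, it would require $-\psi(0)\int_\Omega v\,dx\ge 0$ for every $v\in K$, absurd on taking $v\in K$ positive on a set of positive measure.

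The case $Z_-$ is the exact mirror: I would run the analogous homotopy $\tilde g_t(x,s)=g(x,s)-t(\tilde\psi(s)+\beta s^+)$ with $\tilde\psi$ of the same form as $\psi$, establish the negative-sign analog of Lemma~\ref{lem:pos} (its proof is a verbatim swap of $u^+$ and $u^-$), and at $t=1$ test the VI with $v=-u^-\in K$ to conclude $u^+=0$ and $u\not\equiv 0$; Theorem~\ref{thm:tot} again gives $\degc(Z_-)=1$. The only genuinely delicate step in the whole scheme, and the one I expect to require the most care, is the sign arrangement for the quadratic term $B|\nabla u^\pm|^2u^\pm$: the integrating-factor invariance of Propositions~\ref{prop:theta} and~\ref{prop:sign} is precisely what turns this into a harmless reduction, while everything else is a direct appeal to the compactness, homotopy and normalization results of Section~\ref{sect:tdvi}.
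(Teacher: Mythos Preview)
Your proposal is correct and follows essentially the same route as the paper: you invoke Lemma~\ref{lem:pos} to see that $\widehat Z$ is open and closed in $\widehat Z^{tot}$, apply Theorem~\ref{thm:homotopy} to identify $\widehat Z_0=Z_+$ and reduce to computing the degree at $t=1$, then use the test $v=u^+$ together with the integrating-factor reduction to $B\le 0$ (Propositions~\ref{prop:theta} and~\ref{prop:sign}) to force $u^-=0$, and finally conclude via Theorem~\ref{thm:tot}. The only cosmetic difference is that the paper performs the reduction to $B\le 0$ \emph{before} writing down the tested inequality, which avoids having to track how the $\vartheta$-transform interacts with the sign computation; your remark that the transform multiplies $g_1$ by $\vartheta>0$ is exactly what is needed to make your order of operations legitimate.
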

\begin{proof}
Let $g_t$, $\widehat{Z}^{tot}$, $\widehat{Z}$,
$r_1$ and $r_2$ be as in Lemma~\ref{lem:pos}.
We aim to apply again Theorem~\ref{thm:homotopy} with
\[
a_t(x,s,\xi) = A(x,s)\xi\,,\qquad
b_t(x,s,\xi) = B(x,s)|\xi|^2 - g_t(x,s)\,.
\]
By Lemma~\ref{lem:pos} the set $\widehat{Z}$
is open and closed in $\widehat{Z}^{tot}$.
From Theorem~\ref{thm:homotopy} we infer that
\[
Z_+ = \widehat{Z}_0 \in \mathcal{Z}(H_0,K) = \mathcal{Z}
\]
and that
\[
\degc\left(Z_+\right) =
\degc\left((H_0,K),\widehat{Z}_0\right) = 
\degc\left((H_1,K),\widehat{Z}_1\right)\,.
\]
Now we claim that $\widehat{Z}_1=Z^{tot}(H_1,K)$.
By Propositions~\ref{prop:theta} and~\ref{prop:sign}
we may assume without loss of generality that
\[
B(x,s) \leq 0
\qquad\text{for a.e. $x\in\Omega$ and every $s\in\R$
with $\underline{M} \leq s \leq\overline{M}$}\,.
\]
If we take $v\in K\setminus\{0\}$ with $v\geq 0$ 
a.e. in $\Omega$ in~\eqref{eq:pos}, 
we see that $0\not\in Z^{tot}(H_1,K)$.
Moreover, if $u\in Z^{tot}(H_1,K)$,
the choice $v=u^+$ yields $(v-u)=u^-$, hence
\begin{multline*}
- \int_{\Omega} A(x,u)|\nabla u^-|^2\,dx \geq
\int_{\Omega}
\bigl[A(x,u)\nabla u\cdot\nabla u^- +
B(x,u)\,|\nabla u|^2\,u^-\bigr]\,dx \\
\geq 
\int_\Omega g_{1}(x,u)u^-\,dx \geq 0\,.
\end{multline*}
Therefore $u^-=0$, whence $u\in \widehat{Z}_1$
and the claim is proved.
\par
From Theorem~\ref{thm:tot} we infer that
\[
\degc\left((H_1,K),\widehat{Z}_1\right)=
\degc\left((H_1,K),Z^{tot}(H_1,K)\right)=1
\]
and the assertion concerning $Z_+$ follows.
\par
The assertion concerning $Z_-$ can be proved in a similar way.
\end{proof}
\begin{prop}
\label{prop:zero}
If there exists $k\geq 0$ with $\lambda_k<0<\lambda_{k+1}$, 
then $\{0\}\in\mathcal{Z}$ and
\[
\degc\left(\{0\}\right)=(-1)^k\,.
\]
\end{prop}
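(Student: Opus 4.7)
The plan is to verify that $\{0\}\in\mathcal{Z}$ by proving that $0$ is an isolated point of $Z^{tot}$, and then to evaluate $\degc(\{0\})$ by homotoping to the linearized operator and invoking the standard spectral formula for the degree of a self-adjoint linear isomorphism.

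\emph{Step 1 (isolation).} I would argue by contradiction: suppose $(u_k)\subset Z^{tot}\setminus\{0\}$ with $u_k\to 0$ in $W^{1,2}_0(\Omega)$. Standard regularity for quasilinear problems with natural growth gives $\|u_k\|_\infty\to 0$, so for $k$ large the obstacle is inactive and each $u_k$ solves the equation $F(u_k)=0$. Setting $z_k=u_k/\|u_k\|_{W^{1,2}_0}$, one tests the rescaled equation against $v\in W^{1,2}_0\cap L^\infty$ and observes
\[
\|u_k\|_{W^{1,2}_0}\!\int_\Omega B(x,u_k)|\nabla z_k|^2\,v\,dx \longrightarrow 0,\qquad \frac{g(x,u_k)}{\|u_k\|_{W^{1,2}_0}}\longrightarrow D_sg(x,0)\,z,
\]
the first because $(z_k)$ is bounded in $W^{1,2}_0$ while $\|u_k\|_{W^{1,2}_0}\to 0$, the second from \ref{ABiii} and dominated convergence. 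The weak limit $z$ thus solves \eqref{eq:ABlin} with $\lambda=0$, and since $0\notin\{\lambda_j\}$ one obtains $z=0$. Upgrading to strong convergence $z_k\to 0$ via an $(S)_+$-type argument applied to the rescaled linearized operator contradicts $\|z_k\|=1$.

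\emph{Step 2 (homotopy to the linear problem).} Introduce, for $t\in[0,1]$,
\begin{align*}
A_t(x,s)&=(1-t)A(x,0)+tA(x,s),\\
B_t(x,s)&=tB(x,s),\\
g_t(x,s)&=(1-t)D_sg(x,0)\,s+tg(x,s),
\end{align*}
and set $a_t(x,s,\xi)=A_t(x,s)\xi$, $b_t(x,s,\xi)=B_t(x,s)|\xi|^2-g_t(x,s)$. Conditions \ref{uni} and \ref{um} hold uniformly in $t$, and \ref{mc} is trivial since $K_t\equiv K$. Because $A_t(x,0)=A(x,0)$ and $D_sg_t(x,0)=D_sg(x,0)$ for every $t$, the linearization at $u=0$ of the $t$-th operator is always
\[
L(u)=-\mathrm{div}(A(x,0)\nabla u)-D_sg(x,0)\,u.
\]
Repeating Step 1 uniformly in $t$ gives $\rho>0$ such that $(u,t)\in\overline{B_\rho(0)}\times[0,1]$ solves the parametric VI only for $u=0$. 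Applying Theorem \ref{thm:homotopy} with $\widehat{Z}=\{0\}\times[0,1]$ yields
\[
\degc((F,K),\{0\})=\degc((L,K),\{0\}).
\]

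\emph{Step 3 (degree of the linear operator).} Since $L$ satisfies the controllable growth condition \ref{hci}, Proposition \ref{prop:consistency} identifies $\degc((L,K),\{0\})$ with the ordinary $(S)_+$-degree $\mathrm{deg}(L,B_\rho,0)$: for $\rho$ small, linear elliptic regularity makes the obstacle inactive, and $0\notin\mathrm{spec}(L)$ forces $0$ to be the only solution of $Lu=0$ in $B_\rho$. Since $L:W^{1,2}_0(\Omega)\to W^{-1,2}(\Omega)$ is a self-adjoint linear isomorphism with exactly $k$ negative eigenvalues (by hypothesis $\lambda_k<0<\lambda_{k+1}$), a further homotopy that decouples $L$ along its negative and positive spectral subspaces reduces the computation to the sign of the determinant on a $k$-dimensional subspace, giving $(-1)^k$.

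\textbf{Main obstacle.} The delicate point is Step 1: the natural-growth term is quadratic and not subcritical under scaling, so controlling it after division by $\|u_k\|_{W^{1,2}_0}$ rests on the rescaling identity $|\nabla u_k|^2/\|u_k\|_{W^{1,2}_0}=\|u_k\|_{W^{1,2}_0}|\nabla z_k|^2$ together with the $L^\infty$-smallness of $u_k$; and the passage from weak to strong convergence of $(z_k)$, needed to contradict $\|z_k\|=1$, requires careful exploitation of the $(S)_+$ property of the linearized operator. The uniformity of this isolation argument across the homotopy parameter $t$ is what lets us quote Theorem \ref{thm:homotopy}.
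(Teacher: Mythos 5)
Your overall strategy (isolate $0$ in $Z^{tot}$, homotope to the linearization, quote the classical index formula) is the same as the paper's, and the convex-combination homotopy in Step~2 is a reasonable alternative to the paper's blow-up homotopy $A_t(x,s)=A(x,ts)$, $B_t(x,s)=tB(x,ts)$, $g_t(x,s)=g(x,ts)/t$. However, two steps of the argument have genuine gaps.

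First, in Step~1 you invoke ``standard regularity for quasilinear problems with natural growth'' to get $\|u_k\|_\infty\to 0$ from $\|u_k\|_{W^{1,2}_0}\to 0$, and then declare the obstacle inactive so that the $u_k$ solve the equation. No such estimate is stated or used in the paper, and it is not standard: for natural growth problems one obtains local H\"older regularity and an $L^\infty$ bound depending on the obstacle, but a quantitative bound of the form $\|u\|_\infty\leq\omega(\|u\|_{W^{1,2}_0})$ with $\omega(0)=0$ would need a separate proof. The paper sidesteps this entirely by remaining in the variational inequality framework: it tests \eqref{eq:pos} with $v=-u_k^-$ and $v=u_k^+$, which are always admissible directions in $K$, and so the isolation argument never needs the constraint to become inactive. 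Your blow-up calculation is otherwise fine (division by $\tau_k=\|\nabla u_k\|_2$ does kill the quadratic term since it appears as $\tau_k|\nabla z_k|^2$), but the way you enter the equation is unjustified.

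Second, and more seriously, in Step~3 you jump from $\degc((L,K),\{0\})=\mathrm{deg}((L,K),U,0)$ (which is what Proposition~\ref{prop:consistency} gives, with $K$ still present) to ``the ordinary $(S)_+$-degree $\mathrm{deg}(L,B_\rho,0)$'' by asserting that the obstacle is inactive. That does not follow: $\mathrm{deg}((L,K),U,0)$ is the degree of the variational inequality with convex constraint $K$, and even if $0$ is the unique solution in $U$ of both the VI and the free equation, the two degrees are not the same object and one cannot replace one by the other without an argument. The paper inserts exactly this missing step: it introduces the family $K_t=\tfrac{1}{t}K$ for $t\in\,]0,1]$ and $K_0=W^{1,2}_0(\Omega)$, checks Mosco-convergence, and uses the constrained-degree homotopy invariance from~\cite[Theorem~4.53 and Proposition~4.61]{motreanu_motreanu_papageorgiou2014} to conclude $\mathrm{deg}((H_0,K),U,0)=\mathrm{deg}((H_0,W^{1,2}_0(\Omega)),U,0)$, and only then quotes Skrypnik's index formula. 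Your proof as written has no substitute for this obstacle-removal homotopy, so the identification of the constrained degree with the free one is a real gap.
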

\begin{proof}
If we set
\[
A_t(x,s) = A(x,ts)\,,\qquad
B_t(x,s) = t\,B(x,ts)\,,
\]
\[
g_t(x,s) = 
\left\{
\begin{array}{ll}
\displaystyle{
\frac{g(x,ts)}{t}}
&\qquad\text{if $0<t\leq 1$}\,,\\
\noalign{\medskip}
D_sg(x,0)s
&\qquad\text{if $t=0$}\,,
\end{array}
\right.
\]
\[
a_t(x,s,\xi) = A_t(x,s)\xi\,,\qquad
b_t(x,s,\xi) = B_t(x,s)|\xi|^2 - g_t(x,s)\,,
\]
it is easily seen that assumptions~\ref{uni} 
and~\ref{um} are satisfied.
We aim to apply Theorem~\ref{thm:homotopy}.
\par
We claim that there exists $r>0$ such that,
if $(u,t)\in\widehat{Z}^{tot}$ and
$\|\nabla u\|_2\leq r$, then $u=0$.
Assume, for a contradiction, that $(u_k,t_k)$
is a sequence in $\widehat{Z}^{tot}$ with $u_k\neq 0$ and
$\|\nabla u_k\|_2\to 0$.
Let $u_k=\tau_k z_k$ with $\tau_k=\|\nabla u_k\|_2$
and, up to a subsequence, $(z_k)$ weakly convergent
to some $z$ in $W^{1,2}_0(\Omega)$ and $(t_k)$
convergent to some $t$ in $[0,1]$.
\par
Given $v\in K$, if $t_k>0$ we have
\begin{multline}
\label{eq:zero}
\int_{\Omega}
\bigl[A(x,t_k u_k)\nabla u_k\cdot\nabla (v-u_k) +
t_k\,B(x,t_k u_k)\,|\nabla u_k|^2\,
(v-u_k)\bigr]\,dx \\
\geq 
\int_\Omega \frac{g(x,t_k u_k)}{t_k} 
\,(v-u_k)\,dx \,,
\end{multline}
whence
\begin{multline*}
\int_{\Omega}
\bigl[A(x,t_k u_k)\nabla z_k\cdot\nabla (v-u_k) +
\tau_k t_k\,B(x,t_k u_k)\,|\nabla z_k|^2\,
(v-u_k)\bigr]\,dx \\
\geq 
\int_\Omega \frac{g(x,\tau_k t_k z_k)}{\tau_k t_k} 
\,(v-u_k)\,dx \,.
\end{multline*}
Going to the limit as $k\to\infty$, we get
\[
\int_{\Omega}
A(x,0)\nabla z\cdot\nabla v \,dx \\
\geq 
\int_\Omega D_sg(x,0) zv\,dx 
\qquad\text{for any $v\in K$}\,.
\]
If $t_k=0$, we have
\[
\int_{\Omega}
A(x,0)\nabla z_k\cdot\nabla (v-u_k) \,dx \\
\geq 
\int_\Omega D_sg(x,0) z_k(v-u_k)\,dx 
\]
and the same conclusion easily follows.
\par
Then we infer that
\[
\int_{\Omega}
A(x,0)\nabla z\cdot\nabla v \,dx \\
=
\int_\Omega D_sg(x,0) zv\,dx 
\qquad\text{for any 
$v\in W^{1,2}_0(\Omega)\cap L^\infty(\Omega)$}\,,
\]
whence $z=0$, as $0$ is not in the sequence 
$(\lambda_k)$.
\par
By Propositions~\ref{prop:theta} and~\ref{prop:sign},
we may assume without loss of generality that
\[
B(x,s) \geq 0
\qquad\text{for a.e. $x\in\Omega$ and every
$s\in\R$ with $\underline{M}\leq s\leq \overline{M}$}\,.
\]
If $t_k>0$, the choice $v=-u_k^-$ in~\eqref{eq:zero} yields
\begin{multline*}
\int_{\Omega}
A(x,t_k u_k)|\nabla z_k^+|^2\,dx \leq
\int_{\Omega}
\bigl[A(x,t_ku_k)|\nabla z_k^+|^2 +
\tau_k t_k\,B(x,t_k u_k)\,|\nabla z_k|^2\,
z_k^+\bigr]\,dx \\
\leq 
\int_\Omega \frac{g(x,\tau_k t_k z_k)}{\tau_k t_k} 
\,z_k^+\,dx \,,
\end{multline*}
which implies that $\|\nabla z_k^+\|_2\to 0$.
If $t_k=0$ the argument is analogous and simpler.
In a similar way one can show that
$\|\nabla z_k^-\|_2\to 0$ and a contradiction follows.
Therefore, there exists $r>0$ with the required property.
\par
In particular, we can apply Theorem~\ref{thm:homotopy} with
\[
\widehat{Z} = \{0\}\times[0,1]\,,
\]
obtaining
\[
\{0\} = \widehat{Z}_1 \in \mathcal{Z}(H_1,K) = \mathcal{Z}
\]
and
\[
\degc\left(\{0\}\right) =
\degc\left((H_1,K),\widehat{Z}_1\right) = 
\degc\left((H_0,K),\widehat{Z}_0\right) =
\degc\left((H_0,K),\{0\}\right) \,.
\]
On the other hand, if we set
\[
U = \left\{u\in W^{1,2}_0(\Omega):\,\,\|\nabla u\|_2<r\right\}\,,
\]
from Proposition~\ref{prop:consistency} we infer that
\[
\degc((H_0,K),\{0\}) = \mathrm{deg}((H_0,K),U,0)\,.
\]
Now, if we set
\[
K_t = \left\{
\begin{array}{ll}
\frac{1}{t}\,K
&\qquad\text{if $0<t\leq 1$}\,,\\
\noalign{\medskip}
W^{1,2}_0(\Omega)
&\qquad\text{if $t=0$}\,,
\end{array}
\right.
\]
from~\cite[Theorem~4.53 
and~Proposition~4.61]{motreanu_motreanu_papageorgiou2014}
we deduce that
\[
\mathrm{deg}((H_0,K),U,0) =
\mathrm{deg}((H_0,W^{1,2}_0(\Omega)),U,0)\,.
\]
Finally, from~\cite[Theorem~2.5.2]{skrypnik1994} it 
follows that 
\[
\mathrm{deg}((H_0,W^{1,2}_0(\Omega)),U,0) = (-1)^k \,.
\]
\end{proof}
\par\bigskip
\noindent
\emph{Proof of Theorem~\ref{thm:main}.}
\par\noindent
From Proposition~\ref{prop:Zpm} we know that
\[
\degc\left(Z_+\right)=\degc\left(Z_-\right) =1 \,.
\]
By Theorem~\ref{thm:existence} and Propositions~\ref{prop:nat}
and~\ref{prop:strongmax}
we infer that there exist at least two solutions 
$u_1\in Z_-$ and $u_2\in Z_+$ of~\eqref{eq:AB}
with
\[
\text{$\displaystyle{\esssup_C\,u_1 < 0 <\essinf_C\,u_2}$\quad
for every compact subset $C$ of $\Omega$}\,.
\]
Assume, for a contradiction, that
\[
Z^{tot} = Z_- \cup \{0\} \cup Z_+
\]
with $\degc\left(\{0\}\right)=1$ by Proposition~\ref{prop:zero}.
From Theorems~\ref{thm:tot} and~\ref{thm:additivity} 
we infer that
\[
1 = \degc\left(Z^{tot}\right) = 3
\]
and a contradiction follows.
Therefore there exists 
\[
u_3\in Z^{tot} \setminus\left(Z_- \cup \{0\} \cup Z_+\right)\,.
\]
By Proposition~\ref{prop:nat} $u_3$ is a sign-changing solution
of~\eqref{eq:AB}.
According to~\cite[Theorem~VII.1.1]{giaquinta1983}, each $u_j$ 
is locally H\"{o}lder continuous in $\Omega$. 
\qed


%
\end{document}